\newcommand{\AAA}{{\cal A}}
\newcommand{\BB}{{\cal B}}
\newcommand{\DD}{{\cal D}}
\newcommand{\EE}{{\cal E}}
\newcommand{\FD}{{\cal FD}}
\newcommand{\FF}{{\cal F}}
\newcommand{\MM}{{\cal M}}
\newcommand{\NN}{{\cal N}}
\newcommand{\RR}{{\cal R}}
\newcommand{\TT}{{\cal T}}
\newcommand{\BD}{{\mathbb D}}
\newcommand{\BN}{{\mathbb{N}}}
\newcommand{\BR}{{\mathbb{R}}}
\newcommand{\BX}{{\mathbb{X}}}
\newcommand{\dyw}{\mbox{\rm div}}
\newcommand{\fch}{\mathbf{1}}
\newcommand{\tr}{\mbox{\rm Tr}}
\newtheorem{theorem}{\bf Theorem}[section]
\newtheorem{proposition}[theorem]{\bf Proposition}
\newtheorem{lemma}[theorem]{\bf Lemma}
\newtheorem{corollary}[theorem]{\bf Corollary}
\theoremstyle{definition}
\newtheorem*{definition}{Definition}
\newtheorem{remark}[theorem]{Remark}
\numberwithin{equation}{section}
\begin{document}

\title{Semilinear elliptic equations with measure data and
quasi-regular Dirichlet forms}
\author{Tomasz  Klimsiak  and Andrzej Rozkosz
\mbox{}\\[2mm]
{\small  Faculty of Mathematics and Computer Science,
Nicolaus Copernicus University}\\
{\small Chopina 12/18, 87-100 Toru\'n, Poland} \\
{\small E-mail addresses: tomas@mat.umk.pl, rozkosz@mat.umk.pl }}
\date{}
\maketitle

\footnote{2010 \emph{Mathematics Subject Classification}: Primary
35J61, 35R06; Secondary 60H30.}

\footnote{\emph{Key words and phrases}: semilinear elliptic
equation, measure data, Dirichlet form, backward sto\-chastic
differential equation.}

\begin{abstract}
We are mainly concerned with equations of the form
$-Lu=f(x,u)+\mu$, where $L$ is an operator associated with a
quasi-regular possibly nonsymmetric Dirichlet form, $f$ satisfies
the monotonicity condition and mild integrability conditions, and
$\mu$ is a bounded smooth measure. We prove general results on
existence, uniqueness and regularity of probabilistic solutions,
which are expressed in terms of solutions to backward stochastic
differential equations. Applications include equations with
nonsymmetric divergence form operators, with gradient
perturbations of some pseudodifferential operators and equations
with Ornstein-Uhlenbeck type operators in Hilbert spaces. We also
briefly discuss the existence and uniqueness of probabilistic
solutions in the case where $L$ corresponds to a lower bounded
semi-Dirichlet form.
\end{abstract}

\section{Introduction}

Let $E$ be a metrizable Lusin space, $m$ be a positive
$\sigma$-finite  measure on  $\BB(E)$ and let $(\EE,D(\EE))$ be a
quasi-regular possibly nonsymmetric Dirichlet form on $L^2(E;m)$.
In the present paper we study existence, uniqueness and regularity
of solutions of semilinear equations of the form
\begin{equation}
\label{eq1.1} -Lu=f(x,u)+\mu.
\end{equation}
Here $f:E\times\BR\rightarrow\BR$ is a measurable function, $\mu$
is a smooth signed measure on $\BB(E)$ with respect to the
capacity determined by $\EE$, and $L$ is the operator associated
with the form $\EE$, i.e.
\begin{equation}
\label{eq1.2} (-Lu,v)=\EE(u,v),\quad u\in D(L),\,v\in D(\EE),
\end{equation}
where $D(L)=\{u\in D(\EE):v\mapsto\EE(u,v)$ is continuous with
respect to $(\cdot,\cdot)^{1/2}$ on $D(\EE)$\}. We assume that $f$
satisfies the  monotonicity condition and mild integrability
conditions (even weaker than the integrability conditions
considered earlier in \cite{BBGGPV}). As for $\mu$ we assume that
it belongs to the class
\begin{align}
\label{eq1.03} \mathcal{R}&=\{\mbox{$\mu:|\mu|$ is smooth and
$\hat{G}\phi\cdot\mu\in\mathcal{M}_{0,b}$ for}\\
&\qquad\qquad \mbox{some $\phi\in L^1(E;m)$ such that $\phi>0$\,\,
$m$-a.e.}\}, \nonumber
\end{align}
where $|\mu|$ denotes the variation of $\mu$, $\mathcal{M}_{0,b}$
is a space of all finite smooth signed measures and $\hat{G}$ is
the co-potential operator associated with $\EE$.
In the important case where $\EE$ is transient the class
$\mathcal{R}$ includes $\mathcal{M}_{0,b}$ but it may happen that
$\mathcal{R}$ also includes some Radon measures of infinite total
variation.

The paper continuous research begun in our paper \cite{KR:JFA} in
which equations of the form (\ref{eq1.1}) with $L$ associated with
symmetric regular Dirichlet form are studied. The main motivation
is to extend results of \cite{KR:JFA} to encompass equations with
non-symmetric operators and equations in infinite dimensions.

As in \cite{KR:JFA}, by a solution of (\ref{eq1.1}) we mean a
quasi-continuous function $u:E\rightarrow\BR$ satisfying for
quasi-every $x\in E$ the nonlinear Feynman-Kac formula
\begin{equation}
\label{eq1.3} u(x)=E_{x}\Big(\int_{0}^{\zeta}f(X_{t},u(X_{t}))\,dt
+\int_{0}^{\zeta}dA^{\mu}_{t}\Big),
\end{equation}
where $\BX=(X,P_x)$ is a Markov process with life-time $\zeta$
associated with the form $\EE$, $E_x$ is the expectation with
respect to $P_x$ and $A^{\mu}$ is the additive functional of $\BX$
corresponding to $\mu$ in the Revuz sense. We show that in the
case where $\EE$ is transient the solution may be defined in
purely analytic terms resembling Stampacchia's definition of
solutions by duality. Namely, a solution of (\ref{eq1.1}) can be
defined equivalently as a quasi-continuous function $u$ such that
$|\langle\nu,u\rangle|=|\int_Eu\,d\nu|<\infty$ for every $\nu$ in
the set $\hat S^{(0)}_{00}$ of smooth measures of $0$-order energy
integral such that $\|\hat U\nu\|_{\infty}<\infty$ and
\[
\langle\nu,u\rangle=(f(\cdot,u),\hat U\nu)
+\langle\mu,\widetilde{\hat U\nu}\rangle,\quad \nu\in \hat
S^{(0)}_{00},
\]
where $(\cdot,\cdot)$ is the usual scalar product in $L^2(E;m)$,
$\hat U\nu$  is the 0-order co-potential of $\nu$ and
$\widetilde{\hat U\nu}$ denotes its quasi-continuous version. We
work exclusively with the probabilistic definition (\ref{eq1.3})
because in our opinion it is simpler and more natural than the
definition by duality, and what is even more important, it allows
us to use directly powerful methods of the theory of Dirichlet
forms and Markov processes.

The paper is organized as follows. In Section \ref{sec2} we
provide basic definitions and prove some auxiliary but important
results on smooth measures and their associated additive
functionals.

In Section \ref{sec3} we prove the existence and uniqueness of
probabilistic solutions of (\ref{eq1.1}), and then in Section
\ref{sec4} we study additional regularity properties of the
solutions. Our main result says that under mild assumptions on
$f$, we have $f(\cdot,u)\in L^1(E;m)$, and for every $k>0$ the
truncation $T_ku:= (-k)\vee u\wedge k$ belongs to the extended
Dirichlet space $\FF_e$ of $\EE$. Moreover,
\begin{equation}
\label{eq1.4} \EE(T_ku,T_ku)\le
k(\|f(\cdot,0)\|_{L^1(E;m)}+2\|\mu\|_{TV}),
\end{equation}
where $\|\mu\|_{TV}$  stands for the total variation norm of
$\mu$.

We are mainly concerned  with equations (\ref{eq1.1}) with $L$
corresponding to a Dirichlet form. It appears, however, that a
slight modification of the proof of the main existence result from
Section \ref{sec3} yields the existence of a probabilistic
solution of (\ref{eq1.1}) in the case where $L$ correspond to a
lower-bounded semi-Dirichlet form. Although for such forms general
regularity results similar to those of Section \ref{sec4} seems to
be impossible, we find it interesting that one can still define
probabilistic solutions and investigate them  by probabilistic
methods. Our results for semi-Dirichlet forms are given in Section
\ref{sec5}. For corresponding results for parabolic equations we
defer the reader to the recent paper \cite{K:JFA}.

In Section \ref{sec6}  some applications of general results of
Sections \ref{sec2}--\ref{sec5} are indicated. In the case of
Dirichlet forms we decided to describe in some detail four quite
different examples. In the first one we consider equation
(\ref{eq1.1}) with $L$ being a nonsymmetric divergence form
operator, that is, an operator associated with local non-symmetric
regular form. In the second example $L$ is a ``divergent free''
gradient perturbation of a symmetric nonlocal operator on $\BR^d$
whose model example is the $\alpha$-laplacian. In that case $L$
corresponds to a nonsymmetric nonlocal regular form.  Then we
consider a symmetric nonlocal operator on some finely open subset
$D\subset\BR^d$, which is associated with a symmetric but in
general nonregular form. In the last example we consider the
Ornstein-Uhlenbeck operator in Hilbert space, that is, an operator
associated with a local nonregular form. In each case we formulate
specific theorem on existence, uniqueness and regularity of
solutions. To our knowledge all these results are new. We also
briefly discuss the possibility of other applications of our
general results of Sections \ref{sec2}--\ref{sec4}. Finally, to
illustrate the results of Section \ref{sec5}, we consider two
examples of  equations with operators corresponding to
semi-Dirichlet forms. In the first example $L$ is a diffusion
operator with drift term, while in the second  it is the
fractional laplacian with variable exponent.

\section{Preliminaries} \label{sec2}

In Sections \ref{sec2}--\ref{sec4} we assume that $E$ is a
metrizable Lusin space, i.e. a metrizable space which is the image
of a Polish space under a continuous bijective mapping. We adjoin
an extra point $\partial$ to $E$ as an isolated point. We define
the Borel $\sigma$-algebra on $E_{\partial}:= E\cup\{\partial\}$
by putting
$\BB(E_{\partial})=\BB(E)\cup\{B\cup\{\partial\}:B\in\BB(E)\}$. We
make the convention that any function $f:E\rightarrow\bar\BR$ is
extended to $E_{\partial}$ by setting $f(\partial)=0$. Throughout
the paper $m$  is a $\sigma$-finite positive measure on $\BB(E)$.
We extend it to $\BB(E_{\partial})$ by setting
$m(\{\partial\})=0$.

\subsection{Quasi-regular Dirichlet forms and Markov processes}

We assume throughout that $(\EE,D(\EE))$ is a quasi-regular
Dirichlet form on $L^2(E;m)$ (see \cite{MOR,MR} for the
definitions).

We also assume that $(\EE,D(\EE))$ is a semi-Dirichlet form on
$L^2(E;m)$, i.e. $(\tilde\EE,D(\EE))$, where
$\tilde\EE(u,v)=\frac12(\EE(u,v)+\EE(v,u))$ for $u,v\in D(\EE)$,
is a symmetric closed form, $(\EE,D(\EE))$ satisfies the weak
sector condition and has the following contraction property: for
every $u\in D(\EE)$, $u^+\wedge1\in D(\EE)$ and
$\EE(u+u^+\wedge1,u-u^+\wedge1))\ge0$. If, in addition,
$\EE(u-u^+\wedge1,u+u^+\wedge1))\ge0$, then $(\EE,D(\EE))$ is
called a Dirichlet form. Recall that $(\EE,D(\EE))$ satisfies the
weak sector condition if there is $K>0$ such that
\[
|\EE_1(u,v)|\le K\EE_1(u,u)^{1/2}\EE_1(v,v)^{1/2},\quad u,v\in
D(\EE),
\]
where as usual, for $\alpha\ge0$ we set
$\EE_{\alpha}(u,v)=\EE(u,v)+\alpha(u,v)$ for $u,v\in D(\EE)$
($(\cdot,\cdot)$ stands for the usual inner product in
$L^2(E;m)$). Occasionally we will assume that $(\EE,D(\EE))$
satisfies the strong sector condition, i.e. there is $K>0$ such
that
\begin{equation}
\label{eq2.1} |\EE(u,v)|\le K\EE(u,u)^{1/2}\EE(v,v)^{1/2},\quad
u,v\in D(\EE).
\end{equation}

We will denote by $(G_{\alpha})_{\alpha>0}$, (resp. $(\hat
G_{\alpha})_{\alpha>0}$  the strongly continuous contraction
resolvent (resp. coresolvent) on $L^2(E;m)$ determined by
$(\EE,D(\EE))$ (see \cite[Theorem I.2.8]{MR}), and by
$(T_t)_{t>0}$ (resp. $(\hat T_t)_{t>0})$ the strongly continuous
contraction semigroup on $L^2(E;m)$ corresponding to
$(G_{\alpha})_{\alpha>0}$ (resp. $(\hat G_{\alpha})_{\alpha>0}$).
Note that $T_t, G_{\alpha}$ and $\hat T_t, \hat G_{\alpha}$ can be
extended to a semigroup and resolvent on $L^1(E;m)$ (see
\cite[Section 1.1]{O}).

We denote by $(L,D(L))$ the generator of $(G_{\alpha})_{\alpha>0}$
(and $(T_t)_{t>0}$). By \cite[Proposition I.2.16]{MR} it can be
characterized as the unique operator on $L^2(E;m)$ such that
(\ref{eq1.2}) is satisfied.

For a closed subset $F\subset E$ we set $D(\EE)_F=\{u\in
D(\EE):u=0\mbox{ $m$-a.e. on }E\setminus F\}$. Let us recall that
an increasing sequence $\{F_k\}_{k\ge1}$ of closed subsets of $E$
is called an $\EE$-nest if $\bigcup^{\infty}_{k=1}D(\EE)_{F_k}$ is
$\tilde\EE^{1/2}$-dense in $D(\EE)$. A subset $N\subset E$ is
called $\EE$-exceptional if $N\subset\bigcap^{\infty}_{k=1}F^c_k$
for some $\EE$-nest $\{F_k\}_{k\in\BN}$. In what follows we say
that a property of points in $E$ holds $\EE$-quasi-everywhere
($\EE$-q.e. for short) if it holds outside some $\EE$-exceptional
set. An $\EE$-q.e. defined function $u$ is called
$\EE$-quasi-continuous if there exists a nest $\{F_k\}_{k\in\BN}$
such that $f\in C(\{F_k\})$, where
\[
C(\{F_k\})=\{f:A\rightarrow\BR:\bigcup^{\infty}_{k=1}F_k\subset
A\subset E, f_{|F_k} \mbox{ is continuous for every }k\in\BN\}.
\]

The notions of $\EE$-nest and  $\EE$-exceptional set can be
characterized by certain capacities  relative to $(\EE,D(\EE))$.
To formulate this characterization, fix $\varphi\in L^2(E;m)$ such
that $0<\varphi\le 1$ $m$-a.e. and for open $U\subset E$  set
\[
\mbox{Cap}_{\varphi}(U)=\inf\{\EE_1(u,u):u\in D(\EE),u\ge \tilde
G_1\varphi\,\,m\mbox{-a.e. on }U\},
\]
where $\{\tilde G_{\alpha}\}$ is the resolvent associated with
$(\tilde\EE,D(\EE))$. For arbitrary $A\subset E$ we set
\begin{equation}
\label{eq2.4}
\mbox{Cap}_{\varphi}(A)=\inf\{\mbox{Cap}_{\varphi}(U):A\subset
U\subset E,\,U\mbox{ open}\}.
\end{equation}
Then by \cite[Theorem III.2.11]{MR} an increasing sequence
$\{F_k\}_{k\ge1}$ of closed subsets of $E$ is an $\EE$-nest iff
$\lim_{k\rightarrow\infty}\mbox{Cap}_{\varphi}(E\setminus F_k)=0$,
and secondly, $N\subset E$ is $\EE$-exceptional iff
$\mbox{Cap}_{\varphi}(N)=0$. Notice that from the above it follows
in particular that the capacities $\mbox{Cap}_{\varphi}$ defined
for different $\varphi\in L^2(E;m)$ such that $0<\varphi\le 1$
$m$-a.e. are equivalent to each other.

A Dirichlet form $(\EE,D(\EE))$ is called transient if there is an
$m$-a.e. strictly positive and
bounded $g\in L^1(E;m)$ 
such that
\begin{equation}
\label{eq2.3} \int_E|u|g\,dm\le \EE(u,u)^{1/2},\quad u\in D(\EE).
\end{equation}

Notice that transience of a Dirichlet form depends only on its
symmetric part. It is known (see \cite[Corollary 3.5.34]{J2}) that
$(\EE,D(\EE))$ is transient iff the corresponding sub-Markovian
semigroup $(T_t)_{t\ge0}$ is transient, i.e. for all $u\in
L^1(E;m)$ such that $u\ge0$ $m$-a.e.,
\[
\lim_{N\rightarrow\infty}\int^N_0T_tu\,dt<\infty, \quad
m\mbox{-a.e.}
\]

Let $(\EE,D(\EE))$ be a Dirichlet form. The extended Dirichlet
space $\FF_e$ associated with the symmetric Dirichlet form
$(\tilde\EE,D(\EE))$ is the family of measurable functions
$u:E\rightarrow\BR$ such that $|u|<\infty$ $m$-a.e. and there
exists an $\tilde\EE$-Cauchy sequence $\{u_n\}\subset D(\EE)$ such
that $u_n\rightarrow u$ $m$-a.e. The sequence $\{u_n\}$ is called
an approximating sequence for $u\in\FF_e$.

For a Dirichlet form $(\EE,D(\EE))$ and $u\in\FF_e$ we set
$\EE(u,u)=\lim_{n\rightarrow\infty}\EE(u_n,u_n)$, where $\{u_n\}$
is an approximating sequence for $u$ (see \cite[Theorem
1.5.2]{FOT}). If moreover $\EE$ satisfies the strong sector
condition then we may extend $\EE$ to $\FF_e$ by putting
$\EE(u,v)=\lim_{n\rightarrow\infty}\EE(u_n,v_n)$ with
approximating sequences $\{u_n\}$ and $\{v_n\}$ for $u\in\FF_e$
and $v\in\FF_e$, respectively (it is easily seen that $\EE(u,v)$
is independent of the choice of the approximating sequences).
Observe that this extension satisfies the strong sector condition,
i.e. (\ref{eq2.1}) holds true for all $u,v\in\FF_e$.

If $(\EE,D(\EE))$ is transient then by \cite[Lemma 1.5.5]{FOT},
$(\FF_e,\tilde\EE)$ is a Hilbert space. Also note that if
$(\EE,D(\EE))$ is a quasi-regular Dirichlet form (see
\cite{MOR,MR} for the definition) then by \cite[Proposition
IV.3.3]{MR} each  $u\in D(\EE)$ admits a quasi-continuous
$m$-version denoted by $\tilde u$, and that $\tilde u$ is
$\EE$-q.e. unique for every $u\in D(\EE)$. If moreover
$(\EE,D(\EE))$ is transient then the last statement holds true for
$D(\EE)$ replaced by $\FF_e$ (see \cite[Remark 2.2]{KR:JMAA}).

In the remainder of this section we assume that $(\EE,D(\EE))$ is
a quasi-regular Dirichlet form on $L^2(E;m)$.

By \cite[Theorem IV.3.5]{MR} there exists an $m$-tight special
standard Markov process
$\BX=(\Omega,(\FF_t)_{t\ge0},(X_t)_{t\ge0},\zeta,(P_x)_{x\in
E\cup\{\partial\}})$  with state space $E$, life-time $\zeta$ and
cemetery state $\partial$ (see, e.g., \cite{MOR} or \cite[Section
IV.1]{MR} for precise definitions) which is properly associated
with $(\EE,D(\EE))$. Let $(p_t)_{t\ge0}$ be the transition
semigroup of $\BX$ defined by
\[
p_tf(x)=E_xf(X_t),\quad x\in E,\,t\ge0,\,f\in \BB^+(E).
\]
The  statement that $\BX$ is properly associated with
$(\EE,D(\EE))$ means that $p_tf$ is a quasi-continuous $m$-version
of $T_tf$ for every $t>0$ and $f\in\BB_b\cap L^2(E;m)$ (and hence
for every $t>0$ and $f\in L^2(E;m)$ by \cite[Exercise
IV.2.9]{MR}). Equivalently, by \cite[Proposition IV.2.8]{MR}, the
proper association means that $R_{\alpha}f$ is an
$\EE$-quasi-continuous $m$-version of $G_{\alpha}f$ for every
$\alpha>0$ and $f\in\BB_b\cap L^2(E;m)$, where
$(R_{\alpha})_{\alpha>0}$ is the resolvent of $\BX$, i.e.
\[
R_{\alpha}f(x)=E_x\int^{\infty}_0e^{-\alpha t}f(X_t)\,dt,\quad
x\in E,\,\alpha>0,\,f\in\BB^+(E).
\]
By \cite[Theorem IV.6.4]{MR} the process $\BX$ is uniquely
determined by $(\EE,D(\EE))$ in the sense that if $\BX'$ is
another process with state space $E$ properly associated with
$(\EE,D(\EE))$ then $\BX$ and $\BX'$ are $m$-equivalent, i.e.
there is $S\in\BB(E)$ such that $m(E\setminus S)=0$, $S$ is both
$\BX$-invariant and $\BX'$-invariant, and $p_tf(x)=p'_tf(x)$ for
all $x\in S$, $f\in\BB_b(E)$, $t>0$, where $(p'_t)_{t>0}$ is the
transition semigroup of $\BX'$.

\subsection{Smooth measures}

Recall that a positive measure $\mu$ on $\BB(E)$ is said to be
$\EE$-smooth (we write $\mu\in S$) if $\mu(B)=0$ for all
$\EE$-exceptional sets $B\in\BB(E)$ and there exists an $\EE$-nest
$\{F_k\}_{k\in\BN}$ of compact sets such that $\mu(F_k)<\infty$
for $k\in\BN$. A measure $\mu\in S$ is said to be of finite energy
integral (written $\mu\in S_0$) if there is $c>0$ such that
\begin{equation}
\label{eq2.11} \int_E|\tilde v(x)|\,\mu(dx)\le c\EE_1(v,v)^{1/2},
\quad v\in D(\EE).
\end{equation}
If additionally $(\EE,D(\EE))$ is transient then $\mu\in S$ is
said to be of finite 0-order energy integral (written $\mu\in
S^{(0)}_0$) if there is $c>0$ such that
\[
\int_E|\tilde v(x)|\,\mu(dx)\le c\EE(v,v)^{1/2}, \quad v\in\FF_e.
\]

If $(\EE,D(\EE))$ is regular and $E$ is a locally compact
separable metric space then the notion of smooth measures defined
above coincides with that in \cite{FOT}. 
Moreover, if $\mu$ is a positive Radon measure on $E$ such that
(\ref{eq2.11}) is satisfied for all $v\in C_0(E)\cap D(\EE)$ then
$\mu$ charges no $\EE$-exceptional set (see \cite[Remark
A.2]{MMS}) and hence $\mu\in S_0$.

By \cite[Proposition 2.18(ii)]{MOR} (or \cite[Proposition
III.3.6]{MR}) the reference measure $m$ is $\EE$-smooth. Therefore
if $f\in L^1(E;m)$ then $\mu=f\cdot m$ is bounded and smooth. A
general result on the structure of bounded smooth measures is
found in \cite{KR:JMAA}.

Let $\mu\in S_0$ and $\alpha>0$. Then from the Lax-Milgram theorem
(see, e.g., \cite[Theorem 2.7.41]{J1}) it follows that there exist
unique $U_{\alpha}\mu,\hat U_{\alpha}\mu\in D(\EE)$ such that
\[
\EE_{\alpha}(U_{\alpha}\mu,v)=\int_E\tilde
v(x)\,\mu(dx)=\EE_{\alpha}(v,\hat U_{\alpha}\mu),\quad v\in
D(\EE).
\]
Similarly, if $(\EE,D(\EE))$ satisfies the strong sector condition
and $\mu\in S^{(0)}_0$ then from the Lax-Milgram theorem applied
to the Hilbert space $(\FF_e,\tilde\EE)$, the form $\EE$ and
operator $J:\FF_e\rightarrow\BR$ defined by $J(v)=\int_E\tilde
v(x)\,\mu(dx)$ it follows that there exist unique $U\mu,\hat
U\mu\in\FF_e$ such that
\[
\EE(U\mu,v)=\int_E\tilde v(x)\,\mu(dx)=\EE(v,\hat U\mu),\quad
v\in\FF_e\,.
\]

Let $\MM_{0,b}$ denote the subset of $S$ consisting of all
measures $\mu$ such that $\|\mu\|_{TV}<\infty$, where
$\|\mu\|_{TV}$ denotes the total variation of $\mu$, and let
$\MM^+_{0,b}$ denote the subset of $\MM_{0,b}$ consisting of all
positive measures.

The  lemma follows below from the 0-order version of \cite[Theorem
2.2.4]{FOT} be the so-called transfer method.

\begin{lemma}
\label{lem2.2} Assume that $(\EE,D(\EE))$ is transient. If $\mu\in
S$ then there exists a nest $\{F_n\}$ such that
$\fch_{F_n}\cdot\mu\in S^{(0)}_0$ for each $n\in\BN$.
\end{lemma}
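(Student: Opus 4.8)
The plan is to exhibit an explicit $m$-integrable function $g$ testifying transience, regularize it, and then combine two nests: one along which the natural $\sigma$-finiteness witness of $\mu$ is controlled, and one coming from the transference argument applied to the finite-energy situation. First I would invoke transience to fix an $m$-a.e.\ strictly positive bounded $g\in L^1(E;m)$ with $\int_E|u|g\,dm\le\EE(u,u)^{1/2}$ for all $u\in D(\EE)$, and hence, by passing to an approximating sequence, for all $u\in\FF_e$. This says precisely that $g\cdot m\in S_0^{(0)}$: indeed $\int_E|\tilde v|\,d(g\cdot m)=\int_E|v|g\,dm\le\EE(v,v)^{1/2}$ for $v\in\FF_e$. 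The function $g$ is the $0$-order analogue of the function $\tilde G_1\varphi$ used in the definition of $\mathrm{Cap}_\varphi$; its role is to give us a reference measure in $S_0^{(0)}$ against which we can compare $\fch_{F_n}\cdot\mu$.

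Next, since $\mu\in S$, by definition there is an $\EE$-nest $\{F_n^{(1)}\}$ of compact sets with $\mu(F_n^{(1)})<\infty$ for every $n$; thus each $\fch_{F_n^{(1)}}\cdot\mu$ is a finite measure in $S$, i.e.\ lies in $\MM_{0,b}^+$. The heart of the argument is then the $0$-order transfer method: one knows (the $0$-order version of \cite[Theorem 2.2.4]{FOT}, available here because $\EE$ is transient so that $(\FF_e,\tilde\EE)$ is a Hilbert space by \cite[Lemma 1.5.5]{FOT}) that for a finite measure $\nu\in S$ one can find an $\EE$-nest $\{G_k\}$ such that $\fch_{G_k}\cdot\nu\in S_0^{(0)}$ for each $k$; the construction proceeds by setting $G_k=\{x:\widetilde{U^{(0)}(g\cdot m)}(x)\le k\}$-type level sets of a suitable $0$-order potential, using the comparison with $g\cdot m\in S_0^{(0)}$ obtained above to verify the finite-energy inequality $\int_E|\tilde v|\,d(\fch_{G_k}\cdot\nu)\le c_k\,\EE(v,v)^{1/2}$. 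Applying this with $\nu=\fch_{F_n^{(1)}}\cdot\mu$ yields for each $n$ an $\EE$-nest $\{G_k^{(n)}\}_k$ with $\fch_{G_k^{(n)}}\cdot\fch_{F_n^{(1)}}\cdot\mu\in S_0^{(0)}$.

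Finally I would diagonalize. Given the countable family of nests $\{F_n^{(1)}\}$ and $\{G_k^{(n)}\}_k$ ($n\in\BN$), form the nest $F_n:=F_n^{(1)}\cap G_n^{(n)}$ (intersections of finitely many nests are nests, since the capacities $\mathrm{Cap}_\varphi(E\setminus\cdot)$ are subadditive and each goes to $0$); one checks directly that $\{F_n\}$ is increasing along a subsequence after relabeling, and that $\fch_{F_n}\cdot\mu=\fch_{F_n}\cdot\fch_{F_n^{(1)}}\cdot\fch_{G_n^{(n)}}\cdot\mu$, which is a restriction of the $S_0^{(0)}$-measure $\fch_{G_n^{(n)}}\cdot\fch_{F_n^{(1)}}\cdot\mu$ to a Borel set and hence itself lies in $S_0^{(0)}$ (a measure dominated by one in $S_0^{(0)}$ is again in $S_0^{(0)}$, directly from the defining inequality). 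This gives the claimed nest. The main obstacle I anticipate is the transfer step itself: one must be careful that the symmetric/$0$-order Hilbert space machinery of \cite[Theorem 2.2.4]{FOT} genuinely transfers to the (possibly non-regular, quasi-regular) setting here, which is exactly why transience is assumed — it supplies the Hilbert space $(\FF_e,\tilde\EE)$ and the strictly positive $g\in L^1$ needed to run the level-set construction; everything else is bookkeeping with nests and capacities.
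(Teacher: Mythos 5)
Your strategy is the one the paper itself points to (the remark preceding the lemma says it ``follows from the $0$-order version of \cite[Theorem 2.2.4]{FOT} by the so-called transfer method'', and the printed proof is simply a citation of \cite[Lemma 2.1]{KR:JMAA}), but as written the proposal does not prove the lemma: the step you yourself flag as ``the main obstacle'' --- the transfer --- is the entire mathematical content, and you leave it unexecuted. The statement you invoke, namely that for a (finite) measure $\nu\in S$ relative to the quasi-regular form $(\EE,D(\EE))$ there is an $\EE$-nest $\{G_k\}$ with $\fch_{G_k}\cdot\nu\in S^{(0)}_0$, is not available from \cite{FOT}, which treats only regular symmetric forms on locally compact separable metric spaces; for general $\mu\in S$ it is literally the statement of the lemma, so invoking it is circular. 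The missing argument is the passage to the regular representation $(\EE^{\#},D(\EE^{\#}))$ on $E^{\#}$ of \cite[Theorem VI.1.2]{MR}, applied to the symmetric part $\tilde\EE$ (legitimate because $S^{(0)}_0$ is defined through $\EE(v,v)^{1/2}=\tilde\EE(v,v)^{1/2}$ and transience depends only on $\tilde\EE$): one must check that the extension $\mu^{\#}$ of $\mu$ is $\EE^{\#}$-smooth, apply the genuine $0$-order version of \cite[Theorem 2.2.4]{FOT} on $E^{\#}$ to obtain a nest $\{F^{\#}_n\}$, intersect with the nest $\{E_k\}$ of \cite[Theorem VI.1.2]{MR} and pull back to $E$ via \cite[Corollary VI.1.4]{MR}, and verify that the $0$-order energy inequality over $\FF^{\#}_e$ restricts to one over $\FF_e$. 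This is exactly the pattern used in the proofs of Lemmas \ref{lem2.1}, \ref{lem2.8} and \ref{lem2.10} of the paper, and it is what \cite[Lemma 2.1]{KR:JMAA} carries out.

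Two smaller points. First, the reduction to finite measures via the compact nest $\{F^{(1)}_n\}$ and the subsequent diagonalization are superfluous once one has the $0$-order \cite[Theorem 2.2.4]{FOT} for $\EE$ at all, since that theorem already handles arbitrary $\mu\in S$; moreover, as written the diagonal sequence $F_n=F^{(1)}_n\cap G^{(n)}_n$ is not increasing and cannot be made increasing by ``relabeling'', because the nests $\{G^{(n)}_k\}_k$ for different $n$ bear no inclusion relation to one another. The standard way to merge countably many nests is to pick indices $k_j(n)$, increasing in $n$, with $\mathrm{Cap}_{\varphi}(E\setminus G^{(j)}_{k_j(n)})\le 2^{-j-n}$ and set $F_n=F^{(1)}_n\cap\bigcap_{j\ge1}G^{(j)}_{k_j(n)}$; your closing observation that a smooth measure dominated by one in $S^{(0)}_0$ is again in $S^{(0)}_0$ then finishes that bookkeeping correctly. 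Second, your opening paragraph showing $g\cdot m\in S^{(0)}_0$ is correct but never actually used, since the level-set construction it was meant to feed is only described, not carried out.
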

\begin{proof}
See \cite[Lemma 2.1]{KR:JMAA}.
\end{proof}

\begin{lemma}
\label{lem2.5} Assume that $(\EE,D(\EE))$ is transient and
satisfies the strong sector condition. If $\mu\in S^{(0)}_0$ then
$\{U_{\alpha}\mu\}$ is weakly $\EE$-convergent to $U\mu$ as
$\alpha\downarrow0$.
\end{lemma}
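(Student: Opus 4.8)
The plan is to exploit the variational characterizations of $U_\alpha\mu$ and $U\mu$ and the fact that $(\FF_e,\tilde\EE)$ is a Hilbert space (transience). First I would show that the net $\{U_\alpha\mu\}_{\alpha\in(0,1)}$ is bounded in $(\FF_e,\tilde\EE)$. For this, test the defining relation $\EE_\alpha(U_\alpha\mu,v)=\int_E\tilde v\,d\mu$ with $v=U_\alpha\mu$ to get $\tilde\EE(U_\alpha\mu,U_\alpha\mu)+\alpha(U_\alpha\mu,U_\alpha\mu)=\int_E\widetilde{U_\alpha\mu}\,d\mu$ (the antisymmetric part of $\EE$ vanishes on the diagonal). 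Since $\mu\in S_0^{(0)}$ there is $c>0$ with $\int_E|\tilde v|\,d\mu\le c\,\EE(v,v)^{1/2}\le c\,K\,\tilde\EE(v,v)^{1/2}$ for $v\in\FF_e$ (using the strong sector condition to pass between $\EE$ and $\tilde\EE$, or working with $\EE$ directly), so $\tilde\EE(U_\alpha\mu,U_\alpha\mu)\le cK\,\tilde\EE(U_\alpha\mu,U_\alpha\mu)^{1/2}$, whence $\tilde\EE(U_\alpha\mu,U_\alpha\mu)^{1/2}\le cK$ uniformly in $\alpha$. The extra nonnegative term $\alpha\|U_\alpha\mu\|_{L^2}^2$ is discarded.

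Next, by reflexivity of the Hilbert space $(\FF_e,\tilde\EE)$, every sequence $\alpha_n\downarrow0$ has a subsequence along which $U_{\alpha_n}\mu$ converges weakly in $(\FF_e,\tilde\EE)$ to some $w\in\FF_e$. I would then identify $w=U\mu$ by checking the defining equation $\EE(w,v)=\int_E\tilde v\,d\mu$ for all $v\in\FF_e$. Fix $v\in D(\EE)$ first: from $\EE_{\alpha_n}(U_{\alpha_n}\mu,v)=\int_E\tilde v\,d\mu$ we have $\EE(U_{\alpha_n}\mu,v)=\int_E\tilde v\,d\mu-\alpha_n(U_{\alpha_n}\mu,v)$. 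The left side tends to $\EE(w,v)$ because $v\mapsto\EE(\cdot,v)$ is a bounded linear functional on $(\FF_e,\tilde\EE)$ (again by the strong sector condition, $|\EE(u,v)|\le K\,\tilde\EE(u,u)^{1/2}\tilde\EE(v,v)^{1/2}$ for $u,v\in\FF_e$), and weak convergence of $U_{\alpha_n}\mu$ applies. The correction term $\alpha_n(U_{\alpha_n}\mu,v)$ tends to $0$: for $v\in D(\EE)\subset L^2(E;m)$ one has $|\alpha_n(U_{\alpha_n}\mu,v)|\le \alpha_n\|U_{\alpha_n}\mu\|_{L^2}\|v\|_{L^2}$ and $\alpha_n\|U_{\alpha_n}\mu\|_{L^2}^2\le\int_E\widetilde{U_{\alpha_n}\mu}\,d\mu\le cK\cdot cK$ is bounded, so $\alpha_n\|U_{\alpha_n}\mu\|_{L^2}=O(\alpha_n^{1/2})\to0$. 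Hence $\EE(w,v)=\int_E\tilde v\,d\mu$ for $v\in D(\EE)$, and since $D(\EE)$ is $\tilde\EE$-dense in $\FF_e$ and both sides are continuous in $v$ with respect to $\tilde\EE^{1/2}$ (the right side by $\mu\in S_0^{(0)}$), the identity extends to all $v\in\FF_e$. By uniqueness in the Lax–Milgram characterization of $U\mu$, $w=U\mu$. Since the weak limit is the same ($U\mu$) along every subsequence, the whole net $U_\alpha\mu$ converges weakly to $U\mu$ as $\alpha\downarrow0$.

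The main obstacle is the bookkeeping around the quasi-continuous versions: one must make sure that $\widetilde{U_\alpha\mu}$ (needed to evaluate $\int_E\widetilde{U_\alpha\mu}\,d\mu$) is well defined and that the estimate $\int_E|\tilde v|\,d\mu\le c\,\EE(v,v)^{1/2}$ is legitimately applied to $v=U_\alpha\mu\in D(\EE)\subset\FF_e$; this is fine since the paper has already recorded that elements of $\FF_e$ admit q.e.-unique quasi-continuous versions in the transient quasi-regular case. A secondary subtlety is handling the antisymmetric part of $\EE$: I am using that $\EE(u,u)=\tilde\EE(u,u)$ and the sector condition to move freely between $\EE(u,u)^{1/2}$ and $\tilde\EE(u,u)^{1/2}$, which is exactly where the strong sector hypothesis is used. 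Everything else is a routine weak-compactness argument.
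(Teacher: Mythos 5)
Your proof is correct, but the route to the key a priori bound differs from the paper's. You obtain $\tilde\EE$-boundedness of $\{U_\alpha\mu\}$ by the most direct means: testing $\EE_\alpha(U_\alpha\mu,v)=\langle\mu,\tilde v\rangle$ with $v=U_\alpha\mu$, discarding the nonnegative term $\alpha\|U_\alpha\mu\|^2_{L^2}$, and invoking the defining inequality of $S^{(0)}_0$ (note that here you do not even need the sector constant $K$, since $\EE(v,v)=\tilde\EE(v,v)$ identically). The paper instead derives the resolvent-type identities $U\mu-U_\alpha\mu=\alpha G_0U_\alpha\mu$ and $\hat U\mu-\hat U_\alpha\mu=\alpha\hat G_0\hat U_\alpha\mu$, and a duality computation with the co-potentials yields the sharper monotone bound $\EE_\alpha(U_\alpha\mu,U_\alpha\mu)=\langle\mu,\widetilde{U_\alpha\mu}\rangle\le\langle\mu,\widetilde{U\mu}\rangle$; your argument buys simplicity at the cost of this quantitative information (which is not needed for the weak convergence itself). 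The identification of the weak limit is essentially the same in both arguments --- pass to the limit in $\EE(U_{\alpha}\mu,v)=\langle\mu,\tilde v\rangle-\alpha(U_{\alpha}\mu,v)$ for $v\in D(\EE)$, using the strong sector condition on $(\FF_e,\tilde\EE)$ to make $u\mapsto\EE(u,v)$ weakly continuous, then extend by density --- but you are more explicit than the paper about the compactness bookkeeping: you extract a weakly convergent subsequence from every sequence $\alpha_n\downarrow0$ and use uniqueness of the limit point to conclude convergence of the whole net, whereas the paper simply supposes a weak limit $f$ exists and identifies it. Your handling of the error term via $\alpha\|U_\alpha\mu\|^2_{L^2}\le\langle\mu,\widetilde{U_\alpha\mu}\rangle=O(1)$, hence $\alpha\|U_\alpha\mu\|_{L^2}=O(\alpha^{1/2})$, is a clean touch that the paper leaves implicit.
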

\begin{proof}
Let $v\in\FF_e$ and let $\{v_k\}\subset D(\EE)$ be an
approximating sequence for $v$. We have
\[
\EE(U\mu-U_{\alpha}\mu,v_k)=\alpha(U_{\alpha}\mu,v_k),\quad
\EE(G_0U_{\alpha}\mu,v_k)=(U_{\alpha}\mu,v_k).
\]
Hence $\EE(U\mu-U_{\alpha}\mu,v_k)=\EE(\alpha
G_0U_{\alpha}\mu,v_k)$. Letting $k\rightarrow\infty$ we deduce
$\EE(U\mu- U_{\alpha}\mu,v)=\EE(\alpha G_0U_{\alpha}\mu,v)$.
Consequently, $U\mu-U_{\alpha}\mu=\alpha G_0U_{\alpha}\mu$. In the
same manner we can see that $\hat U\mu-\hat
U_{\alpha}\mu=\alpha\hat G_0\hat U_{\alpha}\mu$. Hence,
\[
\EE(U\mu-U_{\alpha}\mu,\hat U\mu-\hat U_{\alpha}\mu)
=\alpha^2\EE(G_0U_{\alpha}\mu,\hat G_0\hat U_{\alpha}\mu)
=\alpha^2(G_0U_{\alpha}\mu,\hat U_{\alpha}\mu)\ge0.
\]
On the other hand,
\begin{align*}
\EE(U\mu-U_{\alpha}\mu,\hat U\mu-\hat U_{\alpha}\mu)
&=\EE(U\mu,\hat U\mu-\hat U_{\alpha}\mu) -\EE(U_{\alpha}\mu,\hat
U\mu) +\EE_{\alpha}(U_{\alpha}\mu,\hat U_{\alpha}\mu)\\
&\quad-\alpha(U_{\alpha}\mu,\hat U_{\alpha}\mu)\\
&=\EE(U\mu,\hat U\mu-\hat U_{\alpha}\mu)-\alpha(U_{\alpha}\mu,\hat
U_{\alpha}\mu)\\
&\le \EE(U\mu,\hat U\mu-\hat U_{\alpha}\mu)\\
&= \langle\mu,\widetilde{U\mu}\rangle-\langle\mu,\widetilde{\hat
U_{\alpha}\mu}\rangle.
\end{align*}
Since $\langle\mu,\widetilde{\hat U_{\alpha}\mu}\rangle
=\EE_{\alpha}(U_{\alpha}\mu,\hat U_{\alpha}\mu)=\langle
\mu,\widetilde{U_{\alpha}\mu}\rangle$, it follows from the above
that
\[
\EE(U_{\alpha}\mu,U_{\alpha}\mu)+\alpha(U_{\alpha}\mu,U_{\alpha}\mu)
=\EE_{\alpha}(U_{\alpha}\mu,U_{\alpha}\mu) =\langle\mu,
\widetilde{U_{\alpha}\mu}\rangle\le\langle\mu,\widetilde{U\mu}\rangle
\]
for $\alpha>0$. Hence $\{U_{\alpha}\mu\}_{\alpha>0}$ is $\tilde
\EE$-bounded and for each $k\in\BN$,
$\alpha(U_{\alpha}\mu,v_k)\rightarrow0$ as $\alpha\downarrow0$.
Suppose that $\{U_{\alpha}\mu\}$ converges $\tilde\EE$-weakly to
some $f\in\FF_e$ as $\alpha\downarrow0$. Since
\[
\EE(U_{\alpha}\mu,v_k)=\langle\mu, \tilde
v_k\rangle-\alpha(U_{\alpha}\mu,v_k),
\]
letting $\alpha\downarrow0$ shows that
$\EE(f,v_k)=\langle\mu,\tilde v\rangle=\EE(U\mu,v_k)$. Letting
$k\rightarrow\infty$ we get $\EE(f,v)=\EE(U\mu,v)$ for
$v\in\FF_e$. Thus $f=U\mu$.
\end{proof}

\subsection{Smooth measures and additive functionals}

Let $\BX$ be the Markov process properly associated with
$(\EE,D(\EE))$. In what follows for a Borel measure $\nu$ on $E$
we set $P_{\nu}(\cdot)=\int_EP_x(\cdot)\,\nu(dx)$, and by
$E_{\nu}$ we denote the expectation with respect to $P_{\nu}$.

By \cite[Theorem VI.2.4]{MR} there is a one-to-one correspondence
between $\EE$-smooth measures $\mu$ on $\BB(E)$ and positive
continuous additive functionals (PCAFs) $A$ of $\BX$. It is given
by the relation
\begin{equation}
\label{eq2.2}
\lim_{t\downarrow0}\frac1tE_m\int^t_0f(X_s)\,dA_s=\int_Ef\,d\mu,
\quad f\in\BB^+(E).
\end{equation}
In what follows the additive functional corresponding to $\mu$ in
the sense of (\ref{eq2.2}) will be denoted by $A^{\mu}$. In the
important case where $\mu= f\cdot m$ for some $f\in L^1(E;m)$ the
additive functional $A^{\mu}$ is given by
\[
A^{\mu}_t=\int^{t}_0f(X_s)\,ds,\quad t\ge0.
\]

The following lemma generalizes \cite[Lemma 4.3]{KR:JFA}.

\begin{lemma}
\label{lem2.1} If $A$ is a PCAF of $\BX$ such that
$E_xA_{\zeta}<\infty$ for $m$-a.e. $x\in E$ then
$u:E\rightarrow\bar\BR$ defined as
\[
u(x)=E_xA_{\zeta},\quad x\in E
\]
is $\EE$-quasi-continuous. In particular, $u$ is $\EE$-q.e.
finite.
\end{lemma}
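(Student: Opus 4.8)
The plan is to show that $u(x) = E_x A_\zeta$ is quasi-continuous by exhibiting an $\EE$-nest on which $u$ is continuous, using the standard technique of approximating $A$ by additive functionals with bounded potential and invoking the resolvent characterization of proper association. First I would reduce to the case of a bounded potential. Since $A$ is a PCAF with associated smooth measure $\mu \in S$, there is an $\EE$-nest $\{F_k\}$ of compact sets with $\mu(F_k) < \infty$; replacing $A$ by the PCAF associated with $\fch_{F_k}\cdot\mu$ (equivalently, the functional $A^k_t = \int_0^t \fch_{F_k}(X_s)\,dA_s$) gives an increasing sequence $A^k \uparrow A$, hence $u_k(x) := E_x A^k_\zeta \uparrow u(x)$ pointwise. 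So it suffices to prove each $u_k$ is quasi-continuous (a pointwise increasing limit of quasi-continuous functions that is q.e. finite is quasi-continuous along a common refinement of the nests), and then to control the convergence.

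Next I would handle a single PCAF $B$ with associated measure $\nu$ of finite energy, i.e. $\nu \in S_0$ or, in the transient case, $\nu \in S^{(0)}_0$ (this is where Lemma~\ref{lem2.2} enters to produce such a decomposition). For $\nu\in S_0$ the $\alpha$-potential $U_\alpha\nu \in D(\EE)$ is defined, and the probabilistic $\alpha$-potential $R_\alpha^B \mathbf{1}(x) := E_x\int_0^\infty e^{-\alpha t}\,dB_t$ is a quasi-continuous $m$-version of $U_\alpha\nu$ by the additive-functional theory of \cite[Ch.~VI]{MR} (this is the analogue, for AFs, of $R_\alpha f$ being a q.c.\ version of $G_\alpha f$). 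Letting $\alpha \downarrow 0$, the probabilistic potentials $E_x\int_0^\zeta e^{-\alpha t}\,dB_t$ increase to $E_x B_\zeta$; on the analytic side, $U_\alpha\nu$ converges (weakly in $\EE$, by the argument of Lemma~\ref{lem2.5}, and in fact one gets $\tilde\EE$-boundedness hence a q.e.-convergent subsequence of quasi-continuous versions along a nest) to a quasi-continuous function. Identifying the two limits q.e.\ gives that $x\mapsto E_x B_\zeta$ is quasi-continuous.

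The main obstacle, and the step requiring the most care, is the passage to the limit: combining the limit over $k$ (truncating the measure to the nest) with the limit over $\alpha\downarrow 0$ (from $\alpha$-potentials to $0$-potentials), and extracting from the uniform energy bounds a sequence of quasi-continuous versions converging q.e.\ to $u$. The clean way is to use the standard fact that if $v_n$ are quasi-continuous, $v_n \to v$ $m$-a.e., and $v_n$ is $\tilde\EE_1$-convergent (or just $\tilde\EE_1$-bounded with an a.e.\ limit that is q.e.\ finite), then a subsequence of the $v_n$ converges uniformly on the sets of an $\EE$-nest, so $v$ has a quasi-continuous version equal to $v$ q.e.; since $u$ is the everywhere pointwise monotone limit of the q.c.\ functions $u_k$ and is q.e.\ finite by hypothesis, one concludes $u$ itself is quasi-continuous (a q.e.-finite monotone limit of q.c.\ functions is q.c., as the limit is q.c.\ on each set of a common nest by Dini-type or elementary arguments). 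Finiteness q.e.\ is then immediate from quasi-continuity, since a quasi-continuous function is finite off an exceptional set; this also follows directly from $u<\infty$ $m$-a.e.\ together with the fact that the exceptional set where $u=\infty$ would otherwise be charged by $m$.

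Alternatively, one can bypass the nest-refinement bookkeeping by the \emph{transfer method} already used for Lemma~\ref{lem2.2}: pick $\phi\in L^1(E;m)$, $\phi>0$, work with the transient form $\EE^\phi$ obtained by killing, transfer the PCAF accordingly so that its associated measure lands in $S_0^{(0)}$, apply the $0$-order resolvent identification there (where $U\nu\in\FF_e$ is a genuine element and $E_x\,(\text{transferred }B_\zeta)$ is its q.c.\ version), and then transfer back. I expect the write-up to follow whichever of these is shortest given the tools in \cite{MR} and \cite{KR:JMAA}; the conceptual content is the same, and the only genuinely delicate point is the interchange of the two limiting procedures.
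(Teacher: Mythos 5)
There is a genuine gap, and it sits exactly where you flag the ``only genuinely delicate point'': the passage to the limit. Your reduction rests on the claim that a q.e.-finite pointwise increasing limit of quasi-continuous functions is quasi-continuous along a common nest. That is not a valid general principle: on the compact sets of a nest an increasing limit of continuous functions is only lower semicontinuous, and the Dini-type argument you invoke presupposes continuity of the limit, so it is circular. The standard way to repair this --- quasi-uniform convergence of a (sub)sequence extracted from $\tilde\EE_1$-convergence, or $\tilde\EE$-boundedness plus Banach--Saks as in Lemmas \ref{lem2.5} and \ref{lem4.1} --- is not available here, because Lemma \ref{lem2.1} assumes neither transience nor the strong sector condition, and the sole hypothesis $E_xA_\zeta<\infty$ $m$-a.e.\ gives no finite-energy or $0$-order potential structure whatsoever (the $0$-order objects $U\nu$, $S^{(0)}_0$ need not exist, and $\EE_\alpha(U_\alpha\nu,U_\alpha\nu)$ need not stay bounded as $\alpha\downarrow0$). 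Your fallback via killing by $\phi$ does not close the gap either: the killed expectation $E_x\int_0^\zeta e^{-\int_0^t\phi(X_s)\,ds}\,dA_t$ is a different function from $E_xA_\zeta$, so after proving quasi-continuity for the transient form $\EE^\phi$ you would still have to undo the killing, and the capacities of $\EE^\phi$ need not be comparable to those of $\EE$ when $\phi$ is merely in $L^1$.

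The paper's proof takes a different and much shorter route that sidesteps all of this: it passes to the regular extension $(\EE^{\#},D(\EE^{\#}))$ on a locally compact space $E^{\#}$ given by \cite[Theorem VI.1.2]{MR}, extends $A$ trivially to a PCAF of the extended Hunt process (which changes nothing since $m^{\#}(E^{\#}\setminus E)=0$), invokes the already established regular-form result \cite[Lemma 4.3]{KR:JFA} to get $\EE^{\#}$-quasi-continuity of $u^{\#}$, and restricts back to $E$ using \cite[Corollary VI.1.4]{MR}; the restriction is literally $u$, so nothing has to be re-identified. This is the genuine ``transfer method'' (quasi-homeomorphism onto a regular form), not a killing transform, and it is where the monotone-limit difficulty you encountered is actually resolved --- inside the regular, locally compact setting of \cite{KR:JFA}, where the tools for handling excessive functions of a Hunt process are available. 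If you want a self-contained direct proof in the quasi-regular non-symmetric setting, you must supply a substitute for that machinery; as written, your argument does not.
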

\begin{proof}
Let $(\EE^{\#},D(\EE^{\#}))$ denote the regular extension of
$(\EE,D(\EE))$ specified by \cite[Theorem VI.1.2]{MR}. By
\cite[Theorem VI.1.6]{MR},  $\BX$ can be trivially extended to a
Hunt process $\BX^{\#}$ defined on $\Omega\cup(E^{\#}\setminus E)$
with state space $E^{\#}$ properly associated with the form
$(\EE^{\#},D(\EE^{\#}))$. Let us extend $A$ to a PCAF of
$\BX^{\#}$ by setting
\begin{equation}
\label{eqp.1} A^{\#}_{t}(\omega)=A_{t}(\omega),\, t\ge 0,\,
\omega\in \Omega,\quad A^{\#}_{t}(\omega)=0,\, t\ge 0,\, \omega\in
E^{\#}\setminus E.
\end{equation}
By the assumption and since $m^{\#}(E^{\#}\setminus E)=0$, we have
$E^{\#}_{x}A^{\#}_{\zeta^{\#}}<\infty$, $m^{\#}$-a.e. Therefore,
by \cite[Lemma 4.3]{KR:JFA}, the function
$u^{\#}(x)=E^{\#}_{x}A^{\#}_{\zeta^{\#}}$ is
$\EE^{\#}$-quasi-continuous on $E^{\#}$. By \cite[Corollary
VI.1.4]{MR}, $u^{\#}_{|E}$ is $\EE$-quasi-continuous on $E$, which
proves the first part of the lemma since
$u^{\#}_{|E}(x)=E_{x}A_{\zeta}$, $x\in E$. The second part is
immediate from the definition of quasi-continuity.
\end{proof}

\begin{lemma}
\label{lem4.1} Assume that $(\EE,D(\EE))$ is transient and
satisfies the strong sector condition. If $\mu\in S^{(0)}_0$ then
$u$ defined as
\[
u(x)=E_xA^{\mu}_{\zeta},\quad x\in E
\]
is a quasi-continuous version of $U\mu$.
\end{lemma}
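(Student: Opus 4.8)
The plan is to show that the function $u(x)=E_xA^\mu_\zeta$ agrees $\EE$-q.e.\ with the $0$-order potential $U\mu$ by a two-step strategy: first establish the identity when $\mu$ is replaced by the $\alpha$-level objects, then pass to the limit $\alpha\downarrow0$ using Lemma~\ref{lem2.5}. More precisely, for $\mu\in S^{(0)}_0$ and $\alpha>0$ we have $\mu\in S_0$ as well (transience only helps here), so $U_\alpha\mu\in D(\EE)$ is defined, and the standard theory of additive functionals for quasi-regular forms (the $\alpha$-order analogue of \cite[Theorem VI.2.4]{MR}, or the probabilistic representation of $\alpha$-potentials) gives that $R_\alpha(d A^\mu):=E_\cdot\int_0^\zeta e^{-\alpha t}\,dA^\mu_t$ is a quasi-continuous $m$-version of $U_\alpha\mu$. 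This is the known finite-energy case and I would cite it rather than reprove it.

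Next I would identify the limit on the probabilistic side. Since $A^\mu$ is a PCAF and $E_xA^\mu_\zeta=u(x)$ is assumed (via $\mu\in S^{(0)}_0$, which forces integrability) to be finite $m$-a.e., monotone convergence gives $E_x\int_0^\zeta e^{-\alpha t}\,dA^\mu_t\uparrow E_xA^\mu_\zeta=u(x)$ as $\alpha\downarrow0$, for $m$-a.e.\ $x$, hence (working with the quasi-continuous versions and using that an increasing limit of quasi-continuous functions is quasi-continuous, together with Lemma~\ref{lem2.1} which already tells us $u$ itself is quasi-continuous) $\EE$-q.e. On the analytic side, Lemma~\ref{lem2.5} gives that $U_\alpha\mu\to U\mu$ weakly in $(\FF_e,\tilde\EE)$ as $\alpha\downarrow0$. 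The task is then to reconcile these two limits: the $m$-a.e.\ (monotone) limit $u$ and the $\tilde\EE$-weak limit $U\mu$.

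To close the gap I would test against measures in $S_0^{(0)}$ (or a suitable dense subfamily). For $\nu\in S_0^{(0)}$ with co-potential $\hat U\nu\in\FF_e$, weak convergence gives $\EE(U_\alpha\mu,\hat U\nu)\to\EE(U\mu,\hat U\nu)=\langle\nu,\widetilde{U\mu}\rangle$; on the other hand $\EE(U_\alpha\mu,\hat U\nu)=\langle\mu,\widetilde{\hat U_\alpha\nu}\rangle$ (by the defining relation for $U_\alpha,\hat U_\alpha$ together with the estimate $\EE(U_\alpha\mu,\hat U\nu)-\EE_\alpha(U_\alpha\mu,\hat U_\alpha\nu)=-\alpha(\ldots)\to0$, as in the proof of Lemma~\ref{lem2.5}), and the probabilistic representation of $\hat U_\alpha\nu$ plus Fubini turns $\langle\mu,\widetilde{\hat U_\alpha\nu}\rangle$ into $\int_E (E_x\int_0^\zeta e^{-\alpha t}\,dA^\mu_t)\,\nu(dx)\to\int_E u\,d\nu=\langle\nu,u\rangle$ by monotone convergence. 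Hence $\langle\nu,u\rangle=\langle\nu,\widetilde{U\mu}\rangle$ for all $\nu\in S_0^{(0)}$, and since such $\nu$ separate quasi-continuous functions (two $\EE$-q.e.\ defined quasi-continuous functions that integrate equally against every measure in $S_0^{(0)}$ must coincide $\EE$-q.e.), we conclude $u=\widetilde{U\mu}$ $\EE$-q.e.

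The main obstacle I anticipate is the interchange of the two limiting procedures — justifying that the $\tilde\EE$-weak limit and the $m$-a.e.\ monotone limit describe the \emph{same} $\EE$-q.e.\ equivalence class — which is exactly what the duality/testing argument in the third paragraph is designed to handle; the delicate point there is having enough measures $\nu$ in $S_0^{(0)}$ with \emph{bounded} (or at least quasi-continuous, finite) co-potentials to separate points, and making the Fubini step rigorous given only $m$-a.e.\ finiteness of $u$ (one restricts to a nest via Lemma~\ref{lem2.2} if needed and uses that the PCAF is carried by a nest on which everything is finite). The rest — the finite-energy $\alpha$-level identity and the monotone convergence — is routine once the right references are in place.
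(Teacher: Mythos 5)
Your argument is correct in substance, but it identifies $u$ with $U\mu$ by a genuinely different mechanism than the paper. Both proofs start identically, from the fact that $R_\alpha\mu=E_\cdot\int_0^\zeta e^{-\alpha t}\,dA^\mu_t$ is a quasi-continuous version of $U_\alpha\mu$, and both invoke Lemma \ref{lem2.5}. The paper then upgrades the weak $\tilde\EE$-convergence to strong convergence via the Banach--Saks theorem: the Ces\`aro means $w_n$ of a subsequence of the $R_{\alpha_n}\mu$ converge to $U\mu$ in $(\FF_e,\tilde\EE)$ and pointwise to $u$, so $\{w_n\}$ is an approximating sequence for $u$, giving $u\in\FF_e$ and $u=U\mu$ $m$-a.e.\ at one stroke; quasi-continuity is then obtained by Dini's theorem on a nest on which all the $R_{\alpha_n}\mu$ are continuous, without appealing to Lemma \ref{lem2.1}. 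You instead keep only the weak convergence and close the gap by duality: $\EE(U_\alpha\mu,\hat U\nu)=\EE_\alpha(U_\alpha\mu,\hat U_\alpha\nu)=\langle\nu,\widetilde{U_\alpha\mu}\rangle$ (the correction term vanishes exactly, by the identity $\hat U\nu-\hat U_\alpha\nu=\alpha\hat G_0\hat U_\alpha\nu$ from the proof of Lemma \ref{lem2.5}, so no Fubini over the dual process is actually needed), whence $\langle\nu,u\rangle=\langle\nu,\widetilde{U\mu}\rangle$ for all $\nu\in S^{(0)}_0$; since this class is stable under multiplication by $\fch_B$, one gets $u=\widetilde{U\mu}$ $\nu$-a.e.\ for every $\nu\in S^{(0)}_{00}$ and concludes by Lemma \ref{lem2.10}. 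This is legitimate --- it is precisely the separation argument the paper itself uses in Proposition \ref{prop3.2} --- but note two points you gloss over: (i) the $m$-a.e.\ finiteness of $u$, needed before Lemma \ref{lem2.1} applies, is not ``forced'' formally by $\mu\in S^{(0)}_0$; it follows from the uniform bound $\EE(U_\alpha\mu,U_\alpha\mu)\le\langle\mu,\widetilde{U\mu}\rangle$ together with the transience inequality (\ref{eq2.3}) and monotone convergence (or, as in the paper, from the Ces\`aro argument); (ii) an increasing limit of quasi-continuous functions need not be quasi-continuous, so drop that remark and rely on Lemma \ref{lem2.1} alone. The trade-off: the paper's route yields the extra information $u\in\FF_e$ with an explicit approximating sequence (reused in the proof of Theorem \ref{th4.1}), while yours avoids Banach--Saks at the cost of leaning on the heavier Lemmas \ref{lem2.1} and \ref{lem2.10}, both proved by transfer to a regular extension.
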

\begin{proof}
By \cite[Proposition A.7]{MMS}, for every $\alpha>0$ the function
$R_{\alpha}\mu$ defined by $R_{\alpha}\mu(x)=
E_x\int^{\infty}_0e^{-\alpha t}dA^{\mu}_t$, $x\in E$, is a
quasi-continuous version of $U_{\alpha}\mu$. Therefore, by Lemma
\ref{lem2.5} and the Banach-Saks theorem, there exists sequences
$\alpha_n\downarrow0$ and $\{n_k\}$ such that the Ces\`aro mean
sequence $\{w_n=(1/n)\sum^n_{k=1}u_{n_k}\}$, where
$u_n=R_{\alpha_n}\mu$, is $\tilde\EE$-convergent to $U\mu$. On the
other hand, by the monotone convergence theorem,
$u_n(x)\rightarrow u(x)$ for $x\in E$, and hence
$w_n(x)\rightarrow u(x)$ for $x\in E$. Consequently,  $\{w_n\}$ is
an approximating sequence for $u$. Therefore $u\in\FF_e$ and
\[
\tilde\EE(u-U{\mu},u-U\mu)^{1/2}\le\lim_{n\rightarrow\infty}
(\tilde\EE(u-w_n,u-w_n)^{1/2}+
\tilde\EE(U\mu-w_n,U\mu-w_n)^{1/2})=0.
\]
Since $(\tilde\EE,\FF_e)$ is a Hilbert space, it follows that $u$
is an $m$-version of $U\mu$. To show that $u$ is quasi-continuous,
let us first note that by \cite[Proposition III.3.3]{MR} there is
an $\EE$-nest $\{F_k\}$ such that $\{u_n\}\subset C(\{F_k\})$.
Since $\EE$ is quasi-regular, there exists an $\EE$-nest $\{E_k\}$
consisting of compact sets. Write $F'_k=F_k\cap E_k$. Then
$\{F'_k\}$ is an $\EE$-nest consisting of compact sets and
$\{u_n\}\subset C(\{F'_k\})$. Since ${u_n}_{|F'_k}\nearrow
u_{|F'_k}$ as $n\rightarrow\infty$ for each $k\in\BN$, Dini's
theorem shows that $u$ is in $C(\{F'_k\})$, which is our claim.
\end{proof}

Let $S^{(0)}_{00}$ (resp. $\hat S^{(0)}_{00}$) denote the subset
of $S^{(0)}_{0}$ consisting of all measures $\nu$ such that
$\nu(E)<\infty$ and $\|U\nu\|_{\infty}<\infty$ (resp. $\|\hat
U\nu\|_{\infty}<\infty$).

\begin{lemma}
\label{lem2.7} Assume that $(\EE,D(\EE))$ is transient and
satisfies the strong sector condition. If $\mu\in S$, $\nu\in\hat
S^{(0)}_{00}$ then for any nonnegative Borel function $f$,
\begin{equation}
\label{eq2.16} E_{\nu}\int_{0}^{\zeta}f(X_{t})\,dA^{\mu}_{t}
=\langle f\cdot\mu,\widetilde{\hat U\nu}\rangle.
\end{equation}
\end{lemma}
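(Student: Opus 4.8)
The plan is to establish (\ref{eq2.16}) first for $\mu$ of finite $0$-order energy integral, and then to reach an arbitrary $\mu\in S$ by truncation, using monotone convergence at each stage. The starting point is the basic case: if $\lambda\in S^{(0)}_0$ and $\nu\in\hat S^{(0)}_{00}$, then $E_\nu A^\lambda_\zeta=\langle\lambda,\widetilde{\hat U\nu}\rangle$. To prove this I would write $E_\nu A^\lambda_\zeta=\int_E E_x(A^\lambda_\zeta)\,\nu(dx)$ (legitimate since $P_\nu=\int_E P_x\,\nu(dx)$ and $A^\lambda\ge0$), observe that by Lemma \ref{lem4.1} the function $x\mapsto E_xA^\lambda_\zeta$ is an $\EE$-quasi-continuous version of $U\lambda$, and use that $\nu$ charges no $\EE$-exceptional set to get $E_\nu A^\lambda_\zeta=\langle\nu,\widetilde{U\lambda}\rangle$. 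Applying the defining relation of $\hat U\nu$ with $v=U\lambda\in\FF_e$ and then the defining relation of $U\lambda$ with $v=\hat U\nu\in\FF_e$ yields
\[
\langle\nu,\widetilde{U\lambda}\rangle=\EE(U\lambda,\hat U\nu)=\langle\lambda,\widetilde{\hat U\nu}\rangle ,
\]
all terms being finite because $\lambda,\nu\in S^{(0)}_0$.

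Next I would derive (\ref{eq2.16}) for $\mu\in S^{(0)}_0$. If $f$ is a bounded nonnegative Borel function, then $f\cdot\mu\in S^{(0)}_0$ and $A^{f\cdot\mu}_t=\int_0^t f(X_s)\,dA^\mu_s$, so applying the basic case with $\lambda=f\cdot\mu$ gives (\ref{eq2.16}) for such $f$. For an arbitrary nonnegative Borel $f$ I would apply this to $f\wedge k$ and let $k\to\infty$: the left-hand side increases to $E_\nu\int_0^\zeta f(X_t)\,dA^\mu_t$ by monotone convergence for the Stieltjes integral $dA^\mu$, and the right-hand side increases to $\langle f\cdot\mu,\widetilde{\hat U\nu}\rangle$ because $\widetilde{\hat U\nu}\ge0$ $\EE$-q.e.\ (being the quasi-continuous version of the $0$-order co-potential of the positive measure $\nu$; e.g.\ $\hat U\nu\ge\hat U_\alpha\nu\ge0$ $m$-a.e., cf.\ the proof of Lemma \ref{lem2.5}).

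Finally, for arbitrary $\mu\in S$, Lemma \ref{lem2.2} provides a nest $\{F_n\}$ with $\fch_{F_n}\cdot\mu\in S^{(0)}_0$. Since $\mu$ charges no $\EE$-exceptional set and $E\setminus\bigcup_nF_n$ is $\EE$-exceptional, the PCAF $\int_0^\cdot\fch_{E\setminus\bigcup_nF_n}(X_s)\,dA^\mu_s$ has Revuz measure $\fch_{E\setminus\bigcup_nF_n}\cdot\mu=0$, hence is identically $0$ $P_x$-a.s.\ for $\EE$-q.e.\ $x$; consequently $\int_0^t\fch_{F_n}(X_s)\,dA^\mu_s\uparrow A^\mu_t$ for $\EE$-q.e.\ $x$, and also $P_\nu$-a.s.\ because $\nu$ charges no $\EE$-exceptional set. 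I would then apply the previous step with $\mu$ replaced by $\fch_{F_n}\cdot\mu$ (noting $f\cdot(\fch_{F_n}\cdot\mu)=(f\fch_{F_n})\cdot\mu$) and let $n\to\infty$: the left-hand side increases to $E_\nu\int_0^\zeta f(X_t)\,dA^\mu_t$ and the right-hand side to $\langle f\cdot\mu,\widetilde{\hat U\nu}\rangle$, again by monotone convergence, which is (\ref{eq2.16}).

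I expect the step requiring the most care to be the monotone convergence of the additive functionals in the last paragraph, namely the facts that $\int_0^\cdot g(X_s)\,dA^\mu_s$ is a PCAF with Revuz measure $g\cdot\mu$ for $g$ a nonnegative Borel function, and that a PCAF with vanishing Revuz measure is $0$ q.e.; both follow from the one-to-one Revuz correspondence (\cite[Theorem VI.2.4]{MR}), if needed via the regular extension used in the proof of Lemma \ref{lem2.1}. Everything else reduces to the Lax--Milgram characterizations of $U\mu$ and $\hat U\nu$ recalled in Section \ref{sec2} and to routine applications of the monotone convergence theorem.
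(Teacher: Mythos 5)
Your proof is correct, and the overall skeleton (reduce to the finite $0$-order energy case via Lemma \ref{lem2.2}, then pass to the limit by monotone convergence, discarding the exceptional set $(\bigcup_nF_n)^c$ because $\mu$ and $\nu$ do not charge it) is the same as the paper's. The core identity is obtained differently, though. The paper stays at the level of $\alpha$-order potentials: it quotes the identification of $x\mapsto E_x\int_0^\zeta e^{-\alpha t}\fch_{F_n}f(X_t)\,dA^\mu_t$ with $\widetilde{U_\alpha(\fch_{F_n}f\cdot\mu)}$, uses the symmetry $\langle\widetilde{U_\alpha\lambda},\nu\rangle=\langle\lambda,\widetilde{\hat U_\alpha\nu}\rangle$, and then sends $\alpha\downarrow0$ using Lemma \ref{lem2.5} (weak $\EE$-convergence of $\hat U_\alpha\nu$ to $\hat U\nu$, which suffices because $\fch_{F_n}f\cdot\mu\in S^{(0)}_0$ acts as a continuous functional on $(\FF_e,\tilde\EE)$). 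You instead invoke the $0$-order identification $E_xA^\lambda_\zeta=\widetilde{U\lambda}(x)$ from Lemma \ref{lem4.1} and the Lax--Milgram duality $\langle\nu,\widetilde{U\lambda}\rangle=\EE(U\lambda,\hat U\nu)=\langle\lambda,\widetilde{\hat U\nu}\rangle$; since Lemma \ref{lem4.1} already packages the $\alpha\downarrow0$ limit, you avoid redoing it, at the cost of one extra truncation layer (you must first take $f$ bounded so that $f\cdot\mu\in S^{(0)}_0$, and you take the nest adapted to $\mu$ rather than to $f\cdot\mu$, which is marginally cleaner since smoothness of $f\cdot\mu$ for unbounded $f$ need not be addressed). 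Your closing care about why $\int_0^\cdot\fch_{F_n}(X_s)\,dA^\mu_s\uparrow A^\mu$ (a PCAF with vanishing Revuz measure vanishes q.e.) is exactly the point the paper compresses into its final sentence.
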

\begin{proof}
By Lemma \ref{lem2.2} there exists a nest $\{F_{n}\}$ such that
$\mathbf{1}_{F_{n}}|f|\cdot|\mu|\in S^{(0)}_{0}$. By \cite[Theorem
A.8]{MMS}, for every $\alpha>0$ the function $x\mapsto
E_{x}\int_{0}^{\zeta}e^{-\alpha t}\mathbf{1}_{F_{n}}f(X_{t})\,
dA^{\mu}_{t}$ is a quasi-continuous version of
$U_{\alpha}(\mathbf{1}_{F_{n}} f\cdot\mu)$. Hence,
\begin{equation}
\label{eq2.14} E_{\nu}\int_{0}^{\zeta}e^{-\alpha
t}\mathbf{1}_{F_{n}} f(X_{t})\,
dA^{\mu}_{t}=\langle\widetilde{U_{\alpha}
(\mathbf{1}_{F_{n}}f\cdot\mu)}, \nu\rangle =\langle
\mathbf{1}_{F_{n}} f\cdot\mu,\widetilde{\hat
U_{\alpha}\nu}\rangle.
\end{equation}
Letting $\alpha\downarrow0$ and applying the monotone convergence
theorem to the left-hand side of (\ref{eq2.14}) and  Lemma
\ref{lem2.5} to the right-hand side of (\ref{eq2.14}), we obtain
\begin{equation}
\label{eq2.15} E_{\nu}\int_{0}^{\zeta}\mathbf{1}_{F_{n}}
f(X_{t})\, dA^{\mu}_{t}=\langle
\mathbf{1}_{F_{n}}f\cdot\mu,\widetilde{\hat U\nu}\rangle.
\end{equation}
Letting $n\rightarrow\infty$ in (\ref{eq2.15}) yields
(\ref{eq2.15}) with $F_n$ replaced by $\bigcup^{\infty}_{n=1}F_n$,
which implies (\ref{eq2.16}) because
$(\bigcup^{\infty}_{n=1}F_n)^c$ is an exceptional set.
\end{proof}

\begin{lemma}
\label{lem2.8} Assume that $(\mathcal{E},D(\mathcal{E}))$ is
transient, $\mu_{1}\in S$,  $\mu_{2}\in \MM^{+}_{0,b}$. If
\[
E_{x}\int_{0}^{\zeta} dA^{\mu_{1}}_t \le E_{x}\int_{0}^{\zeta}
dA^{\mu_{2}}_t
\]
for $m$-a.e. $x\in E$ then $\|\mu_{1}\|_{TV}\le \|\mu_{2}\|_{TV}$.
\end{lemma}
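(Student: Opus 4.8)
The plan is to recover the total masses $\|\mu_i\|_{TV}=\mu_i(E)$ (recall $\mu_i\ge0$) from the potentials $u_i:=E_\cdot A^{\mu_i}_\zeta$ by testing them against a sequence of $0$-order co-equilibrium potentials increasing to $\fch$, and to exploit the pointwise inequality $u_1\le u_2$ through the duality (switching identity) of the Revuz correspondence.

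First I would upgrade the hypothesis to an $\EE$-q.e.\ inequality. Since $\mu_2\in\MM^{+}_{0,b}$ and $\EE$ is transient, $u_2=E_\cdot A^{\mu_2}_\zeta$ is $m$-a.e.\ finite (the potential of a bounded smooth measure is $m$-a.e.\ finite in the transient case), hence so is $u_1$ by hypothesis; by Lemma \ref{lem2.1} both $u_1$ and $u_2$ are then $\EE$-quasi-continuous, and therefore the inequality $u_1\le u_2$, which holds $m$-a.e., in fact holds $\EE$-q.e. Next I would establish a switching identity: for every $\nu\in S^{(0)}_{0}$, with $\hat u^{\nu}:=\lim_{\alpha\downarrow0}\widetilde{\hat U_\alpha\nu}$ the $0$-order co-potential of $\nu$,
\[
\int_E u_i\,d\nu=\int_E \hat u^{\nu}\,d\mu_i,\qquad i=1,2 .
\]
At the resolvent level this is immediate from the Lax--Milgram characterisations of $U_\alpha$ and $\hat U_\alpha$, which need only the weak sector condition: for $\rho,\nu\in S^{(0)}_{0}$ one has $\langle\widetilde{U_\alpha\rho},\nu\rangle=\EE_\alpha(U_\alpha\rho,\hat U_\alpha\nu)=\langle\rho,\widetilde{\hat U_\alpha\nu}\rangle$. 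Taking $\rho=\fch_{F_k}\cdot\mu_i\in S^{(0)}_{0}$ for a nest $\{F_k\}$ as in Lemma \ref{lem2.2}, recalling that $\widetilde{U_\alpha\rho}$ is the quasi-continuous version $x\mapsto E_x\int_0^\zeta e^{-\alpha t}\,dA^{\rho}_t$ (and similarly for $\widetilde{\hat U_\alpha\nu}$), and letting first $k\to\infty$ and then $\alpha\downarrow0$ by monotone convergence, one obtains the identity (both sides being a priori $[0,\infty]$-valued).

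Finally, pick an $\EE$-nest $\{K_n\}$ of compact sets (quasi-regularity) and let $e_n:=\hat u^{\nu_n}$ be the $0$-order co-equilibrium potential of $K_n$, where $\nu_n\in S^{(0)}_{0}$ is the corresponding co-equilibrium measure; then $0\le e_n\le1$, $e_n=1$ $\EE$-q.e.\ on $K_n$, and, $\{K_n\}$ being a nest, $e_n\uparrow\fch$ $\EE$-q.e. Since each $\nu_n$ charges no $\EE$-exceptional set and $u_1\le u_2$ $\EE$-q.e., the switching identity gives $\int_E e_n\,d\mu_1=\int_E u_1\,d\nu_n\le\int_E u_2\,d\nu_n=\int_E e_n\,d\mu_2$; letting $n\to\infty$ and using monotone convergence ($\mu_i\ge0$, charging no exceptional set) we conclude $\|\mu_1\|_{TV}=\mu_1(E)\le\mu_2(E)=\|\mu_2\|_{TV}$.

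I expect the main obstacle to be the second step: proving the switching identity outside the framework of the strong sector condition, and in particular justifying the passages to the limit when the measures involved are singular with respect to $m$ and the potentials are not $m$-integrable — the reduction to $\rho,\nu\in S^{(0)}_{0}$ via Lemma \ref{lem2.2} and the monotonicity in $\alpha$ and in the localising nest should make this routine but it requires care. A minor secondary point is the $m$-a.e.\ finiteness of $u_2$, needed to apply Lemma \ref{lem2.1}.
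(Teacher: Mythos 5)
Your route is genuinely different from the paper's. The paper disposes of this lemma by the transfer method: it passes to the regular extension $(\EE^{\#},D(\EE^{\#}))$ on $E^{\#}$ given by \cite[Theorem VI.1.2]{MR}, checks that $(A^{\mu})^{\#}=A^{\mu^{\#}}$ and that the hypothesis survives the transfer because $m^{\#}(E^{\#}\setminus E)=0$, and then simply quotes \cite[Lemma 5.4]{KR:JFA} for regular forms. Your proposal instead argues directly in the quasi-regular setting via a switching identity and co-equilibrium potentials. Your steps 1 and 2 are fine; in fact the obstacle you flag in step 2 is not really there: the resolvent equation gives $\hat U_{\alpha}\nu-\hat U_{\beta}\nu=(\beta-\alpha)\hat G_{\beta}\hat U_{\alpha}\nu\ge 0$ for $\alpha<\beta$, so $\widetilde{\hat U_{\alpha}\nu}$ increases as $\alpha\downarrow 0$ and the passage to the limit in $\langle\widetilde{U_{\alpha}(\fch_{F_k}\cdot\mu_i)},\nu\rangle=\langle\fch_{F_k}\cdot\mu_i,\widetilde{\hat U_{\alpha}\nu}\rangle$ is monotone on both sides, with no appeal to the strong sector condition or to Lax--Milgram on $\FF_e$.

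The genuine gap is in step 3. The lemma assumes only transience (plus the standing weak sector condition), and in that generality the objects you invoke --- a $0$-order co-equilibrium measure $\nu_n\in S^{(0)}_0$ for each $K_n$ whose co-potential $e_n=\hat u^{\nu_n}$ satisfies $0\le e_n\le 1$, $e_n=1$ q.e.\ on $K_n$, and $e_n\uparrow\fch$ q.e.\ along the nest --- are not known to exist, and you give no construction. The $0$-order (co-)equilibrium potential is obtained either by a projection/Stampacchia argument in $(\FF_e,\EE)$, which requires the strong sector condition to make $\EE$ coercive on the Hilbert space $\FF_e$ (this is exactly why the paper only defines $U\mu,\hat U\mu$ and the class $\hat S^{(0)}_{00}$ under that hypothesis, cf.\ Lemma \ref{lem2.10}), or probabilistically as a hitting probability of a dual process, which is not available for a general quasi-regular non-symmetric form. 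Letting $\alpha\downarrow 0$ in the $\alpha$-order co-equilibrium potentials of \cite[Chapter III]{MR} does not obviously repair this: one loses the uniform bound by $1$ on the relevant $0$-order co-potentials. So as written the argument proves the lemma only under the additional strong sector condition; to get the stated generality you would either need to supply the $0$-order co-equilibrium theory, or follow the paper and transfer the whole problem to a regular form where the required potential theory is classical.
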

\begin{proof}
Let $(\EE^{\#},D(\EE^{\#}))$, $\mu^{\#}$ be defined as in the
proof of Lemma \ref{lem2.1}, and let $(A^{\mu})^{\#}$ be defined
by (\ref{eqp.1}) with $A$ replaced by $A^\mu$. It is an elementary
check that $(A^{\mu})^{\#}=A^{\mu^{\#}}$.  By the assumptions and
since $m^{\#}(E^{\#}\setminus E)=0$,
\[
E^{\#}_{x}\int_{0}^{\zeta^{\#}}dA^{\mu_{1}^{\#}}_t \le
E^{\#}_{x}\int_{0}^{\zeta^{\#}}dA^{\mu_{2}^{\#}}_t
\]
for $m$-a.e. $x\in E^{\#}$. Clearly $\mu_{2}^{\#}\in
\mathcal{M}_{0,b}(E^{\#})$. Therefore $\|\mu_{1}^{\#}\|_{TV}\le
\|\mu_{2}^{\#}\|_{TV}$ by \cite[Lemma 5.4]{KR:JFA}, and hence
$\|\mu_{1}\|_{TV}\le \|\mu_{2}\|_{TV}$.
\end{proof}
\medskip

The following lemma is probably known, but we do not have a
reference.
\begin{lemma}
\label{lem2.10} Assume that $(\EE,D(\EE))$ is transient and
satisfies the strong sector condition. Let $B\in\BB(E)$. If
$\nu(B)=0$ for every $\nu\in S^{(0)}_{00}$ then $B$ is
$\EE$-exceptional.
\end{lemma}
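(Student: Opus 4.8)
The plan is to argue by contraposition: assuming that $B$ is \emph{not} $\EE$-exceptional, I would construct a measure $\nu\in S^{(0)}_{00}$ with $\nu(B)>0$. Since $\mathrm{Cap}_\varphi$ is a Choquet capacity (see \cite[Chapter III]{MR}), from $\mathrm{Cap}_\varphi(B)>0$ one may pick a compact set $K\subset B$ which is not $\EE$-exceptional; then, by standard properties of $1$-order equilibrium potentials and measures (see \cite[Chapter III]{MR}, or pass to the regular representation of $(\EE,D(\EE))$ exactly as in the proof of Lemma \ref{lem2.1} and use \cite{O}), there is a \emph{finite} measure $\mu\in S_0\subset S$, concentrated on $K$, with $\mu(K)>0$. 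Note that $\mu$ charges no $\EE$-exceptional set.

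The next step is to shrink $\mu$ in a few stages, each time preserving positive mass on $B$, until the result lies in $S^{(0)}_{00}$. By Lemma \ref{lem2.2} there is a nest $\{F_n\}$ with $\fch_{F_n}\cdot\mu\in S^{(0)}_0$; since $(\bigcup_nF_n)^c$ is $\EE$-exceptional and $\mu$ charges no exceptional set, $\mu(B\cap F_n)\uparrow\mu(B)>0$, so for large $n$ the finite measure $\nu_1:=\fch_{F_n}\cdot\mu$ satisfies $\nu_1\in S^{(0)}_0$ and $\nu_1(B)>0$. By Lemma \ref{lem4.1}, $u_1(x):=E_xA^{\nu_1}_\zeta$ is a quasi-continuous version of $U\nu_1$, hence $\EE$-q.e.\ finite. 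Using again that $\nu_1$ charges no exceptional set, one may fix $c>0$ with $\nu_1(B\cap\{u_1\le c\})>0$, and then, invoking quasi-continuity of $u_1$ (so that, after intersecting with a nest of compacts as in the proof of Lemma \ref{lem4.1}, $u_1$ is continuous on each member of a nest $\{G_j\}$ of compact sets), fix $j$ with $\nu_1(B\cap D)>0$, where $D:=\{u_1\le c\}\cap G_j$ is a \emph{compact} set. Put $\nu:=\fch_D\cdot\nu_1$; as a Borel restriction of $\nu_1$ it belongs to $S^{(0)}_0$, it is finite, and $\nu(B)>0$.

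It then remains to bound $U\nu$. Let $\tilde u(x)=E_xA^\nu_\zeta$, which is the quasi-continuous version of $U\nu$ by Lemma \ref{lem4.1}. Since $A^\nu_t=\int_0^t\fch_D(X_s)\,dA^{\nu_1}_s\le A^{\nu_1}_t$, we have $\tilde u\le u_1$ everywhere, in particular $\tilde u\le c$ on $D$. For $x\notin D$ set $\sigma_D=\inf\{t>0:X_t\in D\}$; because $D$ is closed and $\BX$ has right-continuous paths, $\sigma_D>0$ and $A^\nu_{\sigma_D}=0$ $P_x$-a.s., while $X_{\sigma_D}\in D$ on $\{\sigma_D<\zeta\}$. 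Hence the strong Markov property gives $\tilde u(x)=E_x[\tilde u(X_{\sigma_D});\sigma_D<\zeta]\le c$. Thus $\|U\nu\|_\infty\le c<\infty$, so $\nu\in S^{(0)}_{00}$ while $\nu(B)>0$, contradicting the hypothesis; therefore $B$ is $\EE$-exceptional.

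I expect the heart of the matter to be the second and third steps: turning an \emph{arbitrary} finite measure in $S^{(0)}_0$ charging $B$ into one whose $0$-order potential is bounded without destroying its mass on $B$, which is where Lemmas \ref{lem2.2} and \ref{lem4.1}, the nest structure, and the Markov property of $\BX$ must be combined. The input that a non-exceptional compact set carries a nonzero finite measure of finite energy should be quoted precisely; and in the last step one has to be slightly careful about the hitting time of the closed set $D$ and about $X_{\sigma_D}\in D$ on $\{\sigma_D<\zeta\}$ — if preferred, that computation can be performed on the regular Hunt representation $\BX^{\#}$ used in the proofs of Lemmas \ref{lem2.1} and \ref{lem2.8} and then transferred back.
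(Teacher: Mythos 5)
Your argument is correct, but it takes a genuinely different route from the paper. The paper proves Lemma \ref{lem2.10} by the transfer method: it passes to the regular extension $(\EE^{\#},D(\EE^{\#}))$ of \cite[Theorem VI.1.2]{MR}, checks (and this is the bulk of its proof) that every $\nu^{\#}\in S^{(0)}_{00}(E^{\#})$ restricts to an element of $S^{(0)}_{00}(E)$, so that the hypothesis forces $\nu^{\#}(B)=0$ for all such $\nu^{\#}$, and then quotes the $0$-order version of \cite[Theorem 2.2.3]{FOT} on $E^{\#}$ before pulling the capacity statement back to $E$ via \cite[Corollary VI.1.4]{MR}. You instead reprove that FOT theorem directly in the quasi-regular setting: contraposition, a nonzero finite-energy measure on a compact non-exceptional $K\subset B$, restriction to a nest member via Lemma \ref{lem2.2} to land in $S^{(0)}_0$, restriction to a compact sublevel set of the potential, and the hitting-time/maximum-principle argument ($\tilde u(x)=E_x[\tilde u(X_{\sigma_D});\sigma_D<\zeta]\le c$, using $X_{\sigma_D}\in D$ by right-continuity and closedness of $D$) to force $\|U\nu\|_\infty\le c$. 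All steps check out, including $A^{\nu}_t=\int_0^t\fch_D(X_s)\,dA^{\nu_1}_s$ and the compactness of $D=\{u_1\le c\}\cap G_j$. What your approach buys is transparency about why $S^{(0)}_{00}$ is rich enough to detect non-exceptional sets, at the cost of needing two potential-theoretic inputs you correctly flag as requiring precise citation: capacitability (a non-exceptional Borel set contains a non-exceptional compact) and the existence of a nonzero finite measure in $S_0$ carried by such a compact. In the non-symmetric quasi-regular setting both of these are themselves most easily obtained by the very transfer to the regular representation that the paper uses globally, so the two proofs ultimately touch the same machinery, just packaged at different points; the paper's version is shorter because it delegates the whole construction to \cite[Theorem 2.2.3]{FOT} at once.
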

\begin{proof}
Let $(\EE^{\#},D(\EE^{\#}))$ be the regular extension of
$(\tilde\EE,D(\EE))$ specified in \cite[Theorem VI.1.2]{MR}. Let
$\nu^{\#}$ be  a smooth measure on $\BB(\EE^{\#})$ and let
$\nu={\nu^{\#}}_{|\BB(E)}$. If $A\in\BB(E)$ is $\EE$-exceptional
then by \cite[Corollary VI.1.4]{MR}, $A$ is
$\EE^{\#}$-exceptional, and hence $\nu(A)=\nu^{\#}(A)=0$.
Moreover, if $\{F_k\}$ is a nest in $E^{\#}$ such that
$\nu^{\#}(F_k)<\infty$ for $k\in\BN$ and $\{E_k\}$ is a nest in
$E$ as in \cite[Theorem VI.1.2]{MR}, then $\{F_k\cap E_k\}$ is an
$\EE$-nest of compact sets in $E$ such that $\nu(F_k\cap
E_k)<\infty$, $k\in\BN$. Thus $\nu$ is a smooth measure on
$\BB(E)$. If moreover $\nu^{\#}\in S^{(0)}_{00}(E^{\#})$ then
$\nu(E)<\infty$ and, for $\eta\in D(\EE)$,
\[
\langle\nu,\tilde\eta\rangle=\langle\nu^{\#},\tilde\eta\rangle\le
c\EE^{\#}(\eta,\eta)^{1/2}=c\EE(\eta,\eta)^{1/2}.
\]
From this in the same manner as in the proof of Lemma \ref{lem2.2}
one can deduce that $\langle\nu,\tilde\eta\rangle\le
c\EE(\eta,\eta)^{1/2}$ for $\eta\in\FF_e$, i.e. that $\nu\in
S^{(0)}_0$. From Lemma \ref{lem4.1} and the fact that
$A^{\nu^{\#}}=(A^{\nu})^{\#}$ it  follows now that
${U\nu^{\#}}_{|E}$ is an $m$-version of $U\nu$. Therefore
$\|U\nu\|_{\infty}<\infty$, which proves that $\nu\in
S^{(0)}_{00}$. Hence $\nu(B)=0$, and consequently
$\nu^{\#}(B)=\nu(B)=0$ for every $\nu^{\#}$ in
$S^{(0)}_{00}(E^{\#})$. Therefore from the 0-order version of
\cite[Theorem 2.2.3]{FOT} (see remark following \cite[Corollary
2.2.2]{FOT}) we conclude that $\mbox{Cap}^{\#}_{1,1}(B)=0$, where
$\mbox{Cap}^{\#}_{1,1}$ denotes the capacity relative to
$(\EE^{\#},D(\EE^{\#}))$ defined in \cite[Definition III.2.4]{MR}
(see also \cite[Exercise III.2.10]{MR}). Hence
$\mbox{Cap}^{\#}_{\varphi}(B)=0$ by \cite[Proposition VI.1.5]{MR},
and consequently $\mbox{Cap}_{\varphi}(B)=0$ by \cite[Corollary
VI.1.4]{MR} ($\mbox{Cap}_{\varphi}$ is defined by (\ref{eq2.4})).
By remark following (\ref{eq2.4})  this implies that $B$ is
$\EE$-exceptional.
\end{proof}

\section{Probabilistic solutions} \label{sec3}

In this section we assume that $(\EE,D(\EE))$ is a quasi-regular
Dirichlet form on $L^2(E;m)$. We will need the following
assumptions on $f$ from the right-hand side of (\ref{eq1.1}):
\begin{enumerate}
\item[(A1)]$f:E\times\BR\rightarrow\BR$ is measurable and
$y\mapsto f(x,y)$ is continuous for every $x\in E$,
\item[(A2)]$(f(x,y_{1})-f(x,y_{2}))(y_{1}-y_{2})\le 0$ for all
$y_{1}, y_{2}\in \BR$ and $x\in E$,
\item[(A3)]$f(\cdot,y)\in L^1(E;m)$ for every $y\in\BR$,
\item[(A4)]$\mu\in\MM_{0,b}$,
\end{enumerate}
and
\begin{enumerate}
\item[(A3${}^*$)]for every $y\in\BR$ the function $f(\cdot,y)$
is quasi-$L^1$ with respect to $(\EE,D(\EE))$, i.e. $t\mapsto
f(X_t,y)$ belongs to $L^1_{loc}(\BR_{+})$ $P_x$-a.s. for q.e.
$x\in E$,
\item[(A4${}^*$)]$E_x\int^{\zeta}_0|f(X_t,0)|\,dt<\infty$,
$E_x\int^{\zeta}_0d|A^{\mu}|_t<\infty$ for q.e. $x\in E$.
\end{enumerate}

Note that  in our previous paper  \cite{KR:JFA} devoted to
equations of the form (\ref{eq1.1}) we followed \cite{BBGGPV} in
assuming that $f$ satisfies (A1), (A2), (A4) and the following
condition: for every $r>0$, $F_r\in L^1(E;m)$, where
$F_r(x)=\sup_{|y|\le r}|f(x,y)|$. Obviously  (A3) is weaker than
the last condition. Likewise, (A3${}^*$) is weaker than the
corresponding condition (A3$'$) in \cite{KR:JFA} saying that for
every $r>0$ the function $t\mapsto F_r(X_t,y)$ belongs to
$L^1_{loc}(\BR_+)$ $P_x$-a.s. for q.e. $x\in E$. Observe, however,
that (A3) together with (A1), (A2)  imply that $F_r\in L^1(E;m)$.
Likewise, (A3${}^*$) together with (A1), (A2), imply condition
(A3$'$) from \cite{KR:JFA}.

Define the co-potential operator as
\[
\hat G\phi=\lim_{n\rightarrow\infty}\hat G_{1/n}\phi,\quad \phi\in
L^1(E;m),\,\phi\ge0
\]
and for  $\mu\in S$ set
\[
R\mu(x)=E_x\int^{\zeta}_0dA^{\mu}_t,\quad x\in E.
\]
\begin{lemma}
If $(\EE,D(\EE))$ is transient then for any $\mu\in S$ and
$\phi\in L^1(E;m)$ such that $\phi\ge0$ we have
\begin{equation}
\label{eq3.01} (R\mu,\phi)=\langle\mu,\widetilde{\hat
G\phi}\rangle.
\end{equation}
\end{lemma}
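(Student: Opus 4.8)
The plan is to prove first the $\alpha$-regularized identity
\[
(R_\alpha\mu,\phi)=\langle\mu,\widetilde{\hat G_\alpha\phi}\rangle,\qquad \alpha>0,
\]
where $R_\alpha\mu(x)=E_x\int_0^\zeta e^{-\alpha t}\,dA^\mu_t$ and $\hat G_\alpha$ is the coresolvent, and then to let $\alpha\downarrow0$. Since $\mu\in S$ is a positive measure and $\phi\ge0$, every quantity below lies in $[0,\infty]$, so monotone convergence may be used freely. As $e^{-\alpha t}\uparrow1$ we have $R_\alpha\mu\uparrow R\mu$ pointwise, hence $(R_\alpha\mu,\phi)\uparrow(R\mu,\phi)$; and $\hat G_\alpha\phi\uparrow\hat G\phi$ $m$-a.e., the limit being $m$-a.e.\ finite by transience. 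Thus, once the $\alpha$-identity is in hand, \eqref{eq3.01} follows provided the quasi-continuous versions can be chosen so that $\widetilde{\hat G_\alpha\phi}\uparrow\widetilde{\hat G\phi}$ \emph{$\EE$-q.e.}\ (not merely $m$-a.e.), for then monotone convergence applies on the right-hand side, $\mu$ charging no $\EE$-exceptional set.

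For the $\alpha$-identity I would argue as in the proof of Lemma~\ref{lem2.7}. Given $\mu\in S$, by Lemma~\ref{lem2.2} there is a nest $\{F_n\}$ with $\mu_n:=\fch_{F_n}\cdot\mu\in S^{(0)}_0\subset S_0$, and $A^{\mu_n}\uparrow A^\mu$. For $\phi\in L^2(E;m)$, $\phi\ge0$, the function $R_\alpha\mu_n=E_\cdot\int_0^\zeta e^{-\alpha t}\,dA^{\mu_n}_t$ is a quasi-continuous $m$-version of $U_\alpha\mu_n$ by \cite[Theorem~A.8]{MMS}, so using the defining relation for $U_\alpha\mu_n$ with test function $v=\hat G_\alpha\phi\in D(\EE)$ together with the identity $\EE_\alpha(u,\hat G_\alpha\phi)=(u,\phi)$ for $u\in D(\EE)$ gives
\[
(R_\alpha\mu_n,\phi)=\EE_\alpha(U_\alpha\mu_n,\hat G_\alpha\phi)=\langle\mu_n,\widetilde{\hat G_\alpha\phi}\rangle .
\]
Letting $n\to\infty$ (monotone convergence on both sides, via $A^{\mu_n}\uparrow A^\mu$, $\mu_n\uparrow\mu$, $\widetilde{\hat G_\alpha\phi}\ge0$ q.e.\ and $\mu(N)=0$ for exceptional $N$) yields the $\alpha$-identity for $\mu\in S$ and $\phi\in L^2\cap\BB^+$. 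Approximating an arbitrary $\phi\in L^1(E;m)$, $\phi\ge0$, from below by $\phi_k\in L^1\cap L^2$, $\phi_k\ge0$, and passing to the limit (again monotone convergence on both sides) extends it to all $\phi\in L^1(E;m)$, $\phi\ge0$.

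The point requiring care — and, I expect, the main obstacle — is exactly the passage from $m$-a.e.\ monotone convergence of $\hat G_\alpha\phi_k$ and of $\hat G_\alpha\phi$ (as $\alpha\downarrow0$) to $\EE$-q.e.\ monotone convergence of their quasi-continuous versions, which is what feeds the two monotone-convergence steps paired against $\mu$. I would settle this using the Markov process $\hat\BX$ properly associated with the dual form $(\hat\EE,D(\EE))$, $\hat\EE(u,v)=\EE(v,u)$, which is again a quasi-regular Dirichlet form with the same (transient) symmetric part as $\EE$ (see \cite{MR}): then $\hat R_\alpha\phi(x)=E^{\hat\BX}_x\int_0^\infty e^{-\alpha t}\phi(\hat X_t)\,dt$ is a quasi-continuous $m$-version of $\hat G_\alpha\phi$ for every $\phi\in L^1(E;m)$, $\phi\ge0$; it increases \emph{everywhere} to $\hat R_0\phi(x)=E^{\hat\BX}_x\int_0^\infty\phi(\hat X_t)\,dt$ as $\alpha\downarrow0$, and likewise $\hat R_\alpha\phi_k\uparrow\hat R_\alpha\phi$ everywhere as $k\to\infty$. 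By transience and Lemma~\ref{lem2.1} applied to $\hat\BX$, the function $\hat R_0\phi$ is an $\EE$-q.e.\ finite, quasi-continuous $m$-version of $\hat G\phi$. Choosing $\widetilde{\hat G_\alpha\phi}=\hat R_\alpha\phi$ and $\widetilde{\hat G\phi}=\hat R_0\phi$ throughout, all the monotone-convergence steps above become genuine increasing limits, and \eqref{eq3.01} follows.

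An alternative, essentially equivalent, route would bypass the $\alpha$-regularization: start from $(R\mu,\phi)=E_{\phi\cdot m}[A^\mu_\zeta]$, use the integrated form of the Revuz correspondence \eqref{eq2.2} to write $E_{\phi\cdot m}\int_0^t dA^\mu_s=\int_0^t\langle\mu,\widetilde{\hat T_s\phi}\rangle\,ds$, let $t\to\infty$, and interchange $\int_0^\infty ds$ with $\int_E\mu(dx)$ by Tonelli, recognizing $\int_0^\infty\widetilde{\hat T_s\phi}\,ds=\hat R_0\phi$ as a quasi-continuous version of $\hat G\phi$ as above. The delicate step is identical — ensuring the object $\int_0^\infty\widetilde{\hat T_s\phi}\,ds$, a priori only an $m$-version of $\hat G\phi$, is genuinely quasi-continuous — so I would organize the write-up around the first approach, which stays closer to the machinery already developed in Section~\ref{sec2}.
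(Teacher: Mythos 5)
Your proposal is correct and follows essentially the same route as the paper: reduce to $\fch_{F_n}\cdot\mu\in S^{(0)}_0$ via Lemma \ref{lem2.2}, establish $(R_\alpha(\fch_{F_n}\cdot\mu),\phi)=\langle\fch_{F_n}\cdot\mu,\widetilde{\hat G_\alpha\phi}\rangle$ by the resolvent/co-resolvent duality for $\phi\in L^1\cap L^2$, and then pass to the limits in $\phi_k\uparrow\phi$, $\alpha\downarrow0$ and $n\to\infty$ by monotone convergence (the order of the last two limits is immaterial). Your additional care in fixing quasi-continuous versions $\widetilde{\hat G_\alpha\phi}=\hat R_\alpha\phi$ via the dual process so that the monotone convergence holds $\EE$-q.e.\ rather than merely $m$-a.e.\ addresses a point the paper's proof leaves implicit, and is a welcome refinement rather than a deviation.
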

\begin{proof}
By Lemma \ref{lem2.2} there is a nest $\{F_n\}$  such that
$\fch_{F_n}\cdot\mu\in S^{(0)}_0$ for each $n\in\BN$. Let
\[
R_{\alpha}(\fch_{F_n}\cdot\mu)(x)=E_x\int^{\zeta}_0e^{-\alpha
t}\fch_{F_n}(X_t)\,dA^{\mu}_t,\quad \alpha>0,\quad x\in E.
\]
Since $R_{\alpha}(\fch_{F_n}\cdot\mu)$ is an $m$-version of
$U_{\alpha}(\fch_{F_n}\cdot\mu)$, for any nonnegative $\phi$ in
$L^1(E;m)\cap L^2(E;m)$ we have
\[
\langle\fch_{F_n}\cdot\mu,\widetilde{\hat G_{\alpha}\phi}\rangle
=\EE_{\alpha}(U_{\alpha}(\fch_{F_n}\cdot\mu),\hat
U_{\alpha}\phi)=\EE_{\alpha}(R_{\alpha}(\fch_{F_n}\cdot\mu),\hat
U_{\alpha}\phi).
\]
Hence,
\begin{equation}
\label{eq3.02} \langle\fch_{F_n}\cdot\mu,\widetilde{\hat
G_{\alpha}\phi}\rangle=(R_{\alpha}(\fch_{F_n}\cdot\mu),\phi).
\end{equation}
In fact, approximating $\phi\in L^1(E;m)$ by a sequence
$\{\phi_k\}\subset L^1(E;m)\cap L^2(E;m)$ such that
$0\le\phi_k\nearrow\phi$ yields (\ref{eq3.02}) for any $\phi\in
L^1(E;m)$ such that $\phi\ge0$. Finally, letting
$\alpha\downarrow0$ and then $n\rightarrow\infty$ in
(\ref{eq3.02}) gives (\ref{eq3.01}).
\end{proof}
\medskip

Let $\RR$ be defined by (\ref{eq1.03}). If $\mu$ is smooth and
$R|\mu|<\infty$ $m$-a.e. then from (\ref{eq3.01}) and the fact
that $m$ is $\sigma$-finite it follows that $\mu\in\mathcal{R}$.
Furthermore, if $\mu\in\mathcal{R}$ then by (\ref{eq3.01}),
$R|\mu|<\infty$ $m$-a.e. Thus $\mathcal{R}$ can be equivalently
defined as
\[
\RR=\{\mbox{$\mu:\mu$ is smooth, $R|\mu|<\infty$, $m$-a.e.}\}.
\]
It follows in particular that (A4${}^*$) is satisfied iff
$f(\cdot,0)\cdot m\in\mathcal{R}$ and $\mu\in\mathcal{R}$.

\begin{proposition}
\label{prop2.2} If $(\EE,D(\EE))$ is transient then
$\MM_{0,b}\subset\RR$.
\end{proposition}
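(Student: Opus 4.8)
The goal is to show that every $\mu\in\MM_{0,b}$ lies in $\RR$; by the equivalent description of $\RR$ established just above, it suffices to prove that $R|\mu|<\infty$ $m$-a.e. Since $|\mu|\in\MM^+_{0,b}\subset S$ and $R$ is positively homogeneous and monotone, we may assume $\mu\ge 0$ and $\|\mu\|_{TV}<\infty$. The plan is to produce a function $\phi\in L^1(E;m)$ with $\phi>0$ $m$-a.e. for which $(R\mu,\phi)<\infty$, and then conclude $R\mu<\infty$ $m$-a.e. because $\phi$ is strictly positive.

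First I would fix $\phi\in L^1(E;m)$ with $0<\phi\le 1$ $m$-a.e.\ (such $\phi$ exists since $m$ is $\sigma$-finite). By the identity \eqref{eq3.01} of the lemma preceding the proposition,
\[
(R\mu,\phi)=\langle\mu,\widetilde{\hat G\phi}\rangle.
\]
So everything reduces to bounding $\langle\mu,\widetilde{\hat G\phi}\rangle$. Here is where transience enters: for the co-resolvent $(\hat G_\alpha)$ one has the analogue of \eqref{eq2.3} for the dual (transience depends only on the symmetric part $\tilde\EE$, which is self-dual), giving an $m$-a.e.\ strictly positive bounded $g\in L^1(E;m)$ with $\int_E |v|\,g\,dm\le \hat\EE(v,v)^{1/2}$ for $v$ in the dual extended Dirichlet space. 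More simply, since $0<\phi\le 1$, the co-potential $\hat G\phi=\lim_n \hat G_{1/n}\phi$ is bounded: indeed $\hat G_\alpha$ is sub-Markovian on $L^1$, and transience of the semigroup $(\hat T_t)$ — equivalently of $(T_t)$ by \cite[Corollary 3.5.34]{J2} — ensures $\hat G\phi=\int_0^\infty \hat T_t\phi\,dt<\infty$ $m$-a.e.; combined with $\phi\le 1$ and a standard maximum-principle/resolvent argument one gets $\|\hat G\phi\|_\infty<\infty$. (Alternatively: choose $\phi$ from the outset of the form $\phi=c\,\hat T_1 g_0$ for a suitable $g_0$, or invoke that $\hat G(\kappa\,m)$ is bounded for a suitable finite measure; the point is only to exhibit one strictly positive $\phi\in L^1$ with bounded co-potential.)

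Granting $\|\widetilde{\hat G\phi}\|_\infty<\infty$, we immediately get
\[
(R\mu,\phi)=\langle\mu,\widetilde{\hat G\phi}\rangle\le \|\widetilde{\hat G\phi}\|_\infty\,\|\mu\|_{TV}<\infty,
\]
using $\mu\ge 0$ and $\|\mu\|_{TV}<\infty$. Hence $R\mu\cdot\phi\in L^1(E;m)$, so $R\mu<\infty$ $m$-a.e.\ on $\{\phi>0\}=E$ (up to an $m$-null set). By the equivalent description $\RR=\{\mu:\mu\text{ smooth},\ R|\mu|<\infty\ m\text{-a.e.}\}$ noted above, and since $|\mu|$ is smooth and $R|\mu|<\infty$ $m$-a.e., we conclude $\mu\in\RR$; moreover this $\phi$ witnesses directly that $\hat G\phi\cdot\mu\in\MM_{0,b}$ in \eqref{eq1.03}, since $\hat G\phi$ is bounded and $\mu\in\MM_{0,b}$.

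The main obstacle is the middle step: verifying rigorously that one can choose a strictly positive $\phi\in L^1(E;m)$ whose co-potential $\hat G\phi$ is bounded (and quasi-continuous with a bounded quasi-continuous version). The existence of $\phi\in L^1$ strictly positive is trivial, but boundedness of $\hat G\phi$ requires care — it uses transience of $(\EE,D(\EE))$ in an essential way, applied to the co-form, together with the sub-Markovian property to control the sup-norm. If a clean reference for $\|\hat G\phi\|_\infty<\infty$ for some strictly positive $\phi\in L^1$ is not at hand, the cleanest route is: take $\phi_0\in L^1\cap L^2$ with $0<\phi_0\le 1$, set $\phi=\hat T_1\phi_0$ (still in $L^1$, still $m$-a.e.\ positive by irreducibility-free positivity of the semigroup, and $\le 1$), and note $\hat G\phi=\int_1^\infty \hat T_t\phi_0\,dt\le \hat G\phi_0$; then transience gives finiteness and a resolvent equation $\hat G\phi=\hat G_1\phi+\hat G_1(\hat G\phi)$-type bound yields $\|\hat G\phi\|_\infty\le \|\hat G_1\|_{L^\infty\to L^\infty}(\cdots)<\infty$. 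The rest of the argument is then routine.
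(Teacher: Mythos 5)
Your overall strategy is essentially the paper's: the published proof is a one‑line citation of \cite[Corollary 1.3.6]{O} applied to the dual form, and what that corollary supplies is precisely the object your argument hinges on, namely a strictly positive $\phi\in L^1(E;m)$ with $\|\hat G\phi\|_{\infty}<\infty$. Once such a $\phi$ is granted, your conclusion is correct and can even be shortened: $\hat G\phi\cdot\mu\in\MM_{0,b}$ for every $\mu\in\MM_{0,b}$, so $\mu\in\RR$ directly by the definition \eqref{eq1.03}; the detour through $R|\mu|$ and \eqref{eq3.01} is also fine.

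The problem is that the step you yourself flag as the main obstacle is a genuine gap, and none of the justifications you sketch for it is valid. It is \emph{false} that an arbitrary $\phi\in L^1(E;m)$ with $0<\phi\le1$ has bounded (co‑)potential, even for a regular transient symmetric Dirichlet form. Take $E=\BN$ with $m(\{n\})=n^{-2}$ and the pure‑killing form $\EE(u,v)=\sum_{n}n^{-3}u(n)v(n)$ on $L^2(E;m)$; then $T_tu(n)=e^{-t/n}u(n)$, the form is transient (a reference function as in \eqref{eq2.3} is $g(n)=cn^{-1/2}$), yet $G\fch(n)=n$ is unbounded although $\fch\in L^1\cap L^{\infty}$. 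Transience only gives $\hat G\phi<\infty$ $m$-a.e.; there is no ``maximum‑principle/resolvent argument'' upgrading this to $\|\hat G\phi\|_{\infty}<\infty$ for a generic $\phi$. Your alternatives fail for the same reason: $\hat G(\hat T_1\phi_0)\le\hat G\phi_0$ merely reduces the claim to itself, and the resolvent identity $\hat G\phi=\hat G_1\phi+\hat G_1\hat G\phi$ yields only the circular bound $\|\hat G\phi\|_{\infty}\le\|\phi\|_{\infty}+\|\hat G\phi\|_{\infty}$. What is actually required is the construction behind \cite[Theorem 1.5.1]{FOT} and \cite[Corollary 1.3.6]{O}: starting from $\hat G\psi<\infty$ a.e.\ for some integrable $\psi>0$, one exhausts $E$ (up to an $m$-null set) by sets $B_n$ on which $\hat G\psi\le n$, invokes the maximum principle for potentials to get $\|\hat G(\fch_{B_n}\psi)\|_{\infty}<\infty$, and then sets $\phi=\sum_nc_n\fch_{B_n}\psi$ with $c_n>0$ decaying fast enough. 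Without this construction (or the citation that encapsulates it, applied to the co‑form), your proof is incomplete at its only nontrivial point.
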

\begin{proof}
Apply \cite[Corollary 1.3.6]{O} to the dual form
$(\hat\EE,D(\EE))$.
\end{proof}
\medskip

In general the inclusion in Proposition \ref{prop2.2} is strict.
To see this let us consider the classical form
\begin{equation}
\label{eq3.03} \BD(u,v)=\frac12\int_D\langle\nabla u,\nabla
v\rangle_{\BR^d}\,dx,\quad u,v\in H^1_0(D)
\end{equation}
on $L^2(D;dx)$, where $D$ is a bounded open subset of $\BR^d$. If
$d\ge3$ and $D$ has smooth boundary then $R1$ is a continuous
strictly positive function such that $R1(x)\approx\delta (x)$ for
$x\in D$, where $\delta(x)=\mbox{dist}(x,\partial D)$ (for the
last property see \cite[Proposition 4.9]{Ku}). Since $R1$ is an
$m$-version of $G1=\hat G1$, it follows that
$L^1(D;\delta(x)\,dx)\in\mathcal{R}$, so $\mathcal{R}$ contains
positive Radon measures of infinite total variation. Elliptic and
parabolic equations with right-hand side in $L^q(D;\delta(x)\,dx)$
$(q\ge1)$  are studied for instance in \cite{FSW}.

\begin{remark}
Assume that $(\EE,D(\EE))$ is transient. Then by Lemma
\ref{lem2.1} and Proposition \ref{prop2.2},  (A3) implies
(A3${}^*$) and (A4) implies (A4${}^*$).
\end{remark}

\subsection{BSDEs}

Let $(\Omega,(\FF_t)_{t\ge0},P)$  be  a filtered probability
space. We will need the following classes of processes defined on
$\Omega$.

$\DD$ is the space of all $(\FF_t)$-progressively measurable
c\`adl\`ag processes, and $\DD^q$, $q>0$, is the subspace of $\DD$
consisting of all processes $Y$ such that
$E\sup_{t\ge0}|Y_t|^q<\infty$.

$\MM$ (resp. $\MM_{loc}$) is the space of all c\`adl\`ag
$((\FF_t),P)$-martingales (resp. local martingales) $M$ such that
$M_0=0$. $\MM^2$ is the subspace of $\MM$ consisting of all
martingales such that $E[M]_{\infty}<\infty$.

We  say that a c\`adl\`ag $(\FF_t)$-adapted process $Y$ is of
Doob's class (D)  if the collection $\{Y_{\tau},\tau\in\TT\}$,
where $\TT$ is the set a finite valued $(\FF_t)$-stopping times,
is uniformly integrable. For a process $Y$ of class (D)  we set
$\|Y\|_{1}=\sup\{E|Y_{\tau}|,\tau\in\TT\}$.

In the present subsection $\xi$ is an $\FF_T$-measurable random
variable, $\zeta$ is an $(\FF_t)$-stoping time, $V$ is a
continuous $(\FF_t)$-adapted  finite variation process  such that
$V_0=0$ and $f:[0,\infty)\times\Omega\times\BR\rightarrow\BR$ is a
measurable function  such that $f(\cdot,y)$ is
$(\FF_t)$-progressively measurable process for every $y\in\BR$
(for brevity in notation we omit the dependence of $f$ on
$\omega$).

\begin{definition}
We say that  a pair $(Y,M)$ of processes  is a solution of the
backward stochastic differential equation on $[0,T]$ with terminal
condition $\xi$ and coefficient $f+dV$ (BSDE$(\xi,f+dV)$ for
short) if
\begin{enumerate}
\item[(a)] $Y\in\DD$, $Y$ is of class (D)  and $M\in\MM_{loc}$,
\item[(b)] the mapping $[0,T]\ni t\mapsto f(t,Y_t)$ belongs to
$L^1(0,T)$ $P$-a.s. and
\[
Y_t=\xi+\int^{T}_{t}f(r,Y_r)\,dr +\int^{T}_{t}dV_r
-\int^{T}_{t}dM_r,\quad t\in[0,T],\quad P\mbox{-a.s.}
\]
\end{enumerate}
\end{definition}

\begin{definition}
We say that  a pair $(Y,M)$ is a solution of the backward
stochastic differential equation with terminal condition 0 at
terminal time $\zeta$ and coefficient $f+dV$
(BSDE${}^{\zeta}(f+dV)$ for short) if
\begin{enumerate}
\item[(a)]$Y\in\DD$, $Y_{t\wedge\zeta}\rightarrow0$ $P$-a.s. as
$t\rightarrow\infty$, $Y$ is of class (D)  and $M\in\MM_{loc}$,
\item[(b)]for every $T>0$, $[0,T]\ni t\mapsto f(t,Y_t)$ belongs
to $L^1(0,T)$ $P$-a.s. and
\[
Y_t=Y_{T\wedge\zeta}+\int^{T\wedge\zeta}_{t\wedge\zeta}f(r,Y_r)\,dr
+\int^{T\wedge\zeta}_{t\wedge\zeta}dV_r
-\int^{T\wedge\zeta}_{t\wedge\zeta}dM_r,\quad t\in[0,T],\quad
P\mbox{-a.s.}
\]
\end{enumerate}
\end{definition}

Let us consider the following hypotheses:
\begin{enumerate}
\item [(H1)] For every $t\in [0,T]$ the function
$\BR\ni y\mapsto f(t,y)$ is continuous $P$-a.s.
\item [(H2)] For every $t\in [0,T]$ the function $\BR\ni y\mapsto f(t,y)$
is $P$-a.s. nondecreasing.
\item [(H3)] For every $y\in\BR$ the function
$[0,T]\ni t\mapsto f(t,y)$ belongs to $ L^1(0,T)$ $P$-a.s.
\end{enumerate}

\begin{remark}
\label{uw3.100} The following Theorems \ref{th3.0} and \ref{th3.1}
were stated in \cite{KR:JFA} (see Theorems 2.7 and 3.4 there).
Unfortunately, there are some gaps in the proofs of these results
in \cite{KR:JFA}.  Namely, in the proof of \cite[Theorem
2.7]{KR:JFA} we applied Lemma \cite[Lemma 2.6]{KR:JFA}, which is
true, but its proof is correct under the additional assumption
that the coefficient $f$ is bounded from below by some linear
function  with respect to $y$ (otherwise the function $f_n$
appearing in the proof is not well defined). Secondly, in the
proof of \cite[Theorem 3.4]{KR:JFA} we applied \cite[Lemma
2.5]{KR:JFA}, which is correct only for $p\ge2$ or under the
additional assumption that the solution $(Y,M)$ is continuous (the
reason for this is that in the proof of \cite[Lemma 2.5]{KR:JFA}
we used the Burkholder-Davis-Gundy inequality with exponent
$p/2$). Here we give the proofs of \cite[Theorem 2.7]{KR:JFA} and
\cite[Theorem 3.4]{KR:JFA} in full generality.
\end{remark}

In what follows we denote by $T_c$, $c\ge0$, the truncation
operator, i.e.
\begin{equation}
\label{eq3.15} T_c(x)=(-c)\vee x\wedge c,\quad x\in E.
\end{equation}

\begin{lemma}
\label{lm3.111} Assume that \mbox{\rm{(H1)--(H3)}} are satisfied
and there exists $c>0$ such that
\[
T\cdot\sup_{0\le t\le T}|f(t,0)|+|V|_T+|\xi|\le c.
\]
Then there exists a unique solution $(Y,M)\in \DD^2\otimes\MM^2$
of BSDE$(\xi,f+dV)$.
\end{lemma}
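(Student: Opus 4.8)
The plan is to split the argument into an a priori bound, an existence proof by approximation, and a monotonicity‑based uniqueness proof. First I would establish the a priori estimate: every $(Y,M)\in\DD^2\otimes\MM^2$ solving BSDE$(\xi,h+dV)$, for any $h$ that is nonincreasing in $y$ with $T\sup_{[0,T]}|h(\cdot,0)|+|V|_T+|\xi|\le c$, satisfies $\|Y\|_\infty\le c$. To see this, apply the Tanaka--Meyer formula to the c\`adl\`ag semimartingale $Y$ on $[t,T]$ and discard the nonnegative local‑time and jump terms; this gives, $P$-a.s.,
\[
|Y_t|\le|\xi|+\int_t^T\mathrm{sgn}(Y_{r-})\,h(r,Y_r)\,dr+\int_t^T\mathrm{sgn}(Y_{r-})\,dV_r-\int_t^T\mathrm{sgn}(Y_{r-})\,dM_r .
\]
Since $r\mapsto h(r,Y_r)\in L^1(0,T)$ the jumps of $Y$ are Lebesgue‑negligible, so monotonicity of $h$ yields $\int_t^T\mathrm{sgn}(Y_{r-})h(r,Y_r)\,dr=\int_t^T\mathrm{sgn}(Y_r)h(r,Y_r)\,dr\le\int_0^T|h(r,0)|\,dr$; the $dV$-term is $\le|V|_T$; and $\int_0^{\cdot}\mathrm{sgn}(Y_{r-})\,dM_r\in\MM^2$. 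Hence $|Y_t|\le c-\int_t^T\mathrm{sgn}(Y_{r-})\,dM_r$ $P$-a.s., and conditioning on $\FF_t$ gives $\|Y\|_\infty\le c$.

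For existence I would put $g(t,y)=f(t,T_c(y))$; by (H1)--(H3) this is continuous and nonincreasing in $y$, satisfies $|g(t,y)|\le F_c(t):=|f(t,c)|\vee|f(t,-c)|$ for all $y$ with $F_c\in L^1(0,T)$, and $g(\cdot,0)=f(\cdot,0)$, so $g$ satisfies the hypothesis of the a priori estimate with the same $c$. Then I would regularize by inf‑convolution, $g_k(t,y)=\inf_{z\in\BR}\bigl(g(t,z)+k|y-z|\bigr)$: each $g_k$ is $k$-Lipschitz and nonincreasing in $y$, $|g_k|\le F_c$, and $g_k\uparrow g$, uniformly on compact $y$-sets for each $t$ by Dini. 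Classical Lipschitz BSDE theory gives a unique $(Y^k,M^k)\in\DD^2\otimes\MM^2$ solving BSDE$(\xi,g_k+dV)$; from $Y^k_t=E[\xi+\int_t^Tg_k(r,Y^k_r)\,dr+\int_t^TdV_r\mid\FF_t]$ and $|g_k(r,\cdot)|\le F_c(r)$ one gets the $k$-independent bound $\|Y^k\|_\infty\le c+\|F_c\|_{L^1(0,T)}=:c_0$, and the comparison theorem ($g_k\le g_{k+1}$) gives $Y^k\uparrow Y$ with $\|Y\|_\infty\le c_0$. Writing $\Delta^{k,l}_r=g_l(r,Y^l_r)-g_k(r,Y^k_r)$ for $k\le l$, one has $|\Delta^{k,l}_r|\le2F_c(r)$ and $\Delta^{k,l}_r\to0$ for a.e.\ $(r,\omega)$, so $\int_0^T|\Delta^{k,l}_r|\,dr\to0$ in $L^2(P)$ by dominated convergence; since $Y^l_t-Y^k_t=E[\int_t^T\Delta^{k,l}_r\,dr\mid\FF_t]$, Doob's inequality gives $E\sup_{t\le T}|Y^l_t-Y^k_t|^2\to0$, so $Y^k\to Y$ in $\DD^2$ and $M^k_t=Y^k_t-Y^k_0+\int_0^tg_k(r,Y^k_r)\,dr+V_t$ converges in $\DD^2$ to $M_t:=Y_t-Y_0+\int_0^tg(r,Y_r)\,dr+V_t$; passing to the limit shows that $(Y,M)$ solves BSDE$(\xi,g+dV)$. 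The a priori estimate applied to $g$ then gives $\|Y\|_\infty\le c$, so $g(r,Y_r)=f(r,Y_r)$ for all $r$ and $(Y,M)$ solves BSDE$(\xi,f+dV)$; $Y$ is bounded (hence of class (D)), $r\mapsto f(r,Y_r)$ lies between $f(\cdot,c)$ and $f(\cdot,-c)$ hence in $L^1(0,T)$, and $E[M]_T<\infty$ follows from the usual It\^o estimate for $|Y|^2$, so $(Y,M)\in\DD^2\otimes\MM^2$.

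For uniqueness, given two solutions $(Y,M),(Y',M')\in\DD^2\otimes\MM^2$ I would apply It\^o's formula to $|Y-Y'|^2$ on $[t,T]$ (localizing so that the integral against $M-M'$ is a true martingale); using $Y_T=Y'_T$ and monotonicity of $f$,
\[
E|Y_t-Y'_t|^2+E\bigl([M-M']_T-[M-M']_t\bigr)=2E\int_t^T(Y_r-Y'_r)\bigl(f(r,Y_r)-f(r,Y'_r)\bigr)\,dr\le0 ,
\]
whence $Y=Y'$ and then $M=M'$.

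The step I expect to be the main obstacle is the a priori bound: one must run the Tanaka--Meyer formula for a general c\`adl\`ag semimartingale with jumps, keep track of the signs of the local‑time and jump corrections, and combine the monotonicity of the coefficient with the bound on its value at $0$ so that the estimate closes with exactly the constant $c$. This is also what makes the argument work under the weak integrability in (H3): since one only controls $f(\cdot,y)$ in $L^1$ in time, the coefficient is first replaced by the $F_c$-bounded function $g$, for which a solution is built by the (routine) monotone Lipschitz approximation above, and the a priori bound is then invoked to identify it as a solution of the equation driven by the original $f$.
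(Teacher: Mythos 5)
Your overall architecture is exactly the paper's: replace $f$ by $f_c(t,y)=f(t,T_c(y))$, solve BSDE$(\xi,f_c+dV)$, and then use the Tanaka--Meyer formula together with the monotonicity of the coefficient and the deterministic bound on $T\sup_t|f(t,0)|+|V|_T+|\xi|$ to conclude $\|Y\|_\infty\le c$, so that the truncation is never active and $(Y,M)$ solves the original equation. Your a priori estimate and your It\^o-formula uniqueness argument are fine. The difference is in how the truncated equation is solved: the paper quotes \cite[Lemma 2.6]{KR:JFA} (the whole point of Lemma \ref{lm3.111}, as explained in Remark \ref{uw3.100}, being that $f_c$ is bounded below in $y$, which is what makes the approximation in that cited proof well defined), whereas you rebuild this step from scratch by inf-convolution and Lipschitz theory.

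That rebuilt step has a genuine integrability gap. Under (H3) the dominating function $F_c=|f(\cdot,c)|\vee|f(\cdot,-c)|$ lies in $L^1(0,T)$ only $P$-a.s.; the hypotheses of the lemma give no moment of $\int_0^TF_c(t)\,dt$ over $\Omega$. Consequently: (i) ``classical Lipschitz BSDE theory'' does not produce $(Y^k,M^k)\in\DD^2\otimes\MM^2$, because the data term $\int_0^T|g_k(t,0)|\,dt$ is only controlled by $\int_0^TF_c(t)\,dt$, which need not be square-integrable, or even integrable, over $\Omega$ (note that the inf-convolution destroys the deterministic bound at $y=0$: $g_k(t,0)$ can be as small as $f(t,c)$); (ii) your ``$k$-independent bound'' $c+\|F_c\|_{L^1(0,T)}$ is a random variable, not a constant, and does not follow from conditioning as stated; (iii) the dominated convergence giving $\int_0^T|\Delta^{k,l}_r|\,dr\to0$ in $L^2(P)$ requires $\int_0^TF_c(t)\,dt\in L^2(P)$, which you do not have. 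The standard repair is an extra deterministic truncation of the \emph{size} of the coefficient before the Lipschitz step, e.g. $h_n(t,y)=\frac{n}{n\vee F_c(t)}f_c(t,y)$: this is bounded by $n$, continuous and nonincreasing in $y$, and still satisfies $|h_n(t,0)|\le|f(t,0)|\le c/T$, so the bound $|Y^n|\le c$ holds uniformly in $n$ and the limit passage can be carried out pathwise on $\{|y|\le c\}$. A related point: the final claim $E[M]_T<\infty$ does not follow merely from boundedness of $Y$ and the ``usual It\^o estimate,'' since $E\int_0^T|f(r,Y_r)|\,dr<\infty$ is itself not a consequence of (H3); it has to be extracted from the Tanaka--Meyer estimate as well.
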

\begin{proof}
Let $f_c(t,y)=f(t,T_c(y))$. Then  $|\inf_{y\in\BR}
f_c(t,y)|<\infty$ and the proof of \cite[Lemma 2.6]{KR:JFA} shows
(see Remark \ref{uw3.100}) that  there exists a unique solution
$(Y,M)\in \DD^2\otimes\MM^2$ of BSDE$(\xi,f_c+dV)$. But by the
Tanaka-Meyer formula and the assumptions,
\begin{align*}
|Y_t|&\le E\Big(|\xi|+\int_t^T\mbox{sgn}(Y_r)f_c(r,Y_r)\,dr
+\int_t^T\mbox{sgn}(Y_r)\,dV_r|\FF_t\Big)\\& \le
E\Big(|\xi|+\int_0^T|f(r,0)|\,dr+\int_0^T\,d|V|_r|\FF_t\Big)\le c,
\end{align*}
so in fact $(Y,M)$ is a solution of BSDE$(\xi,f+dV)$.
\end{proof}

\begin{theorem}
\label{th3.0} Assume that \mbox{\rm{(H1)--(H3)}} are satisfied and
\[
E\Big(|\xi|+\int_0^T|f(t,0)|\,dt+\int_0^Td|V|_t\Big)<\infty.
\]
Then there exists  a solution $(Y,M)$ of
\mbox{\rm{BSDE}}$(\xi,f+dV)$ such that $Y\in\DD^q$ for every $q\in
(0,1)$ and $M$ is a uniformly integrable martingale.
\end{theorem}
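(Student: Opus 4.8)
The natural strategy is approximation by the bounded case handled in Lemma \ref{lm3.111}, combined with a priori estimates in $\DD^q$ for $q<1$ and a monotonicity/stability argument. First I would truncate the data: set $\xi_n=T_n(\xi)$, $V^n_t=\int_0^t\fch_{\{|V|_r\le n\}}\,dV_r$ (or simply replace $f(t,0)$ by a truncation of the source), and $f_n(t,y)=f(t,y)-f(t,0)+T_n(f(t,0))$, so that $f_n$ still satisfies (H1)--(H3) with the same monotonicity and $T\sup_t|f_n(t,0)|+|V^n|_T+|\xi_n|\le c_n<\infty$. Lemma \ref{lm3.111} then gives a unique solution $(Y^n,M^n)\in\DD^2\otimes\MM^2$ of $\mathrm{BSDE}(\xi_n,f_n+dV^n)$.

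The core of the argument is a uniform estimate. Using the Tanaka--Meyer (Itô--Meyer) formula applied to $|Y^n_t-Y^m_t|$ and the monotonicity (H2) of $f$, the drift term $\int(f_n(r,Y^n_r)-f_m(r,Y^m_r))\,\mathrm{sgn}(Y^n_r-Y^m_r)\,dr$ is controlled by $\int|f_n(r,0)-f_m(r,0)|\,dr$, so one obtains
\[
|Y^n_t-Y^m_t|\le E\Big(|\xi_n-\xi_m|+\int_0^T|f_n(r,0)-f_m(r,0)|\,dr+\int_0^Td|V^n-V^m|_r\,\Big|\,\FF_t\Big).
\]
The right-hand side is a martingale (indexed by $t$) whose terminal value tends to $0$ in $L^1$ as $n,m\to\infty$ by dominated convergence. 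Then one invokes the standard moment inequality for such ``dominated'' processes: if $|Z_t|\le E(\eta\mid\FF_t)$ for a nonnegative integrable $\eta$, then $E\sup_t|Z_t|^q\le \frac{1}{1-q}(E\eta)^q$ for $q\in(0,1)$ (a consequence of Doob's weak $(1,1)$ inequality). Hence $(Y^n)$ is Cauchy in $\DD^q$ for every $q\in(0,1)$; call the limit $Y$. For the martingale part, from the equation $M^n-M^m=\big(\text{terms converging in }L^1\big)$ one gets that $(M^n)$ is Cauchy in the space of uniformly integrable martingales with the $\|\cdot\|_1$-norm (again via a BDG-free argument, or via the weak-$L^1$ estimate on $\sup_t|M^n_t-M^m_t|$ since $M^n-M^m=Y^n-Y^m-(Y^n_0-Y^m_0)+\int(\cdots)\,dr+\int d(V^n-V^m)$), so $M^n\to M$ uniformly on $[0,T]$ in probability, $M\in\MM$ and is uniformly integrable.

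It remains to pass to the limit in the equation. Along a subsequence $Y^n\to Y$ uniformly on $[0,T]$, $P$-a.s., and $\int_0^T dM^n_r\to\int_0^T dM_r$; by (H1), $f_n(r,Y^n_r)\to f(r,Y_r)$ for a.e.\ $r$, $P$-a.s. The only delicate point—and the one I expect to be the main obstacle—is justifying $\int_0^T f_n(r,Y^n_r)\,dr\to\int_0^T f(r,Y_r)\,dr$ in $L^1(0,T)$ (or in probability), since we have only $L^1$-type control and no uniform integrability of $f_n(r,Y^n_r)$ in $r$ coming for free. This is handled by the classical trick of testing the equation against $\mathrm{sgn}(Y^n-Y^m)$ to get, via Tanaka--Meyer, uniform integrability of the family $\{\int_0^T|f_n(r,Y^n_r)|\,dr\}_n$: indeed $\int_0^T |f_n(r,Y^n_r)-f_n(r,0)|\,\mathrm{sgn}(Y^n_r)\,dr\le |Y^n_0|+|\xi_n|+\int_0^T d|V^n|_r+|M^n_T|$ plus a $\sup$-norm term, and the right side is bounded in $L^1$; combined with the a.e.\ convergence and Vitali's theorem this gives the required convergence of the drift integrals. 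Passing to the limit then yields that $(Y,M)$ solves $\mathrm{BSDE}(\xi,f+dV)$, with $Y\in\DD^q$ for all $q\in(0,1)$ and $M$ a uniformly integrable martingale, completing the proof.
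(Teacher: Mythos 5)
Your overall architecture is the same as the paper's: truncate the data ($\xi^n=T_n(\xi)$, $f_n(t,y)=f(t,y)-f(t,0)+T_n(f(t,0))$, $V^n$), solve with Lemma \ref{lm3.111}, and use Tanaka--Meyer difference estimates together with the inequality $E\sup_t|Z_t|^q\le\frac{1}{1-q}(E\eta)^q$ to get that $\{Y^n\}$ is Cauchy in $\DD^q$ and in $\|\cdot\|_1$. Up to that point you match the paper. The problem is exactly at the step you yourself flag as delicate: the passage to the limit in the drift integral. Your proposed fix does not work as stated. From Tanaka--Meyer and \cite[Lemma 2.3]{KR:JFA} one only gets that the random variables $Z_n=\int_0^T|f_n(r,Y^n_r)|\,dr$ are \emph{bounded in} $L^1(\Omega)$; boundedness in $L^1$ is not uniform integrability of $\{Z_n\}$, and --- more to the point --- what Vitali's theorem would require to conclude $\int_0^Tf_n(r,Y^n_r)\,dr\to\int_0^Tf(r,Y_r)\,dr$ from a.e.\ convergence of the integrands is equi-integrability of the family $r\mapsto f_n(r,Y^n_r(\omega))$ in $L^1(0,T)$ for (a.e.) fixed $\omega$. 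A bound on $E Z_n$ gives neither of these. Moreover, to identify the limit equation you ultimately need to pass to the limit \emph{inside a conditional expectation}, for which convergence in probability of $\int f_n(r,Y^n_r)\,dr$ is in any case insufficient without domination or $L^1(\Omega)$-convergence.

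The paper's device, which is the one genuinely new ingredient you are missing, is the localization (\ref{eq3.107}): one first proves the uniform pointwise bound $|Y^n_t|\le R_t$ with $R_t=E(|\xi|+\int_0^T|f(r,0)|\,dr+\int_0^Td|V|_r\,|\,\FF_t)$, and then stops at $\tau_k=\inf\{t:\ R_t\ge k,\ \int_0^t(|f(r,-k)|+|f(r,k)|)\,dr\ge k\}\wedge T$. On $[0,\tau_k]$ one has $|Y^n_r|\le k$ for all $n$, so by monotonicity $|f(r,Y^n_r)|\le|f(r,k)|+|f(r,-k)|$, and the integral of this dominating function up to $\tau_k$ is at most $k$ by construction; hence the stopped drift integrals are uniformly bounded by a constant and dominated convergence applies both pathwise and inside the conditional expectation in (\ref{eq3.102}). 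Note that (H3) (integrability of $f(\cdot,y)$ for each fixed $y$) is used precisely here, through the levels $\pm k$. The localization is then removed using the stationarity of $\{\tau_k\}$ and the Fatou-type bound (\ref{eq3.101}), $E\int_0^T|f(t,Y_t)|\,dt<\infty$. If you replace your Vitali argument by this stopping-time domination, your proof becomes the paper's proof; as written, the drift-convergence step is a genuine gap.
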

\begin{proof}
Let us put
\[
\xi^n=T_n(\xi),\quad f_n(t,y)=f(t,y)-f(t,0)+T_n(f(t,0)),\quad V^n_t
=\int_0^t\mathbf{1}_{\{|V|_r\le n\}}\,dV_r.
\]
By Lemma \ref{lm3.111}, for every $n\ge 1$ there exists a solution
$(Y^n,M^n)$ of BSDE$(\xi^n,f^n+dV^n)$. As in the proof of
\cite[Theorem 2.7]{KR:JFA} we show that there exists a process $Y$
of class (D) such that $Y\in\DD^q$ for $q\in(0,1)$ and
\begin{equation}
\label{eq3.100} E\sup_{0\le t\le T} |Y^n_t-Y_t|^q\rightarrow 0
\end{equation}
for every $q\in (0,1)$. By the Tanaka-Meyer formula and (H2),
\[
|Y^n_t|\le E\Big(|\xi|+\int_0^T|f(r,0)|\,dr
+\int_0^T\,d|V|_r\big|\FF_t\Big),\quad t\in[0,T].
\]
Let $R$ denote a c\`adl\`ag process such that for every $t\in
[0,T]$, $R_t$ is equal to the right-hand side of the above
inequality. Then
\[
|Y^n_t|\le R_t,\quad t\in [0,T],\quad n\ge 1.
\]
For $k,N\in\BN$ set
\begin{equation}
\label{eq3.107} \tau_{k,N}=\inf\{t\ge 0: R_t\ge k\mbox{ or }
\int_0^t(|f(r,-k)|+|f(r,k)|)\,dr\ge N\}\wedge T.
\end{equation}
By the definition of a solution of BSDE$(\xi^n,f_n+dV^n)$,
\begin{align}
\label{eq3.102} Y^n_{t\wedge\tau_{k,N}}=E\Big(Y^n_{\tau_{k,N}}
+\int_{t\wedge\tau_{k,N}}^{\tau_{k,N}}f_n(r,Y^n_r)\,dr
+\int_{t\wedge\tau_{k,N}}^{\tau_{k,N}}\,dV^n_r\big|\FF_t\Big).
\end{align}
From the definition of $\tau_{k,N}$ it follows that
\[
|\int_{t\wedge\tau_{k,N}}^{\tau_{k,N}}f_n(r,Y^n_r)\,dr|
\le\int_0^{\tau_{k,N}}|f(r,Y^n_r)|\,dr\le N.
\]
From this, (H1) and (\ref{eq3.100}) we conclude that
\[
E\int_0^{\tau_{k,N}}|f_n(t,Y_t^n)-f(t,Y_t)|\,dt\rightarrow0
\]
as $n\rightarrow\infty$. Therefore letting $n\rightarrow\infty$ in
(\ref{eq3.102}) and using (\ref{eq3.100}) and Doob's maximal
inequality (for details see  the argument following (\ref{eq6.3}))
we obtain
\begin{equation}
\label{eq3.109} Y_{t\wedge\tau_{k,N}}
=E\Big(Y_{\tau_{k,N}}+\int_{t\wedge\tau_{k,N}}^{\tau_{k,N}}f(r,Y_r)\,dr
+\int_{t\wedge\tau_{k,N}}^{\tau_{k,N}}\,dV_r\big|\FF_t\Big).
\end{equation}
By \cite[Lemma 2.3]{KR:JFA},
\[
E\int_0^T|f_n(t,Y^n_t)|\,dt\le
E\Big(|\xi|+\int_0^T|f(t,0)|\,dt+\int_0^T\,d|V|_t\Big),\quad
n\ge1,
\]
so applying Fatou's lemma and (\ref{eq3.100}) gives
\begin{equation}
\label{eq3.101} E\int_0^T|f(t,Y_t)|\,dt<\infty.
\end{equation}
By (H3), $\tau_{k,N}\rightarrow\tau_k$ $P$-a.s. as
$N\rightarrow\infty$, where
\begin{equation}
\label{eq3.110} \tau_k=\inf\{t\ge0:R_t\ge k\}\wedge T.
\end{equation}
Therefore letting $N\rightarrow\infty$ in (\ref{eq3.109})  and
using (\ref{eq3.101}) and the fact that $Y$ is of class (D) we get
\begin{equation}
\label{eq3.104}
Y_{t\wedge\tau_k}=E\Big(Y_{\tau_k}+\int_{t\wedge\tau_k}^{\tau_k}
f(r,Y_r)\,dr+\int_{t\wedge\tau_k}^{\tau_k}\,dV_r\big|\FF_t\Big).
\end{equation}
Since $R$ is a c\`adl\`ag process, $\tau_k\rightarrow T$ $P$-a.s.
as $k\rightarrow\infty$. Therefore letting $k\rightarrow\infty$ in
(\ref{eq3.104}) and using once again  (\ref{eq3.101}) and the fact
that $Y$ is of class (D) gives
\[
Y_t=E\Big(\xi+\int_t^T f(r,Y_r)\,dr+\int_t^T\,dV_r\big|\FF_t\Big).
\]
It follows that the pair $(Y,M)$, where $M$ is a c\`adl\`ag
process such that
\[
M_t=E\Big(\xi+\int_0^T
f(r,Y_r)\,dr+\int_0^T\,dV_r\big|\FF_t\Big)-Y_0,\quad t\in[0,T],
\]
is a solution of BSDE$(\xi,f+dV)$.
\end{proof}

\begin{theorem}
\label{th3.1} Assume that \mbox{\rm(H1)--(H3)} are satisfied for
every $T>0$, and that
\[
E\Big(\int_0^\zeta |f(t,0)|\,dt+\int_0^\zeta \,d|V|_t\Big)<\infty.
\]
Then there exists a unique solution $(Y,M)$ of
BSDE${}^{\zeta}(f+dV)$. Moreover, $Y\in\DD^{q}$ for $q\in(0,1)$,
$M$ is a uniformly integrable $(\FF_t)$-martingale and
\begin{equation}
\label{eq3.108} E\int_0^\zeta|f(t,Y_t)|\,dt\le E\Big(\int_0^\zeta
|f(t,0)|\,dt+\int_0^\zeta \,d|V|_t\Big).
\end{equation}
\end{theorem}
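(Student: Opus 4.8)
The plan is to obtain the solution on the infinite horizon $[0,\zeta)$ by pasting together solutions on bounded intervals $[0,T\wedge\zeta]$, and then passing to the limit $T\to\infty$. First I would fix $T>0$, set $\xi_T=Y_{T\wedge\zeta}$ (to be determined consistently) and apply Theorem \ref{th3.0} on $[0,T]$ with terminal time $T\wedge\zeta$: more precisely, run Theorem \ref{th3.0} for the data $\xi=0$ at time $T\wedge\zeta$ with coefficient $f\mathbf{1}_{[0,\zeta]}+dV$, which is legitimate since the integrability hypothesis of Theorem \ref{th3.0} follows from the assumed $E\big(\int_0^\zeta|f(t,0)|\,dt+\int_0^\zeta d|V|_t\big)<\infty$ after restricting to $[0,T\wedge\zeta]$. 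This produces, for each $T$, a solution $(Y^T,M^T)$ with $Y^T\in\DD^q$ for $q\in(0,1)$ and $M^T$ uniformly integrable. By the uniqueness part (proved via the Tanaka--Meyer argument below), these solutions are consistent on overlapping intervals: $Y^{T'}_{t\wedge\zeta}=Y^T_{t\wedge\zeta}$ for $t\le T\le T'$ whenever the terminal values match, so one defines $Y_{t\wedge\zeta}=\lim_{T\to\infty}Y^T_{t\wedge\zeta}$ unambiguously on $\{t<\zeta\}$.

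Next I would establish the a priori estimate \eqref{eq3.108} and the convergence $Y_{t\wedge\zeta}\to0$ as $t\to\infty$. For the estimate, apply \cite[Lemma 2.3]{KR:JFA} (as already used in the proof of Theorem \ref{th3.0}) on each interval $[0,T\wedge\zeta]$ to get
\[
E\int_0^{T\wedge\zeta}|f(t,Y_t)|\,dt\le E\Big(|Y_{T\wedge\zeta}|+\int_0^{T\wedge\zeta}|f(t,0)|\,dt+\int_0^{T\wedge\zeta}d|V|_t\Big),
\]
and then let $T\to\infty$; the term $E|Y_{T\wedge\zeta}|$ must be shown to vanish, which follows once the class (D) property together with $Y_{t\wedge\zeta}\to0$ is in hand, and Fatou's lemma on the left yields \eqref{eq3.108}. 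For $Y_{t\wedge\zeta}\to0$, I would use the Tanaka--Meyer inequality and (H2) to get, for $t\le T$,
\[
|Y_{t\wedge\zeta}|\le E\Big(\int_{t\wedge\zeta}^{\zeta}|f(r,0)|\,dr+\int_{t\wedge\zeta}^{\zeta}d|V|_r\,\Big|\,\FF_t\Big);
\]
the right-hand side is a nonnegative supermartingale whose expectation tends to $0$ by dominated convergence (using the integrability hypothesis), hence it converges a.s. to $0$, forcing $Y_{t\wedge\zeta}\to0$. The same bound gives domination of the whole family by a class (D) process, yielding both the class (D) property of $Y$ and uniform integrability of $M$.

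For existence of the martingale part and verification of the BSDE identity on $[0,T\wedge\zeta]$, set $M_t=E\big(\int_0^{\zeta}f(r,Y_r)\,dr+\int_0^\zeta dV_r\,\big|\,\FF_t\big)-Y_0$; this is well defined by \eqref{eq3.108}, it is a uniformly integrable martingale with $M_0=0$, and the defining relation of conditional expectation immediately gives the required equation. Uniqueness is the routine half: if $(Y,M)$ and $(Y',M')$ are two solutions, apply the Tanaka--Meyer formula to $|Y-Y'|$ on $[0,T\wedge\zeta]$, use (H2) to kill the drift contribution of $f(r,Y_r)-f(r,Y'_r)$, note that the $dV$ terms cancel, and take expectations to conclude $E|Y_{t\wedge\zeta}-Y'_{t\wedge\zeta}|\le E|Y_{T\wedge\zeta}-Y'_{T\wedge\zeta}|\to0$ as $T\to\infty$, whence $Y=Y'$ and then $M=M'$. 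I expect the main obstacle to be the careful handling of the limit $T\to\infty$ in the pasting step and the proof that $E|Y_{T\wedge\zeta}|\to0$: one must argue that the local-on-$[0,T]$ solutions genuinely extend each other (this rests on the uniqueness argument, so the order of the proof matters — uniqueness on bounded intervals should be dispatched first) and that the class (D) bound is uniform in $T$, which is exactly what the supermartingale domination above provides.
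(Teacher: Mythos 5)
Your overall strategy (solve finite-horizon BSDEs with zero terminal condition and pass to the limit) is the same as the paper's, but the key step of your construction does not work as you describe it, and this is precisely where the real difficulty of the theorem lies.

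First, the pasting step. You claim that by uniqueness the finite-horizon solutions are ``consistent on overlapping intervals: $Y^{T'}_{t\wedge\zeta}=Y^T_{t\wedge\zeta}$ for $t\le T\le T'$ whenever the terminal values match.'' They do not match: $Y^T$ is built with terminal value $0$ at $T\wedge\zeta$, whereas the restriction of $Y^{T'}$ to $[0,T]$ solves the BSDE with terminal value $Y^{T'}_{T\wedge\zeta}$, which is in general nonzero. So uniqueness on bounded intervals gives no consistency, and $\lim_{T\to\infty}Y^T_{t\wedge\zeta}$ cannot be defined ``unambiguously'' this way. What is actually needed — and what the paper proves — is a quantitative Cauchy estimate: for $m>n$ and $q\in(0,1)$,
\[
E\sup_{t\ge 0}|Y^m_t-Y^n_t|^q\le \frac{1}{1-q}\Big(E\int_{n\wedge\zeta}^{\zeta}|f(r,0)|\,dr+\int_{n\wedge\zeta}^{\zeta}d|V|_r\Big)^q,
\]
together with the analogous bound in the class (D) norm $\|\cdot\|_1$. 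This is the delicate point: as the paper explains in Remark \ref{uw3.100}, the corresponding stability lemma in \cite{KR:JFA} is only valid for $q\ge2$ or for continuous martingale parts (the Burkholder--Davis--Gundy inequality with exponent $q/2<1$ fails for general c\`adl\`ag local martingales), which is exactly why the proof had to be redone. Your proposal does not supply any substitute for this estimate.

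Second, the identification of the limit. Even granting convergence of $Y^n$ to some $Y$, you pass from the approximate equations to the limit equation by asserting that ``the defining relation of conditional expectation immediately gives the required equation.'' But $f$ is only continuous and monotone in $y$, not Lipschitz, and there is no a priori domination of $f(r,Y^n_r)$, so the convergence $\int f(r,Y^n_r)\,dr\to\int f(r,Y_r)\,dr$ in $L^1$ is not automatic. The paper handles this by localizing with the stopping times $\tau_k$ of (\ref{eq3.107}), which simultaneously bound the dominating process $R$ and the integrals of $|f(\cdot,\pm k)|$, then passing to the limit on $[0,T\wedge\zeta\wedge\tau_k]$ via Doob's maximal inequality, and only afterwards letting $k\to\infty$ and $T\to\infty$ using the class (D) property and $Y_{T\wedge\zeta}\to0$. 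Your supermartingale domination and the derivation of (\ref{eq3.108}) via Fatou are fine and match the paper, but without the Cauchy estimate and the localization argument the proof is incomplete at its two central points.
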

\begin{proof}
The uniqueness part is a direct consequence of \cite[Corollary
3.2]{KR:JFA}. To prove the existence we slightly modify the proof
of \cite[Theorem 3.4]{KR:JFA}. By Theorem \ref{th3.0}, for each
$n\in\mathbb{N}$ there exists a unique solution $(Y^{n},M^{n})$ of
the BSDE$(0,\fch_{[0,\zeta]}f+dV_{\cdot\wedge\zeta})$ on $[0,n]$
such that $Y^n\in\DD^{q}$ for $q\in (0,1)$ and $M^n$ is a
uniformly integrable $(\FF_t)$-martingale. By the definition of a
solution,
\begin{equation}
\label{eq6.1}
Y^{n}_{t}=\int_{t}^{n}\mathbf{1}_{[0,\zeta]}(r)f(r,Y^{n}_{r})\,dr
+\int_{t}^{n}dV_{r\wedge\zeta}-\int_{t}^{n}dM^{n}_{r}, \quad t\in
[0,n].
\end{equation}
Set $(Y^{n}_{t},M^{n}_{t})=(0,M^{n}_{n})$ for $t\ge n$. Then as in
the proof of \cite[Theorem 3.4]{KR:JFA} we show (see the proof of
\cite[(3.11)]{KR:JFA} and the inequality following it) that for
every $m>n$ and $q\in(0,1)$,
\[
E\sup_{t\ge 0}|Y^m_{t}-Y^n_t|^{q}\le
\frac{1}{1-q}\Big(E\int_{n\wedge\zeta}^{\zeta}|f(r,0)|\,dr
+\int_{n\wedge\zeta}^{\zeta}d|V|_{r}\Big)^{q}
\]
and
\[
\|Y^m-Y^n\|_{1}\le
\frac{1}{1-q}\Big(E\int_{n\wedge\zeta}^{\zeta}|f(r,0)|\,dr
+\int_{n\wedge\zeta}^{\zeta} d|V|_{r}\Big)^{q}.
\]
Therefore there exists $Y$ such that $Y\in\DD^{q}$ for $q\in
(0,1)$, $Y$ is of class (D) and $Y^{n}\rightarrow Y$ in the norm
$\|\cdot\|_{1}$ and in $\DD^{q}$ for  $q\in (0,1)$. Since
$Y^n_{\zeta}=0$ $P$-a.s. for $n\in\BN$, from the latter
convergence it follows in particular that
$Y_{t\wedge\zeta}\rightarrow 0$ as $t\rightarrow \infty$. In much
the same way as in the proof of \cite[(3.5)]{KR:JFA} we show that
\[
|Y^n_t|\le
E\Big(\int^{n\wedge\zeta}_{t\wedge\zeta}\mbox{sgn}(Y^n_{r-})
(f(r,Y^n_r)\,dr+dV_r)\big|\FF_t\Big).
\]
From this and (H2) we get
\begin{align}
\label{eq3.106} |Y^n_t|&\le E \Big(\int^{\zeta}_{t\wedge\zeta}
(|f(r,0)|\,dr+d|V|_r)\big|\FF_t\Big)\\
&\le E \Big(\int^{\zeta}_{0} (|f(r,0)|\,dr+d|V|_r)\big|\FF_t\Big)
=: R_t,\quad t\ge0. \nonumber
\end{align}
Let $\tau_{k,N}$ be defined by (\ref{eq3.107}) but with $R_t$ from
(\ref{eq3.106}). By (\ref{eq6.1}), for $T<n$ we have
\begin{equation}
\label{eq6.3} Y^n_{t\wedge\tau_{k,N}}
=E\Big(Y^n_{\zeta\wedge\tau_{k,N}} +\int^{\zeta\wedge\tau_{k,N}}
_{t\wedge\zeta\wedge\tau_{k,N}} (f(r,Y^n_r)\,dr
+dV_r)\big|\FF_t\Big),\quad t\in[0,T],\quad P\mbox{-a.s.}
\end{equation}
By Doob's maximal inequality, for every $\varepsilon>0$,
\[
\lim_{n\rightarrow\infty}P(\sup_{t\le T}
|E(Y^n_{\zeta\wedge\tau_{k,N}} -Y_{\zeta\wedge\tau_{k,N}}
\big|\FF_t)|>\varepsilon)  \le
\varepsilon^{-1}\lim_{n\rightarrow\infty}
E|Y^n_{\zeta\wedge\tau_{k,N}} -Y_{\zeta\wedge\tau_{k,N}}|=0.
\]
Since $\sup_{t\ge0}|Y^n_t-Y_t|\rightarrow0$ in probability $P$, it
follows from the definition of $\tau_{k,N}$ that
\begin{align*}
&\lim_{n\rightarrow\infty}E\int^{\zeta\wedge\tau_{k,N}}_0
|f(r,Y^n_r)-f(r,Y_r)|\,dr\\
&\qquad=\lim_{n\rightarrow\infty} E\int^{\zeta\wedge\tau_{k,N}-}_0
|f(r,Y^n_r)-f(r,Y_r)|\,dr=0.
\end{align*}
Applying Doob's maximal inequality we conclude from the above that
\[
\lim_{n\rightarrow\infty}P\Big(\sup_{t\le T}
E\Big(\int^{\zeta\wedge\tau_{k,N}}_t
|f(r,Y^n_r)-f(r,Y_r)|\,dr\big|\FF_t\Big)>\varepsilon\Big)=0
\]
for $\varepsilon>0$. Therefore letting $n\rightarrow\infty$ in
(\ref{eq6.3}) we can assert that $P$-a.s. we have
\begin{equation}
\label{eq6.4} Y_{t\wedge\tau_{k,N}}
=E\Big(Y_{\zeta\wedge\tau_{k,N}}
+\int^{\zeta\wedge\tau_{k,N}}_{t\wedge\zeta\wedge\tau_{k,N}}
(f(r,Y_r)\,dr+dV_r) \big|\FF_t\Big),\quad t\in[0,T].
\end{equation}
By (H3), $\tau_{k,N}\rightarrow\tau_k$ $P$-a.s. as
$N\rightarrow\infty$, where $\tau_k$ is defined by (\ref{eq3.110})
but with $R_t$ defined by (\ref{eq3.106}). Hence
$Y_{\zeta\wedge\tau_{k,N}}\rightarrow Y_{\zeta\wedge\tau_k}$
$P$-a.s. as $N\rightarrow\infty$, and consequently
$E|Y_{\zeta\wedge\tau_{k,N}}- Y_{\zeta\wedge\tau_k}|\rightarrow0$
since $Y$ is of class (D). Therefore letting $N\rightarrow\infty$
in (\ref{eq6.4}) we obtain
\begin{equation}
\label{eq3.111} Y_{t\wedge\tau_{k}} =E\Big(Y_{\zeta\wedge\tau_{k}}
+\int^{\zeta\wedge\tau_{k}}_{t\wedge\zeta\wedge\tau_{k}}
(f(r,Y_r)\,dr+dV_r) \big|\FF_t\Big),\quad t\in[0,T].
\end{equation}
Since we may assume that $R$ is a c\`adl\`ag process,
$\tau_{k}\rightarrow T$, $P$-a.s. as $k\rightarrow\infty$. Hence
$Y_{\zeta\wedge\tau_k}\rightarrow Y_{T\wedge\zeta}$ $P$-a.s. as
$k\rightarrow\infty$, and consequently $E|Y_{\zeta\wedge\tau_k}-
Y_{T\wedge\zeta}|\rightarrow0$ since $Y$ is of class (D). Also,
$E|Y_{T\wedge\zeta}|\rightarrow0$ as $T\rightarrow\infty$ since we
know that $Y_{T\wedge\zeta}\rightarrow0$ $P$-a.s. By \cite[Lemma
2.3]{KR:JFA}, for every $n\ge1$,
\[
E\int_0^{n\wedge\zeta}|f(r,Y^n_r)|\,dr\le
E\Big(|Y^n_{n\wedge\zeta}|+\int_0^{n\wedge\zeta}|f(r,0)|\,dr
+\int_0^{n\wedge\zeta}\,d|V|_r\Big).
\]
Letting $n\rightarrow\infty$ in the above inequality and applying
Fatou's lemma and the first inequality in (\ref{eq3.106}) we get
(\ref{eq3.108}). Therefore letting $k\rightarrow\infty$ in
(\ref{eq3.111}) and then letting $T\rightarrow\infty$ and using
Doob's maximal inequality we obtain
\[
Y_t=E\Big(\int^{\zeta}_{t\wedge\zeta} (f(r,Y_r)\,dr
+dV_r)|\FF_t\Big),\quad t\ge0,\quad P\mbox{-a.s.}
\]
From this, one can easily deduce that the pair $(Y,M)$, where
\[
M_{t}=E\Big(\int_{0}^{\zeta} f(r,Y_r)\,dr +\int_{0}^{\zeta}
dV_{r}|\FF_{t}\Big)-Y_{0},\quad t\ge 0,
\]
is a solution of BSDE${}^{\zeta}(f+dV)$. Finally, since the
martingale $M$ is closed, it is uniformly integrable.
\end{proof}

\subsection{Existence and uniqueness of probabilistic solutions}

Let $(L,D(L))$ be the operator defined by  (\ref{eq1.2}).

\begin{definition}
Let $\mu\in\RR$. We say that an $\EE$-quasi-continuous
$u:E\rightarrow\BR$ is a probabilistic solution of the equation
\begin{equation}
\label{eq3.1} -Lu=f_u+\mu,
\end{equation}
where $f_u(x)=f(x,u(x))$ for $x\in E$, if
$E_{x}\int_{0}^{\zeta}|f_u(X_{t})|\,dt<\infty$ and
\begin{equation}
\label{eq3.2} u(x)=E_{x}\Big(\int_{0}^{\zeta}f_u(X_{t})\,dt
+\int_{0}^{\zeta}dA^{\mu}_{t}\Big)
\end{equation}
for q.e. $x\in E$.
\end{definition}

In what follows we say that a function $u:E\rightarrow\BR$ is of
class (FD) if the process $t\mapsto u(X_t)$ is of class (D) under
the measure $P_x$ for q.e. $x\in E$. Similarly, we write
$u\in\FD^q$ if the process $t\mapsto u(X_t)$ belongs to $\DD^q$
under $P_x$ for q.e. $x\in E$. The notation  BSDE$_x$ means that
the backward stochastic differential equation under consideration
is defined on the filtered probability space
$(\Omega,(\FF_t)_{t\ge0},P_x)$.

\begin{theorem}
\label{th3.2} Assume that \mbox{\rm(A1), (A2), (A3${}^*$),
(A4${}^*$)} are satisfied. Then there exists a unique
probabilistic solution $u$ of \mbox{\rm(\ref{eq3.1})}. Actually,
$u$ is of class \mbox{\rm(FD)} and $u\in\FD^{q}$ for $q\in (0,1)$.
Moreover, for q.e. $x\in E$ there exists a unique solution
$(Y^{x},M^{x})$ of BSDE${}^{\zeta}_x(f+dA^{\mu})$. In fact,
\[
u(X_{t})=Y^{x}_{t},\quad t\ge 0, \quad P_{x}\mbox{-a.s.}
\]
\end{theorem}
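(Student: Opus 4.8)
The plan is to transfer the construction to the probabilistic side, solve a BSDE fiberwise using Theorem \ref{th3.1}, and then show that the resulting family of solutions glues into a quasi-continuous function satisfying \eqref{eq3.2}. First I would fix, for each $x\in E$, the coefficient $\bar f(t,\omega,y)=f(X_t(\omega),y)$ and the driver $dV_t=dA^\mu_t$ on the stochastic basis $(\Omega,(\FF_t),P_x)$. Conditions (A1) and (A2) immediately give (H1) and (H2) for every $T>0$, while (A3${}^*$) gives (H3) and (A4${}^*$) guarantees that $E_x\int_0^\zeta|f(X_t,0)|\,dt+E_x\int_0^\zeta d|A^\mu|_t<\infty$ for $m$-a.e.\ $x$; by Lemma \ref{lem2.1} applied to the PCAFs $A^{|f(\cdot,0)|\cdot m}$ and $|A^\mu|$ (using (A3) $\Rightarrow$ (A3${}^*$) reasoning, or rather directly the $m$-a.e.\ statement upgraded via quasi-continuity of the potentials), this in fact holds for q.e.\ $x$. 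Thus for q.e.\ $x\in E$ Theorem \ref{th3.1} applies and produces a unique solution $(Y^x,M^x)$ of BSDE${}^\zeta_x(f+dA^\mu)$ with $Y^x\in\DD^q$, $q\in(0,1)$, $Y^x$ of class (D), and the a priori bound \eqref{eq3.108}.

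Next I would construct the function $u$. The natural candidate is $u(x)=Y^x_0=E_x\big(\int_0^\zeta f(X_t,Y^x_t)\,dt+\int_0^\zeta dA^\mu_t\big)$. The key identification is the Markov property: one shows $Y^x_t=u(X_t)$, $P_x$-a.s.\ for q.e.\ $x$. This is the standard argument — the shifted process $(Y^x_{t+\cdot},M^x_{t+\cdot}-M^x_t)$ solves, under $P_x$ on the shifted filtration, the same type of BSDE started afresh from $X_t$, so by uniqueness (Theorem \ref{th3.1}, or \cite[Corollary 3.2]{KR:JFA}) it must coincide $P_x$-a.s.\ with $(Y^{X_t}_\cdot,M^{X_t}_\cdot)$ run under $P_{X_t}$; evaluating at time $0$ gives $Y^x_t=Y^{X_t}_0=u(X_t)$. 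Because $Y^x$ has càdlàg paths, $u(X_t)=Y^x_t$ is càdlàg in $t$, and together with class (D) and $\DD^q$ membership this gives that $u$ is of class (FD) and $u\in\FD^q$. Plugging $u(X_t)$ for $Y^x_t$ into the BSDE at $t=0$ and taking $E_x$ yields \eqref{eq3.2}, and the integrability $E_x\int_0^\zeta|f_u(X_t)|\,dt<\infty$ is exactly \eqref{eq3.108}.

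The remaining point is quasi-continuity of $u$. The approach is to approximate: let $u_n$ be the probabilistic solution (via Lemma \ref{lem2.1} or the potential-operator construction) of the truncated/regularized problem with data $f_n(x,y)=f(x,y)-f(x,0)+T_n(f(x,0))$ and $\mu_n$ (or directly use that $R\nu$ is quasi-continuous for $\nu\in S$ with $R\nu<\infty$, Lemma \ref{lem2.1}), and transfer the $\|\cdot\|_1$- and $\DD^q$-convergence of $Y^{n,x}$ to $Y^x$ obtained in the proof of Theorem \ref{th3.1} into convergence of $u_n$ to $u$ off an exceptional set. Since each $u_n$ is quasi-continuous (being $x\mapsto E_x(\cdots)$ with a PCAF of finite expectation, Lemma \ref{lem2.1}), and the convergence is uniform along a subsequence on a suitable nest, $u$ inherits quasi-continuity. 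I expect the main obstacle to be precisely this last step: controlling the convergence $u_n\to u$ in a manner strong enough to preserve quasi-continuity while only having $\DD^q$/$\|\cdot\|_1$-type control of the $Y$-processes (rather than $\DD^1$), which forces one to work along subsequences and to invoke Lemma \ref{lem2.1} (the generalization of \cite[Lemma 4.3]{KR:JFA}) together with a careful nest argument; the Markov-property identification $Y^x_t=u(X_t)$, while conceptually routine, also requires some care in handling the exceptional sets uniformly in $t$.
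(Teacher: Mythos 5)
Your proposal is correct and follows essentially the same route as the paper: the paper's proof likewise uses Lemma \ref{lem2.1} to verify that the hypotheses of Theorem \ref{th3.1} hold under $P_x$ for q.e.\ $x$, and then invokes Theorem \ref{th3.1} together with the arguments of \cite[Theorem 4.7]{KR:JFA}, which are precisely the steps you spell out (setting $u(x)=Y^x_0$, the Markov-property identification $Y^x_t=u(X_t)$, and quasi-continuity via approximation and Lemma \ref{lem2.1}).
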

\begin{proof}
From Lemma \ref{lem2.1} it follows that under (A4${}^*$) the
assumptions  of  Theorem \ref{th3.1} are satisfied under the
measure $P_x$ with coefficient $f(\cdot,X_\cdot)+dA^\mu$ and
terminal time $\zeta$ for q.e. $x\in E$. To prove the theorem it
now suffices to use Theorem \ref{th3.1} and repeat step by step
arguments from the proof of \cite[Theorem 4.7]{KR:JFA}.
\end{proof}
\medskip

Let us note that from Theorem \ref{th3.1} it follows that under
the assumptions of Theorem \ref{th3.2},
\begin{equation}
\label{eq3.3} E_x\int^{\zeta}_0|f_u(X_t)|\,dt\le
E_x\Big(\int^{\zeta}_0|f(X_t,0)|\,dt
+\int^{\zeta}_0d|A^{\mu}|_t\Big)
\end{equation}
for $m$-a.e. $x\in E$, where $u$ is a probabilistic solution of
(\ref{eq3.1}).

\subsection{Probabilistic solutions vs. solutions in the sense of
duality}

Assume that $(\EE,D(\EE))$ is transient and satisfies the strong
sector condition. Let $\AAA$ denote the space of all
$\EE$-quasi-continuous functions $u:E\rightarrow\BR$ such that
$u\in L^1(E;\nu)$ for every $\nu\in\hat S^{(0)}_{00}$. Following
\cite{KR:JFA} we adopt the following definition.

\begin{definition}
Let $\mu\in\MM_{0,b}$. We say  $u:E\rightarrow\BR$ is a solution
of (\ref{eq3.1}) in the sense of duality if $u\in\AAA$, $f_u\in
L^1(E;m)$ and
\begin{equation}
\label{eq3.4} \langle\nu,u\rangle=(f_u,\hat U\nu)
+\langle\mu,\widetilde{\hat U\nu}\rangle, \quad \nu\in \hat
S^{(0)}_{00}.
\end{equation}
\end{definition}

Note that by the very definition of $S^{(0)}_0$, if $\nu\in
S^{(0)}_0$ and  $u\in\FF_e$ then $\tilde u\in L^1(E;\nu)$. As a
consequence, if $u\in\FF_e$ then $\tilde u\in\AAA$.

\begin{proposition}
\label{prop3.2} Assume that $(\EE,D(\EE))$ is transient, satisfies
the strong sector condition and that $\mu\in\MM_{0,b}$. If $u$ is
$\EE$-quasi-continuous and $f_u\in L^1(E;m)$, then $u$ is a
probabilistic solution of \mbox{\rm(\ref{eq3.1})} iff it is  a
solution of \mbox{\rm(\ref{eq3.1})} in the sense of duality.
\end{proposition}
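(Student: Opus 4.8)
The plan is to show the two integral/duality identities are equivalent by testing the probabilistic Feynman--Kac formula against the measures $\nu\in\hat S^{(0)}_{00}$ and invoking Lemma \ref{lem2.7} together with Lemma \ref{lem4.1}. First I would observe that a probabilistic solution $u$ satisfies, for q.e.\ $x$,
\[
u(x)=E_x\int_0^\zeta f_u(X_t)\,dt+E_x\int_0^\zeta dA^\mu_t,
\]
so that $u=R(f_u\cdot m)+R\mu$ off an exceptional set. The hypothesis $f_u\in L^1(E;m)$ gives $f_u\cdot m\in\MM_{0,b}\subset\RR$ by Proposition \ref{prop2.2}, and $\mu\in\MM_{0,b}\subset\RR$ as well, so both $R(f_u\cdot m)$ and $R\mu$ are finite $m$-a.e.; hence $u$ is $m$-a.e.\ equal to the sum of two $\EE$-quasi-continuous functions (Lemma \ref{lem2.1}), and since $u$ is assumed $\EE$-quasi-continuous the equality $u=R(f_u\cdot m)+R\mu$ in fact holds q.e.

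Next I would integrate this q.e.\ identity against an arbitrary $\nu\in\hat S^{(0)}_{00}$. On the one hand $\langle\nu,u\rangle=\langle\nu,R(f_u\cdot m)\rangle+\langle\nu,R\mu\rangle$, where both terms are finite: indeed $\nu\in\hat S^{(0)}_{00}\subset S^{(0)}_0$ and membership of $u$ in $\AAA$ (or the bound coming from $\|\hat U\nu\|_\infty<\infty$) makes the pairing well defined. For the term $\langle\nu,R\mu\rangle$ I apply Lemma \ref{lem2.7} with $f\equiv1$, which yields
\[
\langle\nu,R\mu\rangle=E_\nu\int_0^\zeta dA^\mu_t=\langle\mu,\widetilde{\hat U\nu}\rangle.
\]
For the term $\langle\nu,R(f_u\cdot m)\rangle$ I would split $f_u=f_u^+-f_u^-$ and apply Lemma \ref{lem2.7} to each part with $\mu$ there taken to be $m$ and $f$ taken to be $f_u^{\pm}$; since $A^m_t=t$, this gives $E_\nu\int_0^\zeta f_u(X_t)\,dt=\langle f_u\cdot m,\widetilde{\hat U\nu}\rangle=(f_u,\hat U\nu)$, using that $\widetilde{\hat U\nu}$ is the quasi-continuous $m$-version of $\hat U\nu$ provided by Lemma \ref{lem4.1} (applied to the dual form, which also satisfies the strong sector condition). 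Combining the two computations turns the q.e.\ Feynman--Kac identity into exactly \eqref{eq3.4}, so a probabilistic solution is a solution in the sense of duality, and in particular $u\in\AAA$.

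For the converse, suppose $u\in\AAA$, $f_u\in L^1(E;m)$ and \eqref{eq3.4} holds. Set $w=R(f_u\cdot m)+R\mu$; by the first paragraph $w$ is $\EE$-quasi-continuous and by the forward direction it satisfies \eqref{eq3.4} with $u$ replaced by $w$. Subtracting, $\langle\nu,u-w\rangle=0$ for every $\nu\in\hat S^{(0)}_{00}$. Here the main obstacle is to pass from ``annihilated by all $\nu\in\hat S^{(0)}_{00}$'' to ``$=0$ q.e.'': for this I would apply Lemma \ref{lem2.10} to the dual form $(\hat\EE,D(\EE))$ (which is transient with the same strong sector constant, and whose $00$-class of measures is precisely $\hat S^{(0)}_{00}$), separately to the Borel sets $\{u-w>1/k\}$ and $\{u-w<-1/k\}$ after noting each carries a suitable smooth measure unless it is exceptional; letting $k\to\infty$ gives $u=w$ q.e. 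Therefore $u$ satisfies \eqref{eq3.2}, and the integrability $E_x\int_0^\zeta|f_u(X_t)|\,dt<\infty$ for q.e.\ $x$ follows from $f_u\cdot m\in\RR$ via \eqref{eq3.01}; hence $u$ is a probabilistic solution. I expect the delicate point to be the careful justification of finiteness and of the exchange of limits/signs when applying Lemma \ref{lem2.7} to the not-necessarily-signed data $f_u$, and the invocation of Lemma \ref{lem2.10} in its dual form.
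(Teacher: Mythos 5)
Your proposal is correct and follows essentially the same route as the paper's proof: Lemma \ref{lem2.7} to identify $\langle\nu,R(f_u\cdot m)+R\mu\rangle$ with the right-hand side of (\ref{eq3.4}), Proposition \ref{prop2.2} and Lemma \ref{lem2.1} for the finiteness and quasi-continuity of $w=R(f_u\cdot m)+R\mu$, and Lemma \ref{lem2.10} applied to the dual form $\hat\EE$ to pass from $\langle\nu,u-w\rangle=0$ for all $\nu\in\hat S^{(0)}_{00}$ to $u=w$ q.e. The only differences are cosmetic (you treat the implications in the opposite order and spell out a few finiteness checks the paper leaves implicit).
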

\begin{proof}
Let $u$ be a solution of (\ref{eq3.1}) in the sense of duality.
Let us denote by $w(x)$ the right-hand side of (\ref{eq3.2}) if it
is finite and set $w(x)=0$ otherwise. By Proposition
\ref{prop2.2}, $w$ is finite $m$-a.e., and hence, by Lemma
\ref{lem2.1}, $w$ is quasi-continuous. By Lemma \ref{lem2.7},
$w\in\mathcal{A}$ and $\langle\nu,w\rangle$ is equal to the
right-hand side of (\ref{eq3.4}). Thus
$\langle\nu,u\rangle=\langle\nu,w\rangle$ for $\nu\in\hat
S^{(0)}_{00}$. Lemma \ref{lem2.10} applied to the form $\hat\EE$
now shows that $u=w$ $\EE$-q.e. since $u,v$ are
$\EE$-quasi-continuous. Conversely, assume that $u$ is a
probabilistic solution of (\ref{eq3.1}). Then, again by Lemma
\ref{lem2.7}, $u\in\mathcal{A}$ and $u$ satisfies (\ref{eq3.4}).
\end{proof}

\begin{proposition}
\label{prop3.5} Assume that $(\mathcal{E}, D(\mathcal{E}))$ is
transient and \mbox{\rm(A4)} is satisfied.
\begin{enumerate}
\item[\rm(i)]
If $u$ is a probabilistic solution of \mbox{\rm(\ref{eq3.1})} then
$f_{u}\in L^{1}(E;m)$ and
\end{enumerate}
\begin{equation}
\label{eq3.5} \|f_{u}\|_{L^{1}(E;m)}
\le\|f(\cdot,0)\|_{L^{1}(E;m)}+\|\mu\|_{TV}.
\end{equation}
\begin{enumerate}
\item[\rm(ii)]If moreover $(\mathcal{E}, D(\mathcal{E}))$
satisfies the strong sector condition then $u$ is a probabilistic
solution of \mbox{\rm(\ref{eq3.1})} iff it is a solution of
\mbox{\rm(\ref{eq3.1})} in the sense of duality.
\end{enumerate}
\end{proposition}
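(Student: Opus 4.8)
The plan is to prove part (i) by a truncation-and-approximation argument and then deduce part (ii) from part (i) together with Proposition \ref{prop3.2}. For part (i), the starting point is the inequality \mbox{\rm(\ref{eq3.3})} already recorded after Theorem \ref{th3.2}, namely
\[
E_x\int^{\zeta}_0|f_u(X_t)|\,dt\le E_x\Big(\int^{\zeta}_0|f(X_t,0)|\,dt+\int^{\zeta}_0d|A^{\mu}|_t\Big)
\]
for $m$-a.e. $x\in E$. Under (A4) the measure $f(\cdot,0)\cdot m$ lies in $\MM_{0,b}$ and $\mu\in\MM_{0,b}$, so by Proposition \ref{prop2.2} the right-hand side is finite $m$-a.e.; hence $R(|f_u|\cdot m)(x)=E_x\int_0^\zeta|f_u(X_t)|\,dt$ is finite $m$-a.e. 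In particular $|f_u|\cdot m$ is a smooth measure with $R(|f_u|\cdot m)<\infty$ $m$-a.e., so by the discussion preceding Proposition \ref{prop2.2} it belongs to $\RR$.

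The key step is then to apply Lemma \ref{lem2.8}. Taking $\mu_1=|f_u|\cdot m\in S$ and $\mu_2=|f(\cdot,0)|\cdot m+|\mu|\in\MM^+_{0,b}$, the pointwise inequality between the additive functionals $A^{\mu_1}$ and $A^{\mu_2}$ is exactly \mbox{\rm(\ref{eq3.3})} (using that $A^{|f(\cdot,0)|\cdot m+|\mu|}$ dominates $A^{f(\cdot,0)\cdot m}+A^\mu$ in the relevant sense and $|A^\mu|=A^{|\mu|}$), so Lemma \ref{lem2.8} yields
\[
\|f_u\|_{L^1(E;m)}=\||f_u|\cdot m\|_{TV}\le\big\||f(\cdot,0)|\cdot m+|\mu|\big\|_{TV}=\|f(\cdot,0)\|_{L^1(E;m)}+\|\mu\|_{TV},
\]
which is \mbox{\rm(\ref{eq3.5})}; in particular $f_u\in L^1(E;m)$. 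For part (ii), once we know $f_u\in L^1(E;m)$ and $u$ is $\EE$-quasi-continuous, Proposition \ref{prop3.2} applies verbatim (its hypotheses are precisely transience, the strong sector condition, $\mu\in\MM_{0,b}$, $\EE$-quasi-continuity of $u$, and $f_u\in L^1(E;m)$), giving the equivalence between probabilistic solutions and solutions in the sense of duality.

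The main obstacle is the careful bookkeeping in the application of Lemma \ref{lem2.8}: one must check that $|f_u|\cdot m$ is genuinely smooth (which follows since $m$ is $\EE$-smooth and $f_u$ is finite $m$-a.e. by the integrability just established), and that the PCAF inequality in the hypothesis of Lemma \ref{lem2.8} is correctly extracted from \mbox{\rm(\ref{eq3.3})} — in particular that $\int_0^\zeta d|A^\mu|_t=\int_0^\zeta dA^{|\mu|}_t$ and that the dominating measure $|f(\cdot,0)|\cdot m+|\mu|$ is indeed a finite positive smooth measure under (A4). None of these points is deep, but they are the places where the argument could go wrong if one is not careful, so I would state them explicitly before invoking Lemma \ref{lem2.8}.
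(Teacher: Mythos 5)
Your proposal is correct and follows exactly the paper's argument: part (i) is obtained by combining inequality (\ref{eq3.3}) with Lemma \ref{lem2.8} applied to $\mu_1=|f_u|\cdot m$ and $\mu_2=|f(\cdot,0)|\cdot m+|\mu|$, and part (ii) follows from (i) together with Proposition \ref{prop3.2}. The paper states this in one line; your additional bookkeeping (smoothness of $|f_u|\cdot m$, identification of $|A^\mu|$ with $A^{|\mu|}$) is a correct elaboration of the same proof.
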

\begin{proof}
Assertion (i) follows from (\ref{eq3.3}) and Lemma \ref{lem2.8},
whereas (ii) follows from (i) and Proposition \ref{prop3.2}.
\end{proof}

\section{Regularity of probabilistic solutions} \label{sec4}

Below, $T_k$ denotes the truncation operator defined by
(\ref{eq3.15}).

\begin{lemma}
\label{lem4.2} Assume that $(\EE,D(\EE))$ is a Dirichlet form.
Then for every $k>0$,
\begin{equation}
\label{eq4.4} \EE(T_ku,T_ku)\le \EE(u,T_ku)
\end{equation}
for all $u\in D(\EE)$. If moreover $(\EE,D(\EE))$ satisfies the
strong sector condition then \mbox{\rm(\ref{eq4.4})} holds for all
$u\in\FF_e$.
\end{lemma}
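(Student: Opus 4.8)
The plan is to establish (\ref{eq4.4}) first for $u\in D(\EE)$ using the definition of a Dirichlet form, and then to pass to $\FF_e$ by approximation under the strong sector condition. For $u\in D(\EE)$, observe that if $k=1$ the desired inequality follows almost directly from the contraction properties of a Dirichlet form. Indeed, writing $w=u^+\wedge1$, the Dirichlet property gives $\EE(u+w,u-w)\ge0$ and $\EE(u-w,u+w)\ge0$; adding these yields $\EE(u,u)\ge\EE(w,w)$ after expansion. To handle $T_1u$ rather than $u^+\wedge1$, I would apply this to $u$ and to $-u$ and combine, using $T_1u=(u^+\wedge1)-((-u)^+\wedge1)$ and the corresponding decomposition $u=u^+-u^-$; more directly, one writes $\EE(T_1u,T_1u)\le\EE(u,T_1u)$ by expanding $\EE(u-T_1u,u+T_1u)\ge0$, which is exactly the Dirichlet contraction property applied in the form it is stated in the Preliminaries (the combination of the two inequalities $\EE(u\pm(u^+\wedge1),u\mp(u^+\wedge1))\ge0$). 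For general $k>0$, I would rescale: since $T_ku=k\,T_1(u/k)$ and $u/k\in D(\EE)$, bilinearity gives $\EE(T_ku,T_ku)=k^2\EE(T_1(u/k),T_1(u/k))\le k^2\EE(u/k,T_1(u/k))=\EE(u,T_ku)$.

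For the second statement, let $u\in\FF_e$ and let $\{u_n\}\subset D(\EE)$ be an approximating sequence, so $u_n\to u$ $m$-a.e.\ and $\{u_n\}$ is $\tilde\EE$-Cauchy. Under the strong sector condition, $\EE$ extends to $\FF_e$ and (\ref{eq2.1}) holds on $\FF_e$; moreover $\EE(v,v)$ is comparable to $\tilde\EE(v,v)$ for $v\in\FF_e$, since $\EE(v,v)=\tilde\EE(v,v)$ by symmetry of the antisymmetric part's diagonal — in fact $\EE(v,v)=\tilde\EE(v,v)$ exactly. The key point is that truncation is continuous on $\FF_e$ in the $\tilde\EE$-norm: $\{T_ku_n\}$ is a normal contraction of $\{u_n\}$, hence $\tilde\EE$-bounded, and since $T_ku_n\to T_ku$ $m$-a.e., $\{T_ku_n\}$ is an approximating sequence for $T_ku$, so $\EE(T_ku_n,T_ku_n)\to\EE(T_ku,T_ku)$ and $\EE(u_n,T_ku_n)\to\EE(u,T_ku)$ (the latter using joint convergence of approximating sequences, as recalled in the Preliminaries). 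Passing to the limit in (\ref{eq4.4}) for $u_n$ then gives it for $u$.

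The main obstacle is the continuity of the map $v\mapsto T_kv$ on $(\FF_e,\tilde\EE)$, i.e.\ verifying that $\{T_ku_n\}$ is genuinely $\tilde\EE$-Cauchy (not merely bounded) so that the limit identifications of $\EE(T_ku_n,T_ku_n)$ and $\EE(u_n,T_ku_n)$ are legitimate. This follows from the standard fact for symmetric Dirichlet forms that every normal contraction operates and is continuous on the extended Dirichlet space — equivalently one uses the Banach–Saks / Mazur argument: $\{T_ku_n\}$ is $\tilde\EE$-bounded and converges $m$-a.e., so a suitable sequence of Ces\`aro means of a subsequence is an approximating sequence for $T_ku$, and then uniqueness of the $\tilde\EE$-limit in the Hilbert space $(\FF_e,\tilde\EE)$ (valid in the transient case, which is subsumed under the strong sector condition here, or via the completion argument in \cite[Theorem 1.5.2]{FOT}) forces $T_ku_n\to T_ku$ in $\tilde\EE$. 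Once this is in hand, the inequality for $\FF_e$ is immediate from the $D(\EE)$ case by passage to the limit.
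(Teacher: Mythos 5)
There are two genuine gaps, one in each half of your argument. In the $D(\EE)$ case, the algebra does not close: expanding $\EE(u-T_1u,u+T_1u)\ge0$ gives $\EE(u,u)+\EE(u,T_1u)-\EE(T_1u,u)-\EE(T_1u,T_1u)\ge0$, which yields the desired inequality only modulo the leftover term $\EE(u,u)-\EE(T_1u,u)$, whose sign is not controlled for a non-symmetric form. (Your first computation, adding the two contraction inequalities, correctly gives $\EE(u,u)\ge\EE(u^+\wedge1,u^+\wedge1)$, but that is a different and weaker statement.) What is actually needed is $\EE(u-T_ku,T_ku)\ge0$, and this is not a linear combination of the two contraction inequalities in the definition. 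The paper gets it from sub-Markovianity of the resolvent: $\alpha(T_ku-\alpha G_\alpha T_ku,\,u-T_ku)\ge0$ holds because the integrand is pointwise nonnegative (on $\{u>k\}$ one has $u-T_ku>0$ and $T_ku-\alpha G_\alpha T_ku=k-\alpha G_\alpha T_ku\ge0$ since $|\alpha G_\alpha T_ku|\le k$; symmetrically on $\{u<-k\}$; the first factor vanishes elsewhere), and letting $\alpha\to\infty$ through the approximating forms of \cite[Theorem I.2.13]{MR} gives the inequality. Your rescaling $T_ku=kT_1(u/k)$ is fine but rests on the unproved $k=1$ case.

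In the passage to $\FF_e$, you assert that $\{T_ku_n\}$ is genuinely $\tilde\EE$-Cauchy with limit $T_ku$, but the Banach--Saks argument you invoke only shows that $T_ku_n\to T_ku$ \emph{weakly} in $(\FF_e,\tilde\EE)$: boundedness plus $m$-a.e.\ convergence identifies the weak limit but does not upgrade to norm convergence, so your limit identification $\EE(T_ku_n,T_ku_n)\to\EE(T_ku,T_ku)$ is unjustified as stated. This can be repaired without strong convergence, which is how the paper proceeds: since $\EE(v,v)=\tilde\EE(v,v)$, weak lower semicontinuity of the $\tilde\EE$-norm gives $\EE(T_ku,T_ku)\le\liminf_n\EE(T_ku_n,T_ku_n)$, which suffices because this quantity sits on the small side of the inequality; and $\EE(u_n,T_ku_n)\to\EE(u,T_ku)$ follows from the strong sector condition by writing $\EE(u_n,T_ku_n)-\EE(u,T_ku)=\EE(u_n-u,T_ku_n)+\EE(u,T_ku_n-T_ku)$, the first term being $O(\tilde\EE(u_n-u,u_n-u)^{1/2})$ and the second tending to $0$ by weak convergence. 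Your overall two-step architecture matches the paper's, but both key estimates need these repairs.
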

\begin{proof}
Let $u\in D(\EE)$. Since $G_\alpha$ is Markov,
\[
\alpha(T_k(u)-\alpha G_\alpha T_k(u),u-T_k(u))\ge 0,
\]
for all $k,\alpha>0$. Therefore the first assertion of the lemma
follows from \cite[Theorem I.2.13]{MR}. Now assume that $\EE$
satisfies (\ref{eq2.1}) and $u\in\FF_e$. Let $\{u_n\}\subset
D(\EE)$ be an approximating sequence for $u$. By \cite[Theorem
1.5.3]{FOT}, $T_ku_n\in\FF_e$ and
$\tilde\EE(T_ku_n,T_ku_n)\le\tilde\EE(u_n,u_n)$ for each
$n\in\BN$. Since $\{u_n\}$ is $\tilde\EE$-convergent,
$\sup_{n\ge1}\tilde\EE(T_ku_n,T_ku_n)<\infty$. Since
$(\FF_e,\tilde\EE)$ is a Hilbert space, applying the Banach-Saks
theorem we can find a subsequence $\{n_l\}$ such that the Ces\`aro
mean sequence $\{w_N=(1/N)\sum^N_{l=1}T_k(u_{n_l})\}$ is
$\tilde\EE$-convergent to  some $w\in\FF_e$. Since $\tilde\EE$ is
transient, there is an $m$-a.e. strictly positive and bounded
$g\in L^1(E;m)$ such that
\[
\int_E|w_N-v|g\,dm\le\EE(w_N-w,w_N-w)^{1/2}\rightarrow0.
\]
On the other hand, since $u_n\rightarrow u$ $m$-a.e., applying the
Lebesgue dominated convergence theorems shows that
$\int_E|w_N-T_ku|g\,dm\rightarrow0$. Consequently, $w=T_ku$ and
$\{T_ku_n\}$ converges $\tilde\EE$-weakly to $T_ku$. From this and
the first part of the proof it follows that
\begin{equation}
\label{eq4.7}
\EE(T_ku,T_ku)\le\liminf_{n\rightarrow\infty}\EE(T_ku_n,T_ku_n)
\le\liminf_{n\rightarrow\infty}\EE(u_n,T_ku_n).
\end{equation}
Moreover, using (\ref{eq2.1}) and the facts that $\{u_n\}$ is
$\tilde\EE$-convergent to $u$ and $\{T_ku_n\}$ is
$\tilde\EE$-weakly convergent to $T_ku$ we conclude the last limit
in (\ref{eq4.7}) equals $\EE(u,T_ku)$, which completes the proof
of the second assertion of the lemma.
\end{proof}

\begin{theorem}
\label{th4.1}  Assume that $(\EE,D(\EE))$ is a quasi-regular
transient  Dirichlet form  and $\mu\in\MM_{0,b}$. Then if $u$ is a
solution of \mbox{\rm(\ref{eq3.1})} and  $f_{u}\in L^{1}(E;m)$
then $T_{k}u\in\FF_e$ for every $k>0$. Moreover, for every $k>0$,
\begin{equation}
\label{eq4.2} \EE(T_{k}u,T_{k}u)\le
k(\|f_{u}\|_{L^{1}(E;m)}+\|\mu\|_{TV}).
\end{equation}
\end{theorem}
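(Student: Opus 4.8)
The plan is to obtain \eqref{eq4.2} first for bounded measures and then for a general solution by an approximation argument. Since $u$ is a probabilistic solution with $f_u\in L^1(E;m)$, Theorem \ref{th3.2} (applied with the coefficient reduced to the measure $f_u\cdot m+\mu$, which lies in $\MM_{0,b}$) tells us that $u(X_t)=Y^x_t$, where $(Y^x,M^x)$ solves the relevant BSDE$^\zeta$ under $P_x$ for q.e.\ $x$. First I would regularize: put $\mu_n=f_u\cdot m+\mu$ truncated so that the associated PCAF has bounded variation below level $n$, exactly as $V^n$ is built in the proof of Theorem \ref{th3.0}, and let $u_n=R\mu_n$ be the corresponding potentials. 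By the transience of $\EE$ and Proposition \ref{prop2.2}, $u_n\in\FF_e$ (in fact one can arrange $\mu_n\in S^{(0)}_0$ via Lemma \ref{lem2.2}, so that $u_n=U\mu_n$), and $u_n\to u$ $m$-a.e.\ with $E_x\int_0^\zeta d|A^{\mu_n}|_t\le E_x\int_0^\zeta d|A^\mu|_t + \|f_u\|_{L^1}$ uniformly.

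The core computation is for $u_n$. Since $u_n\in\FF_e$, Lemma \ref{lem4.2} gives $\EE(T_ku_n,T_ku_n)\le\EE(u_n,T_ku_n)$. Now I would identify the right-hand side: because $-Lu_n=\mu_n$ in the weak sense, we have $\EE(u_n,v)=\langle\mu_n,\tilde v\rangle$ for $v\in\FF_e$, so $\EE(u_n,T_ku_n)=\langle\mu_n,\widetilde{T_ku_n}\rangle$. Since $|\widetilde{T_ku_n}|\le k$ q.e., this is bounded by $k\,\|\mu_n\|_{TV}\le k(\|f_u\|_{L^1(E;m)}+\|\mu\|_{TV})$. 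Hence $\sup_n\tilde\EE(T_ku_n,T_ku_n)<\infty$.

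To pass to the limit I would argue as in the proof of Lemma \ref{lem4.2}: $(\FF_e,\tilde\EE)$ is a Hilbert space by transience, so by Banach--Saks a Ces\`aro subsequence of $\{T_ku_n\}$ is $\tilde\EE$-convergent to some $w\in\FF_e$; using the transience inequality \eqref{eq2.3} together with $u_n\to u$ $m$-a.e.\ (hence $T_ku_n\to T_ku$ $m$-a.e.\ and boundedly) one identifies $w=T_ku$, so $T_ku\in\FF_e$ and $\{T_ku_n\}$ converges $\tilde\EE$-weakly to $T_ku$. Lower semicontinuity of $v\mapsto\EE(v,v)$ on $\FF_e$ under $\tilde\EE$-weak convergence then yields
\[
\EE(T_ku,T_ku)\le\liminf_{n\to\infty}\EE(T_ku_n,T_ku_n)\le k(\|f_u\|_{L^1(E;m)}+\|\mu\|_{TV}),
\]
which is \eqref{eq4.2}.

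The main obstacle is the middle step: justifying both the convergence $u_n\to u$ $m$-a.e.\ and the identity $\EE(u_n,v)=\langle\mu_n,\tilde v\rangle$ in a way that survives the truncation. The first follows from the probabilistic representation \eqref{eq3.2} together with monotone/dominated convergence applied to the additive functionals $A^{\mu_n}$ (using that the variations are dominated by $d|A^\mu|_t+|f_u|(X_t)\,dt$, which is $P_x$-integrable over $[0,\zeta]$ for q.e.\ $x$ by (A4${}^*$) and $f_u\in L^1$, via Lemma \ref{lem2.1}). The second requires knowing $T_ku_n$ has a quasi-continuous version bounded by $k$ and that testing the equation $-Lu_n=\mu_n$ against it is legitimate for elements of $\FF_e$ under the strong sector condition — but here $\EE$ is only assumed to be a Dirichlet form, not necessarily strongly sectorial, so one should instead work with the symmetric part: note $\tilde\EE(u_n,T_ku_n)\ge\EE(T_ku_n,T_ku_n)$ need not hold directly, yet Lemma \ref{lem4.2} is stated for the (non-symmetric) form $\EE$ itself and only needs $u_n\in D(\EE)$, so I would arrange $\mu_n\in S_0$ (rather than $S^{(0)}_0$) so that $u_n=U_1\mu_n+$ correction lies in $D(\EE)$, or more cleanly approximate $u$ by $U_{1/j}\mu_n$ and take a diagonal limit, at each stage using $\EE(u_n,T_ku_n)=\langle\mu_n,\widetilde{T_ku_n}\rangle-(\text{nonnegative error})$. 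Careful bookkeeping of these two limits (in $n$ and in the resolvent parameter) is where the real work lies; everything else is an application of Lemma \ref{lem4.2}, Lemma \ref{lem2.1} and transience.
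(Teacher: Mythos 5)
Your plan is essentially the paper's proof, and the ``fix'' you arrive at in the last paragraph is exactly what the paper does: take a nest $\{F_n\}$ with $\mu_n=\fch_{F_n}f_u\cdot m+\fch_{F_n}\cdot\mu\in S^{(0)}_0$ (Lemma \ref{lem2.2}), work with the resolvent potentials $u^\alpha_n=\widetilde{U_\alpha\mu_n}\in D(\EE)$ rather than with $U\mu_n\in\FF_e$, apply the first (sector-free) assertion of Lemma \ref{lem4.2} to the form $\EE_\alpha$, use $\EE_\alpha(U_\alpha\mu_n,T_ku^\alpha_n)=\langle\mu_n,\widetilde{T_ku^\alpha_n}\rangle\le k(\|f_u\|_{L^1(E;m)}+\|\mu\|_{TV})$, and then pass to the limit twice (first $\alpha\downarrow0$, then $n\to\infty$) via Banach--Saks Ces\`aro means and $m$-a.e.\ identification of the limit, exactly as in your closing paragraph. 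Two cautions on the branches you leave open: the first route through $u_n=R\mu_n=U\mu_n$ and the identity $\EE(u_n,v)=\langle\mu_n,\tilde v\rangle$ on $\FF_e$ is genuinely unavailable here, since both the construction of $U\mu_n$ on $(\FF_e,\tilde\EE)$ and the $\FF_e$-version of Lemma \ref{lem4.2} require the strong sector condition, which Theorem \ref{th4.1} does not assume; and the variant in which $\mu_n$ is obtained by truncating the additive functional as $V^n$ is built in Theorem \ref{th3.0} does not work either, because $\int_0^t\fch_{\{|V|_r\le n\}}\,dV_r$ is not an additive functional of $\BX$ and hence corresponds to no smooth measure --- only the nest truncation $\fch_{F_n}$ is admissible. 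With those two dead ends discarded, what remains of your proposal coincides with the paper's argument.
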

\begin{proof}
By Lemma \ref{lem2.2} there exists a nest $\{F_{n}\}$ such that
${\mathbf{1}}_{F_{n}}|f_{u}|\cdot m
+{\mathbf{1}}_{F_{n}}\cdot|\mu|\in S^{(0)}_{0}$. For $\alpha>0$
set
\[
u^{\alpha}_{n}(x)=E_{x}\int_{0}^{\zeta} e^{-\alpha t}
\fch_{F_{n}}f_{u}(X_{t})\,dt +E_{x}\int_{0}^{\zeta}e^{-\alpha t}
\fch_{F_{n}}(X_{t})\, dA_{t}^{\mu},\quad x\in E
\]
and $\mu_n={\mathbf{1}}_{F_{n}}f_{u}\cdot
m+{\mathbf{1}}_{F_{n}}\cdot\mu$. By \cite[Theorem A.8]{MMS},
\[
u^{\alpha}_n(x)=\widetilde{U_{\alpha}\mu_n}(x).
\]
for q.e. $x\in E$. Hence $u^{\alpha}_n\in D(\EE)$ and
$T_ku^{\alpha}_n\in D(\EE)$ since every normal contraction
operates on $(\tilde\EE,D(\EE))$. Therefore,
\[
\EE_{\alpha}(u^{\alpha}_n,T_ku^{\alpha}_n)
=\EE_{\alpha}(U_{\alpha}\mu_n,T_ku^{\alpha}_n)
=\int_E\widetilde{T_ku^{\alpha}_n}\,d\mu_n\le
k(\|f_u\|_{L^1(E;m)}+\|\mu\|_{TV}).
\]
By Lemma \ref{lem4.2} applied to the form $\EE_{\alpha}$,
\begin{equation}
\label{eq4.15}
\EE_{\alpha}(T_{k}u^{\alpha}_{n},T_{k}u^{\alpha}_{n})\le
\EE_{\alpha}(u^{\alpha}_n,T_{k}u^{\alpha}_{n}).
\end{equation}
Consequently,
\[
\EE(T_{k}u^{\alpha}_{n},T_{k}u^{\alpha}_{n})\le
k(\|f_u\|_{L^1(E;m)}+\|\mu\|_{TV}).
\]
By the Banach-Saks theorem we can choose a sequence $\{\alpha_l\}$
such that $\alpha_l\downarrow 0$ as $l\rightarrow\infty$, and the
sequence $\{w_N=(1/N)\sum^N_{l=1}T_ku^{\alpha_l}_n\}$ is
$\tilde\EE$-convergent. Moreover, from Lemma \ref{lem4.1} one can
deduce that $u^{\alpha}_n(x)\rightarrow u_n(x)$ as
$\alpha\downarrow0$ for q.e. $x\in E$. Hence
$T_ku^{\alpha}_n\rightarrow T_ku_n$ $m$-a.e. and consequently,
$w_N\rightarrow T_ku_n$ $m$-a.e. Thus $\{w_N\}$ is an
approximating sequence for $T_ku_n$. By (\ref{eq4.15}),
$\EE(w_N,w_N)\le k(\|f_u\|_{L^1(E;m)}+\|\mu\|_{TV})$ for every
$N\in\BN$. Hence,
\[
\EE(T_ku_n,T_ku_n)=\lim_{N\rightarrow\infty}\EE(w_N,w_N)\le
k(\|f_u\|_{L^1(E;m)}+\|\mu\|_{TV}).
\]
Since $u_n\rightarrow u$ q.e. we now apply the above arguments
again, with $T_ku^{\alpha}_n$ replaced by $T_ku_n$, to obtain
(\ref{eq4.2}).
\end{proof}

\begin{corollary}
\label{cor4.3} If $(\EE,D(\EE))$ is a quasi-regular transient
Dirichlet form and $f,\mu$ satisfy \mbox{\rm (A1), (A2), (A3*),
(A4)} then there exists a unique solution  $u$ of
\mbox{\rm(\ref{eq3.1})}. Moreover, $u$ is of class \mbox{\rm(FD)},
$u\in\FD^{q}$ for $q\in (0,1)$ and \mbox{\rm(\ref{eq3.5}),
(\ref{eq4.2})} are satisfied.
\end{corollary}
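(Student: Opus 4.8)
The plan is to deduce Corollary \ref{cor4.3} by combining the two main results already established: the existence/uniqueness statement Theorem \ref{th3.2}, and the regularity statement Theorem \ref{th4.1}, together with the $L^1$-estimate from Proposition \ref{prop3.5}(i). The first thing to check is that the hypotheses (A1), (A2), (A3${}^*$), (A4) of the corollary imply the hypotheses of Theorem \ref{th3.2}. Since $(\EE,D(\EE))$ is assumed transient here, the Remark following Proposition \ref{prop2.2} applies: (A3) implies (A3${}^*$) and (A4) implies (A4${}^*$). Actually the corollary already assumes (A3${}^*$) directly, so only the implication (A4) $\Rightarrow$ (A4${}^*$) is needed, and that is exactly the content of that Remark (via Lemma \ref{lem2.1}, Proposition \ref{prop2.2}). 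Hence Theorem \ref{th3.2} applies verbatim, giving a unique probabilistic solution $u$ of (\ref{eq3.1}) which is of class (FD) and lies in $\FD^q$ for $q\in(0,1)$.

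Next I would invoke Proposition \ref{prop3.5}(i): under transience and (A4), any probabilistic solution $u$ satisfies $f_u\in L^1(E;m)$ together with the bound (\ref{eq3.5}), namely $\|f_u\|_{L^1(E;m)}\le\|f(\cdot,0)\|_{L^1(E;m)}+\|\mu\|_{TV}$. This is the crucial bridge, because Theorem \ref{th4.1} requires the a priori knowledge $f_u\in L^1(E;m)$ as a hypothesis; Proposition \ref{prop3.5}(i) supplies it for free from (A4). With $f_u\in L^1(E;m)$ in hand and $\mu\in\MM_{0,b}$ (part of (A4)), Theorem \ref{th4.1} then yields $T_ku\in\FF_e$ for every $k>0$ and the energy estimate (\ref{eq4.2}), i.e. $\EE(T_ku,T_ku)\le k(\|f_u\|_{L^1(E;m)}+\|\mu\|_{TV})$.

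Finally I would assemble the conclusions: uniqueness and existence of $u$, the (FD) and $\FD^q$ regularity come from Theorem \ref{th3.2}; the estimate (\ref{eq3.5}) comes from Proposition \ref{prop3.5}(i); and (\ref{eq4.2}) comes from Theorem \ref{th4.1}. One small point worth spelling out is that the solution $u$ produced by Theorem \ref{th3.2} is indeed ``a solution of (\ref{eq3.1})'' in the sense required by Theorem \ref{th4.1} — but the two notions coincide, since in both cases ``solution'' means a probabilistic solution satisfying the Feynman–Kac identity (\ref{eq3.2}), so there is nothing to reconcile. There is essentially no genuine obstacle here: the corollary is a packaging of previously proved statements, and the only thing requiring a word of care is verifying that the hypothesis set $\{$(A1), (A2), (A3${}^*$), (A4)$\}$ simultaneously feeds Theorem \ref{th3.2} (via (A4) $\Rightarrow$ (A4${}^*$) under transience), Proposition \ref{prop3.5}(i) (which needs exactly transience and (A4)), and then Theorem \ref{th4.1} (which needs $\mu\in\MM_{0,b}$ from (A4) and $f_u\in L^1(E;m)$ from Proposition \ref{prop3.5}(i)). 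Given the excerpt, the proof is therefore just: ``Combine Theorem \ref{th3.2} (using the Remark after Proposition \ref{prop2.2}), Proposition \ref{prop3.5}(i) and Theorem \ref{th4.1}.''
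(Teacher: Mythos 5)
Your proposal is correct and matches the paper's proof exactly: the authors' argument is precisely ``Follows immediately from Theorem \ref{th3.2}, Proposition \ref{prop3.5} and Theorem \ref{th4.1},'' with the implication (A4) $\Rightarrow$ (A4${}^*$) under transience supplied by the Remark after Proposition \ref{prop2.2}. Your additional care in checking that Proposition \ref{prop3.5}(i) furnishes the hypothesis $f_u\in L^1(E;m)$ needed by Theorem \ref{th4.1} is exactly the right (and only) point of substance.
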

\begin{proof}
Follows immediately from Theorem \ref{th3.2}, Proposition
\ref{prop3.5} and Theorem \ref{th4.1}.
\end{proof}

\begin{remark}
\label{prop4.4} Assume that $(\EE,D(\EE))$ is transient, satisfies
the strong sector condition, and that $\mu\in S^{(0)}_0$. If $u$
is a solution of \mbox{\rm(\ref{eq3.1})} such that $f_u\cdot m\in
S^{(0)}_0$ then $u$ is a weak solution of \mbox{\rm(\ref{eq3.1})},
i.e. for every $v\in\FF_e$,
\begin{equation}
\label{eq4.8} \EE(u,v)=(f_u,v)+\langle\mu,\tilde v\rangle.
\end{equation}
Indeed, by Lemma \ref{lem2.2}, if $\mu,f_u\cdot m\in S^{(0)}_0$
then $u$ satisfying (\ref{eq3.2}) is  a quasi-continuous version
of $U(f_u\cdot m+\mu)$, which implies (\ref{eq4.8}). Note that the
condition  $f_u\cdot m\in S^{(0)}_0$ is satisfied if $f_u\in
L^2(E;m)$ and there is  $c>0$ such that $(u,u)\le c\EE(u,u)$ for
$u\in D(\EE)$. Indeed, the last inequality implies that
$S_0=S^{(0)}_0$ and from the fact that $f_u\in L^2(E;m)$ it
follows that $f_u\cdot m\in S_0$.
\end{remark}

\begin{remark}
\label{rem4.5} (i) Example 5.7 in \cite{KR:JFA} shows that in
general under (A1)--(A4) the solution $u$ of (\ref{eq3.1}) may not
be locally integrable.
\smallskip\\
(ii) Assume (A1), (A2), (A3${}^*$) (A4${}^*)$ and let $u$ be a
probabilistic solution of  (\ref{eq3.1}) as in Theorem
\ref{th3.2}. Then from (\ref{eq3.2}), (\ref{eq3.3}) it follows
that $|u(x)|\le R(|f(\cdot,0)|\cdot m+2|\mu|)$. Therefore the
condition
\begin{equation}
\label{eq4.9} (|f(\cdot,0)|,\hat G1)+\langle|\mu|,\widetilde{\hat
G1}\rangle<\infty
\end{equation}
is sufficient to guarantee integrability of $u$. One interesting
situation in which (\ref{eq4.9}) holds true is given at the end of
Section \ref{sec6}.
\end{remark}

\section{The case of semi-Dirichlet forms}
\label{sec5}

In the present section,  $E$ is a locally compact separable metric
space, $m$ is an everywhere dense positive Radon measure on
$\BB(E)$, and  $(\EE,D(\EE))$ is a transient lower-bounded
semi-Dirichlet form on $L^2(E;m)$ in the sense of \cite[Section
1.1]{O}. We also assume that $(\EE,D(\EE))$ is regular (see
\cite[Section 1.2]{O}). By $\mathbb{X}$ we denote a Hunt process
associated with $(\EE,D(\EE))$ (see \cite[Theorem 3.3.4]{O}). We
fix $\gamma>\alpha_0$, where $\alpha_0$ is the constant from
conditions ($\EE.1)$, ($\EE.2$) in \cite[Section 1.1]{O}, and we
set Cap=Cap$^{(\gamma)}$, where  Cap$^{(\gamma)}$ is the capacity
defined in \cite[Section 2.1]{O}. For $B\subset E$ we define
$\sigma_B=\inf\{t>0:X_t\in B\}$, and for $\psi\in L^1(E;m)$ we set
$P_{\psi\cdot m}(\cdot)=\int_EP_x(\cdot)\psi(x)\,m(dx)$.

\begin{lemma}
\label{lm.A1} Let $\psi\in L^1(E;m)$ be strictly positive and let
$\{A_n\}$ be a decreasing  sequence of subsets of $E$ such that
\mbox{\rm Cap}$(A_1)<\infty$. If $P_{\psi\cdot
m}(\sigma_{A_n}<\infty)\rightarrow 0$ as $n\rightarrow\infty$ then
\mbox{\rm Cap}$(A_n)\rightarrow0$.
\end{lemma}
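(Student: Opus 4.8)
The statement relates the probabilistic convergence $P_{\psi\cdot m}(\sigma_{A_n}<\infty)\to 0$ to the convergence of capacities $\mathrm{Cap}(A_n)\to 0$. The natural route is to go through the equilibrium potentials. Since $\{A_n\}$ is decreasing and $\mathrm{Cap}(A_1)<\infty$, one may (after replacing each $A_n$ by a slightly larger open set with nearly the same capacity, using the usual outer-regularity of $\mathrm{Cap}^{(\gamma)}$ in \cite[Section 2.1]{O}) assume $A_n$ is open. Then the $\gamma$-equilibrium potential $e_{A_n}$ exists in $D(\EE)$, it is a quasi-continuous function with $0\le e_{A_n}\le 1$, $e_{A_n}=1$ q.e.\ on $A_n$, and $\mathrm{Cap}(A_n)=\EE_\gamma(e_{A_n},e_{A_n})$. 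Moreover $e_{A_n}$ has the probabilistic representation $\widetilde{e_{A_n}}(x)=E_x[e^{-\gamma\sigma_{A_n}}]$ q.e.\ (this is the standard identification of the equilibrium potential with the hitting potential of $A_n$, valid for regular lower-bounded semi-Dirichlet forms by the theory in \cite{O}).

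First I would show that $\EE_\gamma(e_{A_n},e_{A_n})$ is bounded and decreasing; since $\{A_n\}$ decreases, $e_{A_n}$ decreases q.e.\ by the maximum/comparison principle for equilibrium potentials, so $\{e_{A_n}\}$ is $\EE_\gamma^{1/2}$-bounded and, being monotone, converges weakly in $(D(\EE),\EE_\gamma^{1/2})$ — actually, by the Banach--Saks theorem one gets $\EE_\gamma$-convergence of Cesàro means — to some limit $e_\infty\in D(\EE)$. Next I would identify $e_\infty$: using the probabilistic representation together with the hypothesis $P_{\psi\cdot m}(\sigma_{A_n}<\infty)\to 0$, one gets $E_x[e^{-\gamma\sigma_{A_n}}]\to 0$ for $m$-a.e.\ $x$ (along a subsequence, then for all $n$ by monotonicity), since $\psi>0$ $m$-a.e.\ forces the convergence $P_{\psi\cdot m}(\sigma_{A_n}<\infty)=\int E_x(\sigma_{A_n}<\infty)\psi\,dm\to 0$ to be $m$-a.e.\ convergence of the integrand (monotone, nonnegative). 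Hence $\widetilde{e_{A_n}}\to 0$ $m$-a.e., so the $m$-a.e.\ limit of the Cesàro means is $0$, forcing $e_\infty=0$ in $L^2(E;m)$ and therefore in $D(\EE)$.

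From $e_\infty=0$ I then want to conclude $\EE_\gamma(e_{A_n},e_{A_n})\to 0$, i.e.\ not merely weak but norm convergence to $0$. Here I would use the variational characterization: $\EE_\gamma(e_{A_n},e_{A_n})=\langle \nu_{A_n},\widetilde{e_{A_n}}\rangle$ where $\nu_{A_n}$ is the $\gamma$-equilibrium measure of $A_n$ (a positive measure supported on $\overline{A_n}$, of finite energy), combined with $0\le \widetilde{e_{A_n}}\le 1$ and the dual identity $\langle\nu_{A_n},\widetilde{e_{A_m}}\rangle=\EE_\gamma(e_{A_n},e_{A_m})$ for $m\le n$. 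Using the weak convergence $e_{A_n}\to 0$ one gets $\limsup_n \EE_\gamma(e_{A_n},e_{A_n})\le\limsup_n\EE_\gamma(e_{A_n},e_{A_m})$ for each fixed $m$... the cleanest way is: $\EE_\gamma(e_{A_n},e_{A_n})=\langle\nu_{A_n},\widetilde{e_{A_n}}\rangle\le\langle\nu_{A_n},1\rangle$ and separately $\langle\nu_{A_n},\widetilde{e_{A_m}}\rangle\le\langle\nu_{A_n},\widetilde{e_{A_n}}\rangle$ since $e_{A_m}\ge e_{A_n}$, so actually one should run the argument of Lemma~\ref{lem2.5} in this setting. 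Concretely, mimic that Cesàro/monotone argument: the decreasing sequence of equilibrium potentials with uniformly bounded energy, whose $m$-a.e.\ limit is $0$, must have energy tending to $0$ by lower semicontinuity of $\EE_\gamma$ applied in reverse together with the identity $\EE_\gamma(e_{A_n},e_{A_n})=\langle\nu_{A_n},\widetilde{e_{A_n}}\rangle$ and the fact that $\widetilde{e_{A_n}}\downarrow 0$ makes $\langle\nu_{A_m},\widetilde{e_{A_n}}\rangle\downarrow 0$ for fixed $m$ by dominated convergence against the finite measure $\nu_{A_m}$; feeding $\EE_\gamma(e_{A_n},e_{A_n})\le\EE_\gamma(e_{A_m},e_{A_n})=\langle\nu_{A_m},\widetilde{e_{A_n}}\rangle$ for $n\ge m$ then gives $\limsup_n\EE_\gamma(e_{A_n},e_{A_n})\le\langle\nu_{A_m},\widetilde{e_{A_n}}\rangle\to 0$.

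\textbf{Main obstacle.} The delicate point is the measurability/invariance bookkeeping needed to pass from $P_{\psi\cdot m}(\sigma_{A_n}<\infty)\to 0$ to $m$-a.e.\ convergence $E_x(\sigma_{A_n}<\infty)\to 0$, and then to q.e.\ statements about $\widetilde{e_{A_n}}$ — this requires knowing the hitting probabilities are quasi-continuous versions of the equilibrium potentials for a regular lower-bounded semi-Dirichlet form, which is exactly the kind of fact available in \cite{O} but must be cited carefully, since the non-symmetry means one cannot invoke \cite{FOT} directly. The secondary technical nuisance is the reduction to open $A_n$: the sets $A_n$ are a priori arbitrary, and one must use outer regularity of $\mathrm{Cap}^{(\gamma)}$ to replace $A_n$ by open $U_n\supset A_n$ with $\mathrm{Cap}(U_n)\le\mathrm{Cap}(A_n)+\varepsilon_n$ while keeping $\sigma_{U_n}\le\sigma_{A_n}$, so that the hypothesis still holds for $\{U_n\}$ (or rather for $\{\bigcap_{k\le n}U_k\}$ to restore monotonicity); this is routine but needs to be said.
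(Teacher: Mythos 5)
Your final domination argument has, in skeleton, the same shape as the paper's proof: bound $\mathrm{Cap}(A_n)$ by integrating the $\gamma$-order equilibrium potential of $A_n$ against a fixed finite measure attached to $A_1$, then let the equilibrium potentials decrease to zero. But two steps are genuinely missing, and both come from treating the form as if it were symmetric. First, the capacity identity in Oshima's semi-Dirichlet setting is $\mathrm{Cap}(A)=\EE_\gamma(e^\gamma_A,\hat e^\gamma_A)$, pairing the equilibrium potential with the \emph{co}-equilibrium potential, not $\EE_\gamma(e_A,e_A)$; correspondingly the duality identities only work with the correct slots, $\EE_\gamma(e^\gamma_A,v)=\langle\mu^\gamma_A,\tilde v\rangle$ versus $\EE_\gamma(v,\hat e^\gamma_A)=\langle\hat\mu^\gamma_A,\tilde v\rangle$. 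Your key inequality $\EE_\gamma(e_{A_n},e_{A_n})\le\EE_\gamma(e_{A_m},e_{A_n})$ is asserted without proof, and verifying it is exactly where symmetry would be needed: you would want $\EE_\gamma(e_{A_m}-e_{A_n},e_{A_n})\ge 0$, which you cannot extract without moving the second argument to a co-potential. The paper instead writes $\mathrm{Cap}(A_n)=\EE_\gamma(e^\gamma_{A_n},\hat e^\gamma_{A_n})\le\EE_\gamma(e^\gamma_{A_n},\hat e^\gamma_{A_1})=\int_E e^\gamma_{A_n}\,d\hat\mu^\gamma_{A_1}$, using \cite[Lemma 2.1.1]{O} together with the excessivity of $e^\gamma_{A_n}$; this one line replaces your entire weak-convergence/Cesàro detour, which in any case you abandon at the end in favor of the domination argument, so it is dead weight.

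The second gap is at the very last step. To conclude $\int_E e^\gamma_{A_n}\,d\hat\mu^\gamma_{A_1}\to 0$ you need $e^\gamma_{A_n}\downarrow 0$ \emph{quasi-everywhere}, because the co-equilibrium measure $\hat\mu^\gamma_{A_1}$ is merely a finite smooth measure and may be singular with respect to $m$ (think of surface measure on a hypersurface for the classical form). Your hypothesis, via $\psi>0$ and monotonicity, only delivers $E_x e^{-\gamma\sigma_{A_n}}\downarrow 0$ for $m$-a.e.\ $x$, so "dominated convergence against the finite measure $\nu_{A_m}$" does not go through as stated. The upgrade from $m$-a.e.\ to q.e.\ decrease of the excessive functions $e^\gamma_{A_n}$ is a substantive step, supplied in the paper by the identification $e^\gamma_{A_n}=E_\cdot e^{-\gamma\sigma_{A_n}}$ of \cite[Theorem 3.4.8]{O} followed by \cite[Lemma 3.4.6]{O}. (By contrast, the reduction to open $A_n$ that you flag as a technical nuisance is not needed: Oshima's equilibrium and co-equilibrium potentials, and the identity of \cite[Theorem 2.2.7]{O}, are available for arbitrary sets of finite capacity.)
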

\begin{proof}
For a Borel subset $B$ of $E$,  denote by $e^{\gamma}_B$ (resp.
$\hat e^{\gamma}_B$) the $\gamma$-equilibrium potential (resp.
$\gamma$-coequilibrium potential) of $B$ (see \cite[Section
2.1]{O} for the definitions). By \cite[Theorem 2.2.7]{O},
\[
\mbox{Cap}(A_n)=\EE_\gamma(e^\gamma_{A_n},\hat{e}^\gamma_{A_n})
\le\EE_\gamma(e^\gamma_{A_n},\hat{e}^\gamma_{A_1})
=\int_Ee^\gamma_{A_n}\,d\hat{\mu}^\gamma_{A_1},
\]
where we have used \cite[Lemma 2.1.1]{O} and the fact that
$e^\gamma_{A_n}$  is excessive. By the assumption,
$H^\gamma_{A_n}(x):= E_xe^{-\gamma\sigma_{A_n}}\searrow0$ $m$-a.e.
Hence $e^\gamma_{A_n}\searrow 0$ $m$-a.e. by \cite[Theorem
3.4.8]{O}, and consequently $e^\gamma_{A_n}\searrow0$, q.e. by
\cite[Lemma 3.4.6]{O}. This  combined with the fact that
$\hat{\mu}^\gamma_{A_1}$ is smooth gives the desired result.
\end{proof}

\begin{lemma}
\label{lm.A2} Let $A$ be a PCAF of $\mathbb{X}$ such that
$E_xA_\zeta<\infty$ for $m$-a.e. $x\in E$.  Then the assertion of
Lemma \ref{lem2.1} holds true.
\end{lemma}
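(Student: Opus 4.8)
The plan is to follow the scheme of the proof of Lemma \ref{lem4.1}, combined with Lemma \ref{lm.A1}. Write $\mu$ for the Revuz measure of $A$; it is a smooth measure with respect to $(\EE,D(\EE))$, and $u(x)=E_xA_\zeta=E_x\int_0^\zeta dA^\mu_t$ is $0$-excessive (hence finely continuous) and, by hypothesis, finite $m$-a.e. The goal is to produce an $\EE$-nest of compact sets on which $u$ is continuous; quasi-continuity, and with it the q.e.\ finiteness of $u$, will then follow.

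First I would split $\mu$ into pieces of finite energy. By transience there is an $\EE$-nest $\{F_k\}$ of compact sets such that $\mu_k:=\fch_{F_k}\cdot\mu$ is of finite $0$-order energy for every $k$ (the $0$-order, semi-Dirichlet analogue of \cite[Theorem 2.2.4]{FOT}, which is contained in \cite{O}); here $\mu_k(E)=\mu(F_k)<\infty$ and $u_k:=E_{\cdot}\int_0^\zeta\fch_{F_k}(X_t)\,dA^\mu_t\le u$ $m$-a.e. For each $k$ the function $u_k$ is then a quasi-continuous version of the $0$-order potential of $\mu_k$ — this is the semi-Dirichlet counterpart of Lemma \ref{lem4.1} (equivalently of \cite[Proposition A.7]{MMS}), and it is here that one must invoke the $0$-order potential theory of transient regular semi-Dirichlet forms from \cite{O}, since the strong sector condition exploited in Lemmas \ref{lem2.5} and \ref{lem4.1} is not available. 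Since $\bigl(\bigcup_kF_k\bigr)^c$ is $\EE$-exceptional, $u_k\nearrow u$ q.e., hence $m$-a.e.

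It remains to upgrade quasi-continuity of the $u_k$ to quasi-continuity of $u$. Fix $\varepsilon>0$ and set $B^\varepsilon_k:=\{u-u_k\ge\varepsilon\}$. Since $u-u_k=E_{\cdot}\int_0^\zeta\fch_{F^c_k}(X_t)\,dA^\mu_t$ is $0$-excessive, Doob's maximal inequality applied to the nonnegative supermartingale $t\mapsto(u-u_k)(X_t)$ gives $P_x(\sigma_{B^\varepsilon_k}<\infty)\le\varepsilon^{-1}(u-u_k)(x)$ for $m$-a.e.\ $x$. Choose $\psi\in L^1(E;m)$ with $\psi>0$ $m$-a.e.\ and $\int_Eu\,\psi\,dm<\infty$, which is possible since $m$ is $\sigma$-finite and $u<\infty$ $m$-a.e.; then, by dominated convergence,
\[
P_{\psi\cdot m}(\sigma_{B^\varepsilon_k}<\infty)\le\varepsilon^{-1}\int_E(u-u_k)\,\psi\,dm\longrightarrow0\qquad(k\to\infty).
\]
Fix moreover an $\EE$-nest $\{K_j\}$ of compact sets. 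For each $j$ the sets $B^\varepsilon_k\cap K_j$ decrease in $k$ and $\mbox{Cap}(B^\varepsilon_1\cap K_j)\le\mbox{Cap}(K_j)<\infty$, so Lemma \ref{lm.A1} gives $\mbox{Cap}(B^\varepsilon_k\cap K_j)\to0$ as $k\to\infty$, for every $j$ and every $\varepsilon>0$. A diagonal argument now combines these capacity estimates (with $\varepsilon=1/i$), the nests witnessing quasi-continuity of the $u_k$, and $\{K_j\}$ into a single $\EE$-nest $\{G_j\}$ of compact sets on which $u_k\to u$ uniformly and every $u_k$ is continuous; hence $u$ is continuous on each $G_j$, i.e.\ $u$ is $\EE$-quasi-continuous.

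I expect the real difficulty to be the second step — identifying $u_k$ as a quasi-continuous version of the $0$-order potential of $\mu_k$ — because without the sector condition one cannot repeat the argument of Lemma \ref{lem4.1} (which rested on Lemma \ref{lem2.5}) and must instead extract the required $0$-order statements from \cite{O}. The capacity bookkeeping in the last step (replacing $B^\varepsilon_k\cap K_j$ by a slightly larger open set of comparable capacity, intersecting with the quasi-continuity nests of the $u_k$, and making the resulting compact sets increasing with $\EE$-exceptional complement) is routine; measurability of the $B^\varepsilon_k$ is not an issue, since each $u_k$, being quasi-continuous, is Borel and hence so is $u$.
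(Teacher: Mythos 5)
Your overall strategy --- truncate $\mu$ to a nest, approximate $u$ by quasi-continuous potentials of the truncated measures, convert the convergence into a hitting-probability estimate under $P_{\psi\cdot m}$, and feed that into Lemma \ref{lm.A1} --- is exactly the strategy of the paper's proof, and your replacement of the paper's citation of an $L^q$ supremum estimate (\cite[Lemma 6.1]{BDHPS}) by Doob's maximal inequality for the excessive functions $u-u_k$ is a legitimate, if anything more elementary, way to obtain $P_{\psi\cdot m}(\sigma_{B^\varepsilon_k}<\infty)\to0$. Your capacity bookkeeping (intersecting with a compact nest so that $\mathrm{Cap}(B^{\varepsilon}_1\cap K_j)<\infty$, monotonicity of the sets in $k$) is also in order and is in fact more careful than what the paper writes down.

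The gap is in the step you yourself flag as the difficulty: quasi-continuity of $u_k=E_{\cdot}\int_0^\zeta\fch_{F_k}(X_t)\,dA^{\mu}_t$. You propose to identify $u_k$ with a quasi-continuous version of the $0$-order potential $U\mu_k$ by invoking ``the $0$-order potential theory of transient regular semi-Dirichlet forms'' from \cite{O}. But in Section \ref{sec5} the form is only a \emph{lower-bounded} semi-Dirichlet form: $\EE$ itself need not be coercive (only $\EE_\gamma$ for $\gamma>\alpha_0$ is), no sector condition is assumed, and so $0$-order potentials are not available through the Lax--Milgram construction used in Section \ref{sec2}; this is precisely the machinery the whole of Section \ref{sec5} is designed to avoid. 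The paper's proof keeps the killing parameter positive: it works with $u^{n,\alpha}(x)=E_x\int_0^\zeta e^{-\alpha r}\fch_{F_n}(X_r)\,dA^{\mu}_r$, whose quasi-continuity is an $\alpha$-order statement available off the shelf (\cite[Lemma 4.1.11]{O}), and then lets $\alpha\downarrow0$ and $n\to\infty$ simultaneously in the hitting-probability estimate before applying Lemma \ref{lm.A1}. Your argument can be repaired in exactly this way --- either adopt the approximants $u^{n,\alpha}$ outright, or run your own Doob-plus-Lemma-\ref{lm.A1} argument once more on the excessive differences $u_k-u^{k,\alpha}$ to deduce quasi-continuity of $u_k$ from that of $u^{k,\alpha}$ --- but as written the key quasi-continuity claim rests on a $0$-order theory that is not actually available in this setting.
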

\begin{proof}
By a standard argument, $u$ is finite q.e. Let $\mu$ be a smooth
measure  such that $A=A^\mu$ and let $\{F_n\}$ be a nest such that
$\mathbf{1}_{F_n}\cdot\mu\in S_0$ (see \cite[Lemma 4.1.14]{O}). By
\cite[Lemma 4.1.11]{O}, for all $\alpha>0,n\ge 1$ the function
$u^{n,\alpha}$ defined as
\[
u^{n,\alpha}(x)=E_x\int_0^\zeta
e^{-\alpha r}\mathbf{1}_{F_n}(X_r)\,dA^\mu_r
\]
is quasi continuous. Let $\psi\in L^1(E;m)$ be a strictly positive
function such that $\|\psi\|_{L^1}=1$. Then by \cite[Lemma
6.1]{BDHPS}, for $q\in(0,1)$,
\[
E_{\psi\cdot m}\sup_{t\ge 0}
|u^{n,\alpha}(X_t)-u(X_t)|^q\le \frac{1}{1-q}(E_{\psi\cdot m}
\int_0^\zeta (1-e^{-\alpha r}\mathbf{1}_{F_n}(X_r))\,dA^\mu_r)^q,
\]
Set $A^\varepsilon_{n,\alpha} =\{x\in E;
|u^{n,\alpha}(x)-u(x)|>\varepsilon\}$. Then the above inequality
yields
\[
P_{\psi\cdot m}(\sigma_{A^\varepsilon_{n,\alpha}}<\infty)
=P_{\psi\cdot m}(\sup_{t\ge
0}|u^{n,\alpha}(X_t)-u(X_t)|>\varepsilon)\rightarrow0,\quad
\alpha\rightarrow0^+,n\rightarrow\infty.
\]
By Lemma \ref{lm.A1} we have that
Cap$(A^\varepsilon_{n,\alpha})\rightarrow 0$. Because of
arbitrariness of $\varepsilon>0$, $u^{n,\alpha}\rightarrow u$
quasi-uniformly, which implies that $u$ is quasi-continuous.
\end{proof}

\begin{lemma}
For every $\phi\in L^1(E;m)\cap \BB^+(E)$ and $\mu\in S$,
\[
(R\mu,\phi)=\langle \mu,\widetilde{\hat{G}\phi}\rangle.
\]
\end{lemma}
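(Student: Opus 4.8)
The plan is to reproduce, in the present semi-Dirichlet setting, the argument used to prove (\ref{eq3.01}), replacing the results of \cite{FOT} quoted there by the nest and regularity results for transient lower-bounded semi-Dirichlet forms collected in \cite{O}. First, by \cite[Lemma 4.1.14]{O} I would fix a nest $\{F_n\}$ such that $\mathbf{1}_{F_n}\cdot\mu\in S_0$ for every $n$. Then, for fixed $n$ and $\alpha>0$ and for $\phi\in L^1(E;m)\cap L^2(E;m)\cap\BB^+(E)$, I would set $R_\alpha(\mathbf{1}_{F_n}\cdot\mu)(x)=E_x\int_0^\zeta e^{-\alpha t}\mathbf{1}_{F_n}(X_t)\,dA^\mu_t$ and invoke \cite[Lemma 4.1.11]{O}, which says that this function is a quasi-continuous $m$-version of the $\alpha$-potential $U_\alpha(\mathbf{1}_{F_n}\cdot\mu)\in D(\EE)$, the latter being characterized by $\EE_\alpha(U_\alpha(\mathbf{1}_{F_n}\cdot\mu),v)=\langle\mathbf{1}_{F_n}\cdot\mu,\tilde v\rangle$ for all $v\in D(\EE)$. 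Taking $v=\hat G_\alpha\phi\in D(\EE)$ and using the identity $\EE_\alpha(w,\hat G_\alpha\phi)=(w,\phi)$, valid for $w\in D(\EE)$, one arrives at
\[
\langle\mathbf{1}_{F_n}\cdot\mu,\widetilde{\hat G_\alpha\phi}\rangle
=\EE_\alpha\big(U_\alpha(\mathbf{1}_{F_n}\cdot\mu),\hat G_\alpha\phi\big)
=\big(U_\alpha(\mathbf{1}_{F_n}\cdot\mu),\phi\big)
=\big(R_\alpha(\mathbf{1}_{F_n}\cdot\mu),\phi\big).
\]

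Next I would remove the auxiliary restrictions and pass to the limit. Approximating an arbitrary $\phi\in L^1(E;m)\cap\BB^+(E)$ by functions $\phi_k\in L^1(E;m)\cap L^2(E;m)\cap\BB^+(E)$ with $0\le\phi_k\nearrow\phi$, and using that $\hat G_\alpha$ is positivity preserving so that the quasi-continuous versions $\widetilde{\hat G_\alpha\phi_k}$ increase q.e. to $\widetilde{\hat G_\alpha\phi}$, the monotone convergence theorem (on the left with the smooth measure $\mathbf{1}_{F_n}\cdot\mu$, which charges no exceptional set, and on the right with $\phi$) extends the displayed identity to every $\phi\in L^1(E;m)\cap\BB^+(E)$. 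Since $e^{-\alpha t}\nearrow1$ as $\alpha\downarrow0$, the right-hand side then converges by monotone convergence to $(R(\mathbf{1}_{F_n}\cdot\mu),\phi)$, while on the left $\widetilde{\hat G_{1/k}\phi}\nearrow\widetilde{\hat G\phi}$ q.e. by the very definition of $\hat G$, so the left-hand side converges to $\langle\mathbf{1}_{F_n}\cdot\mu,\widetilde{\hat G\phi}\rangle$. Finally $(\bigcup_nF_n)^c$ is exceptional and $\mu$ is smooth, hence $\mathbf{1}_{F_n}\cdot\mu$ increases setwise to $\mu$; letting $n\to\infty$ and applying monotone convergence once more on both sides yields $(R\mu,\phi)=\langle\mu,\widetilde{\hat G\phi}\rangle$.

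The main obstacle is the step that is essentially routine in the quasi-regular case but needs care here: that the co-potentials $\hat G_\alpha\phi$ (and $\hat G\phi$) admit quasi-continuous versions relative to the capacity $\mbox{Cap}^{(\gamma)}$ of \cite[Section 2.1]{O}, and that these versions can be chosen monotone in $\phi$, so that the monotone-convergence passages above are legitimate and the quantity $\langle\mu,\widetilde{\hat G\phi}\rangle$ is well defined. This relies on the regularity of $(\EE,D(\EE))$ together with the co-capacity identity of \cite[Theorem 2.2.7]{O}, in the same spirit as the use of Lemma \ref{lm.A1} in the proof of Lemma \ref{lm.A2}. One must also check that Oshima's normalizations for the $\alpha$-potential $U_\alpha$ of a measure and for the co-resolvent $\hat G_\alpha$ put $\mu$, respectively $\phi$, in the bilinear-form slots used in the computation above; once this bookkeeping is settled the rest of the argument is formal.
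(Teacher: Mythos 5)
Your strategy is sound, but note first that the paper disposes of this lemma in one line, by citing \cite[Lemma 4.1.5]{O}, which is exactly the Revuz-type duality $(R_\alpha\nu,\phi)=\langle\nu,\widetilde{\hat G_\alpha\phi}\rangle$ for a PCAF and its Revuz measure. What you have written is instead a transplant of the paper's own proof of (\ref{eq3.01}) from the Dirichlet-form setting of Section \ref{sec3} into Section \ref{sec5}. Most of that transplant is fine: the nest from \cite[Lemma 4.1.14]{O}, the quasi-continuity from \cite[Lemma 4.1.11]{O}, and the three monotone limit passages (in $\phi_k$, in $\alpha$, in $n$) are all legitimate for the reasons you give.

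The one step that does not carry over automatically is the chain
\[
\langle\fch_{F_n}\cdot\mu,\widetilde{\hat G_\alpha\phi}\rangle
=\EE_\alpha\big(U_\alpha(\fch_{F_n}\cdot\mu),\hat G_\alpha\phi\big)
=\big(U_\alpha(\fch_{F_n}\cdot\mu),\phi\big).
\]
For a lower-bounded semi-Dirichlet form the coercivity of $\EE_\alpha$ --- and with it the Lax--Milgram construction of the potential $U_\alpha(\fch_{F_n}\cdot\mu)$ and the adjoint identity $\EE_\alpha(w,\hat G_\alpha\phi)=(w,\phi)$ --- is only available for $\alpha>\alpha_0$; this is why the paper fixes $\gamma>\alpha_0$ throughout Section \ref{sec5}. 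If $\alpha_0>0$ you cannot let $\alpha\downarrow0$ while remaining in the range where your form-theoretic identity has been established, and $\sup_{\alpha>\alpha_0}\hat G_\alpha\phi$ equals $\hat G_{\alpha_0}\phi$, not $\hat G\phi$. The clean repair is to obtain the $\alpha$-level identity $(R_\alpha(\fch_{F_n}\cdot\mu),\phi)=\langle\fch_{F_n}\cdot\mu,\widetilde{\hat G_\alpha\phi}\rangle$ for \emph{every} $\alpha>0$ directly from the probabilistic Revuz duality of \cite[Lemma 4.1.5]{O}, which needs neither the potentials $U_\alpha$ nor coercivity; your three limit passages then finish the proof exactly as you describe. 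In short: correct architecture, but the middle equality should be justified probabilistically rather than via the bilinear form once $\alpha$ leaves the coercive range.
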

\begin{proof}
Follows from \cite[Lemma 4.1.5]{O}.
\end{proof}

\begin{theorem}
\label{tw.A1} Assume that $\mbox{\rm{(A1), (A2), (A3*), (A4*)}}$
are satisfied. Then all the assertions of Theorem \ref{th3.2} hold
true.
\end{theorem}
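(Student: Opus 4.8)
The plan is to mimic the proof of Theorem~\ref{th3.2}, replacing each ingredient from the quasi-regular Dirichlet form setting by its semi-Dirichlet analogue, all of which have already been assembled in this section. The skeleton of the argument in Theorem~\ref{th3.2} is: (1) under (A4*), use a regularity lemma for potentials of PCAFs to verify that the BSDE data $f(X_\cdot,\cdot)+dA^\mu$ and terminal time $\zeta$ satisfy the hypotheses of Theorem~\ref{th3.1} under $P_x$ for q.e.\ $x$; (2) invoke Theorem~\ref{th3.1} to get, for q.e.\ $x$, a unique solution $(Y^x,M^x)$ of $\mbox{BSDE}^\zeta_x(f+dA^\mu)$; (3) glue the family $\{Y^x\}$ into a single quasi-continuous function $u$ on $E$ with $u(X_t)=Y^x_t$, $P_x$-a.s., and read off the probabilistic-solution identity \eqref{eq3.2} together with the class-(FD) and $\FD^q$ properties; (4) derive uniqueness from a comparison/uniqueness result for $\mbox{BSDE}^\zeta$. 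Steps (1)--(2) and (4) are purely about BSDEs on an abstract filtered space and so carry over verbatim, since Theorem~\ref{th3.1} (and the uniqueness input \cite[Corollary 3.2]{KR:JFA}) make no reference to the Dirichlet-form structure.

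Concretely, first I would note that since $(\EE,D(\EE))$ is a regular transient lower-bounded semi-Dirichlet form, the associated Hunt process $\BX$ and the Revuz correspondence $\mu\leftrightarrow A^\mu$ are available from \cite{O}, and that (A4*) says exactly $E_x\int_0^\zeta|f(X_t,0)|\,dt<\infty$ and $E_x\int_0^\zeta d|A^\mu|_t<\infty$ for $m$-a.e.\ $x$. Applying Lemma~\ref{lm.A2} (the semi-Dirichlet substitute for Lemma~\ref{lem2.1}) to the PCAFs $A^{|f(\cdot,0)|\cdot m}$ and $A^{|\mu|}$ upgrades this to a q.e.\ statement and shows the relevant potentials are quasi-continuous. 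Then for q.e.\ fixed $x$, on the space $(\Omega,(\FF_t),P_x)$ the coefficient $\omega,t,y\mapsto f(X_t(\omega),y)$ satisfies (H1)--(H3) for every $T>0$ by (A1), (A2), (A3*), with driving finite-variation process $V_t=A^\mu_t$, and the integrability hypothesis of Theorem~\ref{th3.1} holds by (A4*). Theorem~\ref{th3.1} then yields a unique $(Y^x,M^x)$ with $Y^x\in\DD^q$ for $q\in(0,1)$, $M^x$ uniformly integrable, and estimate \eqref{eq3.108}.

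For step (3), the passage from the family $\{(Y^x,M^x)\}_{x}$ to a single function $u$ with $Y^x_t=u(X_t)$ is exactly the content of the proof of \cite[Theorem~4.7]{KR:JFA}: one uses the Markov property of $\BX$ and uniqueness of the BSDE to show the map $x\mapsto Y^x_0$ (suitably regularized) defines a function $u$ such that $u(X_t)=Y^x_t$ for $P_x$-a.e., for q.e.\ $x$; quasi-continuity of $u$ then follows from Lemma~\ref{lm.A2} applied to $u=R(f_u\cdot m+\mu)$ once $f_u\in L^1_{loc}$ along paths is known, which is built into the BSDE solution. Class (FD) and $u\in\FD^q$ for $q\in(0,1)$ are immediate from the corresponding properties of $Y^x$. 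Setting $t=0$ in the BSDE gives \eqref{eq3.2}, so $u$ is a probabilistic solution; uniqueness of $u$ follows from uniqueness of $\mbox{BSDE}^\zeta_x(f+dA^\mu)$ via \cite[Corollary~3.2]{KR:JFA}, exactly as before. Thus the proof reduces to the single sentence: the assumptions guarantee, via Lemma~\ref{lm.A2}, that Theorem~\ref{th3.1} applies under $P_x$ for q.e.\ $x$, and then the arguments of the proof of \cite[Theorem~4.7]{KR:JFA} go through unchanged.

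The main obstacle is making sure that \emph{every} step of the proof of \cite[Theorem~4.7]{KR:JFA} really does only use tools available in the regular semi-Dirichlet setting --- in particular the gluing argument, which in \cite{KR:JFA} relies on quasi-continuity results and on the fact that exceptional sets for the process coincide with capacity-zero sets. In the present section this coincidence and the needed quasi-continuity statements are supplied by \cite{O} and by Lemmas~\ref{lm.A1}--\ref{lm.A2}; the delicate point is that here one works with the capacity $\mbox{Cap}=\mbox{Cap}^{(\gamma)}$ and must check that the ``for q.e.\ $x$'' in the hypotheses of Theorem~\ref{th3.1}, obtained via Lemma~\ref{lm.A2}, is compatible with the gluing. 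Since Lemma~\ref{lm.A2} is precisely the semi-Dirichlet analogue of Lemma~\ref{lem2.1} that was used in Theorem~\ref{th3.2}, this compatibility holds, and the proof is complete by invoking Theorem~\ref{th3.1} and repeating the arguments of the proof of \cite[Theorem~4.7]{KR:JFA}.
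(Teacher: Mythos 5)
Your proposal matches the paper's own proof, which consists precisely of repeating the proof of Theorem \ref{th3.2} (apply Lemma \ref{lm.A2} in place of Lemma \ref{lem2.1} to verify the hypotheses of Theorem \ref{th3.1} under $P_x$ for q.e.\ $x$, then repeat the arguments of \cite[Theorem 4.7]{KR:JFA}). You have simply spelled out the steps the paper leaves implicit; the approach is the same.
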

\begin{proof}
It is enough to repeat the proof of Theorem \ref{th3.2} with the
only exception that we now use Lemma \ref{lm.A2} instead of Lemma
\ref{lem2.1}.
\end{proof}

We close this section with some remarks on the class $\RR$.
Proposition \ref{prop2.2} says that $\MM_{0,b}\subset\RR$  in case
$\EE$ is a transient Dirichlet form.  We shall show that for
semi-Dirichlet forms the same inclusion holds under the following
duality condition considered in \cite{K:JFA}:
\begin{enumerate}
\item[$(\Delta)$]  there exists a nest $\{F_n\}$  such that for  every
$n\in\BN$  there is a non-negative $\eta_n \in L^2(E;m)$ such that
$\eta_n>0$ $m$-a.e. on $F_n$ and $\hat{G}\eta_n$ is bounded.
\end{enumerate}

One can observe that under ($\Delta$),
\[
\RR=\{\mu\in S;R\mu<\infty\,\, m\mbox{-a.e.}\}.
\]

\begin{proposition}
\label{lem5.5} Assume that $\EE$ satisfies the duality condition
$(\Delta)$. Then $\MM_{0,b}\subset \RR$.
\end{proposition}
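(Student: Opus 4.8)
The plan is to exploit the reformulation of $\RR$ that holds under $(\Delta)$, namely $\RR=\{\mu\in S;\,R\mu<\infty\ \mbox{$m$-a.e.}\}$, so that it suffices to prove $R|\mu|<\infty$ $m$-a.e.\ for every $\mu\in\MM_{0,b}$; since $|\mu|\in\MM_{0,b}$ whenever $\mu\in\MM_{0,b}$, I may assume $\mu\ge0$. The main tool is the identity $(R\mu,\phi)=\langle\mu,\widetilde{\hat G\phi}\rangle$ (the last lemma preceding Theorem \ref{tw.A1}), valid for every $\phi\in L^1(E;m)\cap\BB^+(E)$. The strategy is therefore to produce a single $\phi\in L^1(E;m)\cap\BB^+(E)$ with $\phi>0$ $m$-a.e.\ and $\hat G\phi$ bounded: then $\widetilde{\hat G\phi}$ is bounded q.e., hence $\mu$-a.e.\ (as $\mu$ charges no exceptional set), so $(R\mu,\phi)=\langle\mu,\widetilde{\hat G\phi}\rangle\le\|\hat G\phi\|_\infty\,\mu(E)<\infty$ because $\mu(E)=\|\mu\|_{TV}<\infty$; since $\phi>0$ $m$-a.e.\ and $R\mu\cdot\phi\in L^1(E;m)$, this forces $R\mu<\infty$ $m$-a.e., i.e.\ $\mu\in\RR$.

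To construct $\phi$, first fix $h\in L^1(E;m)$ with $0<h\le1$ $m$-a.e.\ (available since $m$ is $\sigma$-finite). Let $\{F_n\}$ and $\eta_n\in L^2(E;m)\cap\BB^+(E)$ be as in $(\Delta)$, so that $\eta_n>0$ $m$-a.e.\ on $F_n$ and $\hat G\eta_n\le C_n$ for some constants $C_n$. Put $\psi_n:=\eta_n\wedge h$. Then $\psi_n\in L^1(E;m)\cap\BB^+(E)$, $\psi_n>0$ $m$-a.e.\ on $F_n$, and, since $\hat G$ is positivity preserving, $\hat G\psi_n\le\hat G\eta_n\le C_n$. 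Now pick $a_n>0$ with $\sum_n a_n(\|\psi_n\|_{L^1(E;m)}+C_n)<\infty$ and set $\phi:=\sum_n a_n\psi_n$. Then $\phi\in L^1(E;m)\cap\BB^+(E)$; because $\hat G$ commutes with increasing limits of nonnegative functions, $\hat G\phi=\sum_n a_n\hat G\psi_n\le\sum_n a_n C_n<\infty$, so $\hat G\phi$ is bounded; and since $\bigcup_n F_n$ is the complement of an $\EE$-exceptional set, hence of full $m$-measure, while $\phi\ge a_n\psi_n>0$ $m$-a.e.\ on $F_n$, we get $\phi>0$ $m$-a.e. This $\phi$ has all the required properties.

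The step needing the most care — though it is essentially bookkeeping — is getting $\phi\in L^1(E;m)$, $\phi>0$ $m$-a.e., and $\hat G\phi$ bounded all at once: the $\eta_n$ of $(\Delta)$ lie only in $L^2$, their positivity sets merely exhaust $E$ through the nest $\{F_n\}$, and $\hat G$ of a naive sum need not be bounded; truncating by $h$ fixes integrability, the nest property gives positivity after summing, and the summable weights $a_n$ tame the growth of the bounds $C_n$. One should also record, when invoking the identity $(R\mu,\phi)=\langle\mu,\widetilde{\hat G\phi}\rangle$, that $\widetilde{\hat G\phi}$ is a legitimate object here: $\hat G\phi$ is bounded and (being a $0$-order copotential of a function in $L^1\cap L^2$) admits a quasi-continuous version, so $\widetilde{\hat G\phi}\le\|\hat G\phi\|_\infty$ q.e., which is all that the estimate above uses.
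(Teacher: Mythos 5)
Your argument is correct and takes essentially the same route as the paper: both rest on the duality identity $(R\mu,\phi)=\langle\mu,\widetilde{\hat G\phi}\rangle$ together with the bound $\langle\mu,\widetilde{\hat G\phi}\rangle\le\|\mu\|_{TV}\,\|\hat G\phi\|_{\infty}$, the paper simply testing against each $\eta_n$ separately (giving $R\mu<\infty$ $m$-a.e.\ on each $F_n$, hence $m$-a.e.\ since $E\setminus\bigcup_nF_n$ is exceptional and so $m$-null) rather than assembling a single test function. Your truncation-and-summation construction of $\phi$ is harmless extra bookkeeping and in fact tidies up the point that the $\eta_n$ of $(\Delta)$ are only assumed to lie in $L^2(E;m)$ while the identity is stated for $\phi\in L^1(E;m)\cap\BB^+(E)$.
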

\begin{proof}
Let $\mu\in\MM_{0,b}$ and let $\eta_n$ be the functions of the
definition of  $(\Delta)$. Then
\[
(R\mu,\eta_n)=\langle\mu,\widetilde{\hat{G}\eta_n}\rangle \le
\|\mu\|_{TV}\cdot\|\hat{G}\eta_n\|_{\infty}<\infty
\]
for every $n\in\BN$. Hence $R\mu<\infty$ $m$-a.e., and
consequently $\mu\in\RR$ by the remark preceding the lemma.
\end{proof}

\begin{remark}
\label{rem5.6} If $\EE$ satisfies $(\Delta)$  then by Lemma
\ref{lm.A2} and Proposition \ref{lem5.5}, (A3) implies (A3${}^*$)
and (A4) implies (A4${}^*$).
\end{remark}

\section{Applications} \label{sec6}

In this section we show by examples how our general results work
in practice. Propositions \ref{prop5.5}--\ref{prop5.4} below
concerning nonlocal operators and operators in Hilbert spaces are
new even in the linear case, i.e. if $f\equiv0$. To our knowledge
Proposition \ref{prop5.2} concerning nonsymmetric local form is
also new. Note that in all the examples below concerning Dirichlet
forms one can describe explicitly the structure  of bounded smooth
measures (see \cite{KR:JMAA}). In the last two examples we
consider semi-Dirichlet forms.

\subsection{Classical nonsymmetric local regular forms}
\label{sec6.1}

We start with nonsymmetric forms  associated with divergence form
operators. Let $D$ be an bounded open subset of $\BR^d$, $d\ge3$,
and let $m$ be the Lebesgue measure on $D$. Assume that
$a:D\rightarrow\BR^d\otimes\BR^d$, $b,d:D\rightarrow\BR^d$ and
$c:D\rightarrow\BR$ are measurable functions such that
\begin{enumerate}
\item[(a)] $c-\sum^d_{i=1}\frac{\partial b_i}{\partial x_i}\ge0$ and $
c-\sum^d_{i=1}\frac{\partial d_i}{\partial x_i}\ge0$ in the sense
of Schwartz distributions,
\item[(b)]there exist $\lambda>0$, $M>0$ such that $\sum^d_{i,j=1}\tilde
a_{ij}\xi_i\xi_j\ge\lambda|\xi|^2$ for all $\xi\in\BR^d$ and
$|\check{a}_{ij}|\le M$ for $i,j=1,\dots,d$, where $\tilde
a_{ij}=\frac12(a_{ij}+a_{ji})$,
$\check{a}_{ij}=\frac12(a_{ij}-a_{ji})$,
\item[(c)]$c\in L^{d/2}_{loc}(D;dx)$ and $b_i,d_i\in
L^d_{loc}(D;dx)$, $b_i-d_i\in L^d(D;dx)\cup L^{\infty}(D;dx)$ for
$i=1,\dots,d$.
\end{enumerate}
Then by \cite[Proposition II.2.11]{MR}, the form
$(\EE,C^{\infty}_0(D))$, where
\begin{equation}
\label{eq6.9} \EE(u,v)=\int_D\langle a\nabla u,\nabla
v\rangle_{\BR^d}\,\,dx +\int_D(\langle b,\nabla
u\rangle_{\BR^d}\,v+\langle d,\nabla v\rangle_{\BR^d}\,u)\,dx
+\int_Dcuv\,dx,
\end{equation}
is closable and its closure $(\EE,D(\EE))$ is a regular Dirichlet
form on $L^2(D;dx)$. By (a) and (b),
\begin{equation}
\label{eq5.5} \EE(u,u)\ge\int_D\langle a\nabla u,\nabla
u\rangle_{\BR^d}\,dx =\int_D\langle\tilde a\nabla u,\nabla
u\rangle_{\BR^d}\,dx \ge\lambda\int_D\langle\nabla u,\nabla
u\rangle_{\BR^d}\,dx
\end{equation}
for $u\in H^1_0(D)$, and hence, by Poincar\'e's inequality, there
is $C_1>0$ such that
\begin{equation}
\label{eq5.06} \EE(u,u)\ge C_1(u,u)
\end{equation}
for $u\in H^1_0(D)$. Consequently, $(\EE,D(\EE))$ satisfies the
strong sector condition. From the calculations in \cite[pp.
50--51]{MR} it follows that there exists $C_2>0$ depending on
$\lambda$ and the coefficients $a,b,c,d$ such that
\begin{equation}
\label{eq5.6} \EE(u,u)\le C_2\BD_1(u,u),
\end{equation}
where $\BD_1(u,u)=\BD(u,u)+\int_Du^2\,dx$ and $\BD$ is defined by
(\ref{eq3.03}).  By (\ref{eq5.5})--(\ref{eq5.6}),
$D(\EE)=H^1_0(D)$. From this, (\ref{eq5.5}) and the fact that
$(\BD,H^1_0(D))$ is transient it follows that $(\EE,D(\EE))$ is
transient as well.

The operator corresponding to $(\EE,D(\EE))$ in the sense of
(\ref{eq1.2}) has the form
\begin{equation}
\label{eq6.11} Lu=\sum^d_{i,j=1}\frac{\partial}{\partial
x_i}(a_{ij}\frac{\partial u}{\partial x_j})
-\sum^d_{i=1}b_i\frac{\partial u}{\partial x_i}
+\sum^d_{i=1}\frac{\partial}{\partial x_i}(d_iu)-cu.
\end{equation}

From the above considerations and Corollary \ref{cor4.3} we obtain
the following proposition.
\begin{proposition}
\label{prop5.2} Let $D\subset\BR^d$, $d\ge3$, be a bounded domain
and let $a,b,c,d$ satisfy \mbox{\rm(a)--(c)}. If $f,\mu$ satisfy
\mbox{\rm(A1)--(A4)} then there exists a unique probabilistic
solution of the problem
\[
-Lu=f_u+\mu \mbox{ on }D,\quad u|_{\partial D}=0.
\]
Moreover, $f_u\in L^1(D;dx)$, $T_ku\in H^1_0(D)$ for every $k>0$
and \mbox{\rm(\ref{eq3.5}), (\ref{eq4.2})} hold true.
\end{proposition}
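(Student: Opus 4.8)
The plan is to reduce everything to Corollary \ref{cor4.3} by checking that the form $(\EE,D(\EE))$ attached to (\ref{eq6.9}) is a quasi-regular transient Dirichlet form satisfying the strong sector condition; all three facts have in effect already been recorded in the discussion preceding the proposition, so the proof is mostly a matter of assembling them. First I would note that, by \cite[Proposition II.2.11]{MR}, the closure $(\EE,D(\EE))$ of $(\EE,C^\infty_0(D))$ is a \emph{regular} Dirichlet form on $L^2(D;dx)$, hence in particular quasi-regular. Next, by (a), (b) and Poincar\'e's inequality one has the coercivity estimate (\ref{eq5.06}), so the strong sector condition holds. Finally, (\ref{eq5.5})--(\ref{eq5.6}) give $D(\EE)=H^1_0(D)$, and this identification together with (\ref{eq5.5}) and the transience of the classical form (\ref{eq3.03}) forces $(\EE,D(\EE))$ to be transient as well.

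With $(\EE,D(\EE))$ a quasi-regular transient Dirichlet form, the remark following Proposition \ref{prop2.2} applies and yields that (A3) implies (A3${}^*$); assumptions (A1), (A2), (A4) are in force by hypothesis. Hence the hypotheses of Corollary \ref{cor4.3} are met, and that corollary provides a unique probabilistic solution $u$ of $-Lu=f_u+\mu$, with $u$ of class (FD), $u\in\FD^q$ for $q\in(0,1)$, $f_u\in L^1(D;dx)$ together with the bound (\ref{eq3.5}), and $T_ku\in\FF_e$ with the energy estimate (\ref{eq4.2}) for every $k>0$. It then remains to phrase these conclusions in the language of the proposition: the operator $L$ associated with $(\EE,D(\EE))$ via (\ref{eq1.2}) is exactly the divergence form operator (\ref{eq6.11}); on the bounded domain $D$ the extended Dirichlet space of (the symmetric part of) $(\EE,D(\EE))$ is $H^1_0(D)$ --- because $\tilde\EE$ is comparable to $\BD$ by (\ref{eq5.5})--(\ref{eq5.6}) and $H^1_0(D)$ is already complete for the Dirichlet norm --- so $T_ku\in\FF_e$ reads $T_ku\in H^1_0(D)$; and the homogeneous boundary condition $u|_{\partial D}=0$ is encoded in the probabilistic solution, since the Hunt process $\BX$ properly associated with $(\EE,D(\EE))$ is the diffusion killed upon leaving $D$, whose life-time $\zeta$ is the exit time of $D$, so that the Feynman--Kac representation (\ref{eq3.2}) automatically carries the Dirichlet condition.

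I do not expect a genuine obstacle: the analytic substance lies in Sections \ref{sec2}--\ref{sec4}, and here the only delicate points are bookkeeping ones --- that ``regular'' implies ``quasi-regular'' for a form on the locally compact space $D$, that $D(\EE)=H^1_0(D)$ is the extended Dirichlet space $\FF_e$ (which uses boundedness of $D$ so that $H^1_0(D)$ is closed under the energy norm), and that the upgrade (A3)$\Rightarrow$(A3${}^*$) is legitimate. The last of these is precisely where transience of $(\EE,D(\EE))$ enters, and it is the one step I would double-check carefully, deriving transience from (\ref{eq5.5}) and the transience of $(\BD,H^1_0(D))$ on a bounded domain.
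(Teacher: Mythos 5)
Your proposal is correct and follows essentially the same route as the paper: the discussion preceding the proposition (regularity via \cite[Proposition II.2.11]{MR}, the coercivity (\ref{eq5.06}) giving the strong sector condition, the comparison (\ref{eq5.5})--(\ref{eq5.6}) giving $D(\EE)=H^1_0(D)$ and transience) is exactly what the paper assembles before invoking Corollary \ref{cor4.3}, together with the remark that transience upgrades (A3) to (A3${}^*$). Your additional observation that $\FF_e=H^1_0(D)$ on the bounded domain, so that $T_ku\in\FF_e$ reads $T_ku\in H^1_0(D)$, is a point the paper leaves implicit but is handled correctly.
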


\subsection{Gradient perturbations of nonlocal symmetric regular
forms on $\BR^d$} \label{sec6.2}

The following example of a  nonlocal nonsymmetric regular
Dirichlet form is borrowed from \cite{J1}.

Let $\psi:\BR^d\rightarrow\BR$ be a continuous negative definite
function, i.e $\psi(0)\ge0$ and $\xi\mapsto e^{-t\psi(\xi)}$ is
positive definite for $t\ge0$, and for $s\in\BR$ let $H^{\psi,s}$
denote the Hilbert space
\[
H^{\psi,s}=\{u\in L^2(\BR^d;dx):\|u\|_{\psi,s}<\infty\},
\]
where
\[
\|u\|^2_{\psi,s}= \int_{\BR^d}(1+\psi(\xi))^{s}|\hat
u(\xi)|^2\,d\xi
\]
and $\hat u(\xi)=(2\pi)^{-d/2}\int_{\BR^d}e^{-i\xi\cdot
x}u(x)\,dx$, $\xi\in\BR^d$. If $\psi(\xi)=|\xi|^2$ then
$H^{\psi,s}$ coincides with the usual fractional Sobolev space
$H^{s}$. Basic properties of the spaces $H^{\psi,s}$ are found in
\cite[Section 3.10]{J1}.

Given $\psi$ as above and
$b=(b_1,\dots,b_d):\BR^d\rightarrow\BR^d$ such that $b_i\in
C^1_b(\BR^d)$ for $i=1,\dots,d$  define forms $\Psi,\BB$ by
\[
\Psi(u,v)=\int_{\BR^d}\psi(\xi)\hat u(\xi)\overline{\hat
v(\xi)}\,d\xi,\quad u,v\in H^{\psi,1},
\]
\begin{equation}
\label{eq5.12} \BB(u,v)=\int_{\BR^d}\langle b,\nabla
u\rangle_{\BR^d}\,v\,dx,\quad u,v\in C_0^{\infty}(\BR^d).
\end{equation}
Consider the following assumptions on $\psi,b$:
\begin{enumerate}
\item[(a)] $1/\psi$ is locally integrable on $\BR^d$,
\item[(b)] Tthere exist $\alpha\in(1,2]$, $\kappa>0$, $R>0$ such that
$\psi(\xi)\ge\kappa|\xi|^{\alpha}$ if $|\xi|>R$,
\item[(c)]$b_i\in C^1_b(\BR^d)$ for $i=1,\dots,d$ and $\dyw\,b=0$.
\end{enumerate}
It is known (see, e.g., \cite[Example 1.4.1]{FOT}) that
$(\Psi,H^{\psi,1})$ is a symmetric regular Dirichlet form on
$L^2(\BR^d;dx)$. By \cite[Example 1.5.2]{FOT} it is transient iff
condition (a) is satisfied. By \cite[Corollary 4.7.35]{J1} there
exists $c>0$ (depending on $b$) such that $|\BB(u,v)|\le
c\|u\|_{H^{1/2}}\|v\|_{H^{1/2}}$. Hence, if (b) is satisfied then
$H^{\psi,1}\subset H^{1/2}$ and
\begin{equation}
\label{eq5.1} |\BB(u,v)|\le C\|u\|_{\psi,1}\|v\|_{\psi,1}
\end{equation}
for some $C>0$. Since $C^{\infty}_0(\BR^d)$ is dense in
$H^{\psi,1}$ (see \cite[Theorem 3.10.3]{J1}), it follows that
under (b) we may extend (\ref{eq5.12}) to a  continuous bilinear
form $(\BB,H^{\psi,1})$. If moreover (c) is satisfied, then by
integration by parts,
\begin{equation}
\label{eq5.2} \BB(u,u)=-\frac12\int_{\BR^d}u^2\dyw\,b\,dx=0
\end{equation}
for $u\in C^{\infty}_0(\BR^d)$ and hence for all $u\in
H^{\psi,1}$. Using  integration by parts one can also check (see
\cite[Example 4.7.36]{J1}) that if $\dyw\,b=0$ then
$(\BB,H^{\psi,1})$ has the contraction properties required in the
definition of a Dirichlet form and hence is a Dirichlet form.
Finally, let us consider the form
\begin{equation}
\label{eq5.10} \EE(u,v)=\Psi(u,v)+\BB(u,v),\quad u,v\in
H^{\psi,1}.
\end{equation}
From the properties of $\Psi,\BB$ mentioned above it follows that
if (a)--(c) are satisfied then $(\EE,H^{\psi,1})$ is a regular
transient Dirichlet form on $L^2(\BR^d;dx)$ and the extended
Dirichlet space for $\EE$ coincides with the extended Dirichlet
space  for $\Psi$, which we denote here by $H^{\psi,1}_e$. The
space $H^{\psi,1}_e$ can be characterized for $\psi$ of the form
$\psi(\xi)=c|\xi|^{\alpha}$ for some $\alpha\in(0,2]$, $c>0$ (see
\cite[Example 1.5.2]{FOT} or \cite[Example 3.5.55]{J2}). That
characterization shows that if $\psi$ satisfies (b) and $\alpha<d$
(i.e. (a) is satisfied) then $H^{\psi,1}_e\hookrightarrow
L^p(\BR^d)$ with $p=2d/(d-\alpha)$ and $\|u\|_{L^p(\BR^d;dx)}\le
C\Psi(u,u)^{1/2}$ for $u\in H^{\psi,1}_e$ (see \cite[Corollary
3.5.60]{J2}).

The operator associated with $\Psi$ is a pseudodifferential
operator $\psi(\nabla)$ which for $u\in C^{\infty}_0(\BR^d)$ has
the form
\[
\psi(\nabla)u(x)=(2\pi)^{-d/2}\int_{\BR^d}e^{i(x,\xi)}\psi(\xi)\hat
u(\xi)\,d\xi,\quad x\in \BR^d.
\]
\begin{proposition}
\label{prop5.5} Assume that \mbox{\rm(A1)--(A4)} and
\mbox{\rm(a)--(c)} hold. Then there exists a unique probabilistic
solution of the equation
\[
-\psi(\nabla)u-(b,\nabla u)=f_u+\mu.
\]
Moreover, $f_u\in L^1(\BR^d;dx)$, $T_ku\in H^{\psi,1}_e$ for every
$k>0$, and \mbox{\rm(\ref{eq3.5}), (\ref{eq4.2})} are satisfied.
\end{proposition}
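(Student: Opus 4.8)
The plan is to show that this example fits directly into the framework of Corollary \ref{cor4.3}, so that the proposition follows immediately once we verify that $(\EE,H^{\psi,1})$ is a quasi-regular transient Dirichlet form on $L^2(\BR^d;dx)$ and that the operator associated with it via (\ref{eq1.2}) is exactly $\psi(\nabla)u+(b,\nabla u)$. The first point is already assembled from the discussion preceding the statement: under (a)--(c) the form $\Psi$ is a symmetric regular transient Dirichlet form (transience being equivalent to (a)), the perturbation $\BB$ is a well-defined continuous bilinear form on $H^{\psi,1}$ by (\ref{eq5.1}) satisfying the strong sector condition, $\BB(u,u)=0$ by (\ref{eq5.2}) so $\EE_1$ is comparable to $\Psi_1$ on $H^{\psi,1}$, and the Dirichlet (contraction) properties of $\BB$ under $\dyw b=0$ make $(\EE,H^{\psi,1})$ a regular Dirichlet form with the same domain and the same extended Dirichlet space $H^{\psi,1}_e$ as $\Psi$. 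Regular forms are quasi-regular, and transience of $\EE$ depends only on its symmetric part $\Psi$, which is transient under (a); so $(\EE,H^{\psi,1})$ is a quasi-regular transient Dirichlet form.

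Next I would identify the operator. For $u\in C^\infty_0(\BR^d)$ and $v\in H^{\psi,1}$, by Plancherel and (\ref{eq5.12}) one has $\EE(u,v)=\Psi(u,v)+\BB(u,v)=(\psi(\nabla)u,v)+((b,\nabla u),v)=(-Lu,v)$ with $Lu=-\psi(\nabla)u-(b,\nabla u)$; since $C^\infty_0(\BR^d)$ is a core and $\psi(\nabla)u,(b,\nabla u)\in L^2$, the uniqueness characterization (\ref{eq1.2}) identifies $L$ on this core, which is all that is needed to phrase the equation as $-Lu=f_u+\mu$ with $L$ the generator of $\EE$ in the sense of (\ref{eq1.2}). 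Then I would invoke Corollary \ref{cor4.3}: under (A1), (A2), (A3*), (A4) — and here the hypotheses are stated as (A1)--(A4), which by the Remark following Proposition \ref{prop2.2} imply (A3*) and (A4*) in the transient case — there is a unique probabilistic solution $u$ of (\ref{eq3.1}), it is of class (FD), $u\in\FD^q$ for $q\in(0,1)$, and (\ref{eq3.5}), (\ref{eq4.2}) hold. Theorem \ref{th4.1} gives $T_ku\in\FF_e=H^{\psi,1}_e$ for every $k>0$ together with the energy bound (\ref{eq4.2}), and (\ref{eq3.5}) gives $f_u\in L^1(\BR^d;dx)$ with the stated norm estimate. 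Uniqueness in the class of probabilistic solutions is part of Corollary \ref{cor4.3}.

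The only genuinely non-routine point is the identification of the extended Dirichlet space and of the operator — i.e. checking that $\FF_e$ for $\EE$ really is $H^{\psi,1}_e$ and that $\psi(\nabla)$ enters as claimed — but this has essentially been carried out in the paragraphs preceding the proposition (using (\ref{eq5.1}), (\ref{eq5.2}), \cite[Theorem 3.10.3]{J1}, \cite[Example 4.7.36]{J1}, and \cite[Corollary 3.5.60]{J2}), so the proof reduces to citing those facts and then applying Corollary \ref{cor4.3}. I would therefore write the proof as: "From the discussion preceding the proposition, $(\EE,H^{\psi,1})$ is a quasi-regular transient Dirichlet form on $L^2(\BR^d;dx)$, $\FF_e=H^{\psi,1}_e$, and the operator $L$ determined by (\ref{eq1.2}) acts on $C^\infty_0(\BR^d)$ as $Lu=-\psi(\nabla)u-(b,\nabla u)$. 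Moreover, under (A1)--(A4), conditions (A3*), (A4*) hold by the Remark following Proposition \ref{prop2.2}. Hence the conclusion follows from Corollary \ref{cor4.3} and Theorem \ref{th4.1}."
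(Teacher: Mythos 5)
Your proposal is correct and follows essentially the same route as the paper, which obtains Proposition \ref{prop5.5} directly from the discussion of the form $(\EE,H^{\psi,1})$ preceding the statement together with Corollary \ref{cor4.3} (and Theorem \ref{th4.1} for the energy estimate). The only small caveat is that your assertion that $\BB$ satisfies the \emph{strong} sector condition is neither justified by (\ref{eq5.1}) — which controls $\BB$ by the $\|\cdot\|_{\psi,1}$-norms and hence yields only the weak sector condition, since $\EE(u,u)=\Psi(u,u)$ need not dominate the $L^2$-norm on $\BR^d$ — nor needed, because Corollary \ref{cor4.3} and Theorem \ref{th4.1} do not require it.
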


Proposition \ref{prop5.5}  holds for operators corresponding to
(\ref{eq5.10}) with $\Psi$ replaced by an arbitrary symmetric
regular Dirichlet form with domain $H^{\psi,1}$. For examples of
such forms  see Examples 4.7.30 and 4.7.31 in \cite{J1} and Remark
2.6.8 and Theorem 2.6.10 in \cite{J2}.

\subsection{Nonlocal symmetric forms on $D\subset\BR^d$}
\label{sec6.4}

Let $\psi$ be a continuous negative definite function satisfying
conditions (a), (b) of Subsection \ref{sec6.2}, and let
$D\subset\BR^d$ be a nearly Borel measurable set finely open with
respect to the process associated with the form $\Psi$. Set
$L^2_D(\BR^d;dx)=\{u\in L^2(\BR^d;dx):u=0$ a.e. on $D^c$\} and
\[
H^{\psi,1}_D=\{u\in H^{\psi,1}:\tilde u=0\mbox{ q.e. on }D^c\}.
\]
Then by  \cite[Theorem 3.3.8]{CF}, $(\Psi,H^{\psi,1}_D)$ is a
quasi-regular Dirichlet form on $L^2_D(\BR^d;dx)$. If $\alpha<d$
then it is transient by \cite[Theorem 4.4.4]{FOT}. In case $\Psi$
is transient, we denote its extended Dirichlet space by
$H^{\psi,1}_{D,e}$. The above remarks and Corollary \ref{cor4.3}
lead to the following proposition.

\begin{proposition}
\label{prop5.6} Let assumptions of Proposition \ref{prop5.5} hold
and let $D$ be a nearly Borel finely open subset of $\BR^d$ with
$d>\alpha$. Then there exists a unique probabilistic solution of
the problem
\begin{equation}
\label{eq5.11} -\psi(\nabla)u=f_u+\mu\quad\mbox{in }D,\quad
u=0\quad\mbox{in }D^c.
\end{equation}
Moreover, $f_u\in L^1(D;dx)$, $T_ku\in H^{\psi,1}_{D,e}$ for every
$k>0$ and \mbox{\rm(\ref{eq3.5}), (\ref{eq4.2})} hold true.
\end{proposition}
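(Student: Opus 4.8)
The plan is to verify that the abstract setup of Corollary~\ref{cor4.3} applies to the concrete form $(\Psi,H^{\psi,1}_D)$ on $L^2_D(\BR^d;dx)$, and then simply read off the conclusion. First I would check the three structural hypotheses needed: that $(\Psi,H^{\psi,1}_D)$ is a quasi-regular Dirichlet form (this is \cite[Theorem 3.3.8]{CF}, cited just above the statement), that it is transient (this follows from $\alpha<d$ via \cite[Theorem 4.4.4]{FOT}, after noting condition (b) forces $H^{\psi,1}\hookrightarrow H^{\alpha/2}$ so transience of the whole form reduces to the classical transience criterion for $|\xi|^\alpha$-type forms exactly as recalled in Subsection~\ref{sec6.2}), and that the operator associated with this form in the sense of \eqref{eq1.2} is precisely the Dirichlet-type restriction of $\psi(\nabla)$ to $D$, so that \eqref{eq5.11} is literally an instance of \eqref{eq3.1}.

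Next I would invoke Corollary~\ref{cor4.3} directly. Assumptions (A1), (A2), (A3${}^*$), (A4) transfer verbatim — (A1) and (A2) are conditions on $f$ alone and are assumed; (A4) requires $f(\cdot,0)\in L^1(D;dx)$ and $\mu\in\MM_{0,b}$, which is part of (A1)--(A4); and (A3${}^*$) follows from (A3) because the form is transient (the Remark following Proposition~\ref{prop2.2}). Corollary~\ref{cor4.3} then yields existence and uniqueness of the probabilistic solution $u$, its membership in class (FD) and in $\FD^q$ for $q\in(0,1)$, together with the $L^1$-bound \eqref{eq3.5} on $f_u$ and the truncation estimate \eqref{eq4.2}. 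The only remaining point is to identify the extended Dirichlet space: writing $H^{\psi,1}_{D,e}$ for the extended Dirichlet space of $(\Psi,H^{\psi,1}_D)$, the assertion $T_ku\in H^{\psi,1}_{D,e}$ is exactly the statement $T_ku\in\FF_e$ from Theorem~\ref{th4.1} once one observes that the extended space of the perturbed/restricted form coincides with $H^{\psi,1}_{D,e}$ — but here there is no non-symmetric perturbation at all, so this identification is immediate.

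I do not expect a serious obstacle; the proof is an application of the general machinery. The one place requiring a line of care is the transience of $(\Psi,H^{\psi,1}_D)$: transience of a Dirichlet form depends only on its symmetric part and is inherited by Dirichlet subspaces, so it suffices that the ambient form $(\Psi,H^{\psi,1})$ on $L^2(\BR^d;dx)$ be transient, which under (a) (equivalently $\alpha<d$ together with (b)) is classical. The proof therefore reads: ``Apply Corollary~\ref{cor4.3} to the quasi-regular transient Dirichlet form $(\Psi,H^{\psi,1}_D)$, noting that (A3${}^*$) follows from (A3) by transience and that $\FF_e = H^{\psi,1}_{D,e}$ in this case.''

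\begin{proof}
By \cite[Theorem 3.3.8]{CF} the form $(\Psi,H^{\psi,1}_D)$ is a quasi-regular Dirichlet form on $L^2_D(\BR^d;dx)$. Transience of a Dirichlet form depends only on its symmetric part and passes to Dirichlet subspaces, so transience of $(\Psi,H^{\psi,1}_D)$ follows from that of $(\Psi,H^{\psi,1})$ on $L^2(\BR^d;dx)$, which holds since $d>\alpha$ (see \cite[Theorem 4.4.4]{FOT}). The operator associated with $(\Psi,H^{\psi,1}_D)$ in the sense of \eqref{eq1.2} is the Dirichlet restriction of $\psi(\nabla)$ to $D$, so \eqref{eq5.11} is an instance of \eqref{eq3.1}. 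Under (A1)--(A4), assumption (A3${}^*$) follows from (A3) by the Remark following Proposition~\ref{prop2.2}. Hence Corollary~\ref{cor4.3} applies and yields a unique probabilistic solution $u$ of \eqref{eq5.11}, which is of class (FD), satisfies $u\in\FD^q$ for $q\in(0,1)$, and for which $f_u\in L^1(D;dx)$, $T_ku\in\FF_e=H^{\psi,1}_{D,e}$ for every $k>0$, and \eqref{eq3.5} and \eqref{eq4.2} hold.
\end{proof}
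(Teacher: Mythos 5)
Your proposal is correct and follows essentially the same route as the paper: quasi-regularity of $(\Psi,H^{\psi,1}_D)$ from \cite[Theorem 3.3.8]{CF}, transience of the part form from $\alpha<d$ via \cite[Theorem 4.4.4]{FOT}, and then a direct application of Corollary~\ref{cor4.3}. The extra observations you include (the identification $\FF_e=H^{\psi,1}_{D,e}$ and the passage from (A3) to (A3${}^*$) via transience) are exactly the implicit steps the paper relies on, so there is nothing to add.
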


Let us remark that if $D$ is bounded then
$H^{\psi,1}_{D,e}=H^{\psi,1}_D$, because
$H^{\psi,1}_D\hookrightarrow L^2_D(\BR^d;dx)$ in that case. If $D$
is open and has smooth boundary then as in \cite{JM} we may define
the space $H^{\psi,1}_0(D)$ as follows. Given $u\in
C^{\infty}_0(D)$ we extend it to $\BR^d$ by setting $u=0$ on
$D^c$. We then obtain a function $u\in C^{\infty}_0(\BR^d)$ with
support in $D$. Consequently, we may regard $C^{\infty}_0(D)$ as a
subspace of $H^{\psi,1}$ and therefore define $H^{\psi,1}_0(D)$ as
the closure of $C^{\infty}_0(D)$ in $H^{\psi,1}$. By \cite[Theorem
4.4.3]{FOT}, $C^{\infty}_0(D)$ is a special standard core of
$(\Psi,H^{\psi,1}_D)$, and hence, by \cite[Lemma 2.3.4]{FOT},
$H^{\psi,1}_D=H^{\psi,1}_0(D)$.

Assume that $d\ge3$ and $D\subset\BR^d$ is a bounded open set with
a $C^{1,1}$ boundary. Let us consider the form
$(\Psi,H^{\psi,1}_D)$ with $\psi(\xi)=c|\xi|^{\alpha}$ for some
$c>0, \alpha\in(0,2]$. By \cite[Proposition 4.9]{Ku} there exist
constants $0<c_1<c_2$ depending only on $d,\alpha,D$ such that
\[
c_1\delta^{\alpha/2}(x)\le R1(x)\le c_2\delta^{\alpha/2}(x),\quad
x\in D,
\]
where $\delta(x)=\mbox{dist}(x,\partial D)$. From this, Theorem
\ref{th3.2} and Remark \ref{rem4.5} it follows that if $f$
satisfies (A1), (A2), (A3${}^*$) and $f(\cdot,0)\in
L^1(D;\delta^{\alpha/2}(x)\,dx)$,
$\int_D\delta^{\alpha/2}(x)|\mu|(dx)<\infty$ then the
probabilistic solution $u$ of (\ref{eq3.1}) belongs to
$L^1(D;dx)$.

\subsection{Dirichlet forms on infinite dimensional state space}
\label{sec6.3}

Let $H$ be a separable real Hilbert space and let $A,Q$ be linear
operators on $H$. Assume that
\begin{enumerate}
\item[(a)]$A:D(A)\subset H\rightarrow H$ generates a strongly
continuous semigroup $\{e^{tA}\}$ in $H$ such that $\|e^{tA}\|\le
Me^{-\omega t}$, $t\ge0$, for some $M>0$, $\omega>0$,
\item[(b)]$Q$ is bounded, $Q=Q^*>0$  and
$\sup_{t>0}\tr\,Q_t<\infty$, where
$Q_t=\int^t_0e^{sA}Qe^{sA^*}\,ds$,
\item[(c)]$Q_{\infty}(H)\subset D(A)$, where
$Q_{\infty}=\int^{\infty}_0e^{tA}Qe^{tA^*}\,dt$.
\end{enumerate}

A simple and important example of $A,Q$ satisfying (a)--(c) is
$Q=I$ and a self-adjoint operator $A$ such that $\langle
Ax,x\rangle_H\le-\omega|x|_H^2$, $x\in D(A)$, for some $\omega>0$
and $A^{-1}$ is of trace class. In this example
$Q_{\infty}=-\frac12 A^{-1}$. Other examples are  found for
instance in \cite{F}.

By (a) the operators $Q_t$, $Q_{\infty}$ are well defined, and by
(b), $Q_{\infty}$ is of trace class. Let $\gamma$ denote the
Gaussian measure on $H$ with mean 0 and covariance operator
$Q_{\infty}$. We consider the form
\begin{equation}
\label{eq5.3} \EE(u,v)=-\int_H\langle\nabla u,AQ_{\infty}\nabla
v\rangle_H\,d\gamma, \quad u,v\in\FF C^{\infty}_b.
\end{equation}
Here $\FF C^{\infty}_b$ is the space of finitely based smooth
bounded functions, i.e.
\[
\FF C^{\infty}_b=\{u:H\rightarrow\BR: u(x)=f(\langle x,e_1\rangle,
\dots,\langle x,e_m\rangle),m\in\BN,f\in C^{\infty}_b(\BR^m)\}
\]
for some orthonormal basis $\{e_k\}$ of $H$ consisting of
eigenvectors of $Q_{\infty}$, and $\nabla$ is the $H$-gradient
defined for $u\in\FF C^{\infty}_b$ as the unique element of $H$
such that $\langle\nabla u(x),h\rangle_H=\frac{\partial
u}{\partial h}(x)$ for $x\in H$ (the last derivative is the
Gateaux derivative in the direction $h$, i.e. $\frac{\partial
u}{\partial h}(x)=\frac{d}{ds}u(x+sh)|_{s=0}$). Under (a)--(c) the
form $(\EE,\FF C^{\infty}_b)$ is closable and its closure, which
will be denoted by  $(\EE,W^{1,2}_Q(H))$, is a coercive closed
form on $L^2(H;\gamma)$ (see Theorem 2.2, Remark 2.3 and Lemma 3.3
in \cite{F}).
Using the product rule for $\nabla$ on $\FF C^{\infty}_b$ one can
check in the same way as in \cite[Section II.2(d)]{MR} (see also
\cite[Section II.3(e)]{MR}) that it has the Dirichlet property.
Finally, by results of \cite[Section IV.4]{MR}, it is
quasi-regular.

By \cite[Theorem 3.6]{F} the semigroup $\{P_t\}$ on
$L^2(H;\gamma)$ associated with $(\EE,W^{1,2}_Q(H))$ is the
Ornstein-Uhlenbeck semigroup of the form
\[
P_tf(x)=\int_Hf(y)\NN(e^{tA}x,Q_t)\,(dy), \quad x\in H,
\]
where $\NN(e^{tA}x,Q_t)$ is the gaussian measure in $H$ with mean
$e^{tA}x$ and covariance operator $Q_t$. Note that $\{P_t\}$ is
analytic. Actually, analyticity of $\{P_t\}$ is equivalent to the
fact that it corresponds to some nonsymmetric Dirichlet form (see
\cite{G} and also \cite{GN} for related results in a more general
setting). The generator of $\{P_t\}$ has the form
\[
Lu(x)=\frac12\tr({Q\Delta u(x)})+\langle x,A^{*}\nabla
u(x)\rangle_H,\quad x\in H.
\]

Since for every $\lambda>0$ the form
$(\EE_{\lambda},W^{1,2}_Q(D))$ is transient, from the above
remarks and Corollary \ref{cor4.3} we obtain the following
proposition.
\begin{proposition}
\label{prop5.4} Assume that \mbox{\rm(A1)--(A4)} and
\mbox{\rm(a)--(c)} hold. Then for every $\lambda>0$ there exists a
unique probabilistic solution to the equation
\[
-Lu+\lambda u=f_u+\mu.
\]
Moreover, $f_u\in L^1(H;\gamma)$, $T_ku\in W^{1,2}_Q(D)$ for every
$k>0$ and \mbox{\rm(\ref{eq3.5}), (\ref{eq4.2})} hold true.
\end{proposition}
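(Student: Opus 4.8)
The plan is to reduce the statement to Corollary~\ref{cor4.3}, applied to the shifted form
\[
\EE_{\lambda}(u,v)=\EE(u,v)+\lambda(u,v),\qquad u,v\in W^{1,2}_Q(H),
\]
on $L^2(H;\gamma)$. Since by (\ref{eq1.2}) the operator associated with $\EE_{\lambda}$ is $L-\lambda$ (with $L$ the generator of the Ornstein-Uhlenbeck semigroup $\{P_t\}$), the equation $-Lu+\lambda u=f_u+\mu$ is precisely equation (\ref{eq3.1}) for $\EE_{\lambda}$. First I would check that $(\EE_{\lambda},W^{1,2}_Q(H))$ is again a quasi-regular Dirichlet form on $L^2(H;\gamma)$: adding the bounded non-negative symmetric form $\lambda(\cdot,\cdot)$ changes neither the domain nor, up to equivalence, the form norm, hence preserves the contraction (Dirichlet) property, the weak sector condition and quasi-regularity, and leaves the nests, the $\EE$-exceptional sets, the notion of $\EE$-quasi-continuity, the class $S$ of smooth measures and its subclass $\MM_{0,b}$ unchanged. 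In particular (A4) relative to $\EE$ coincides with (A4) relative to $\EE_{\lambda}$.

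The one genuinely analytic point is the transience of $\EE_{\lambda}$; note that $\EE$ itself is not transient, since $\{P_t\}$ has the finite invariant measure $\gamma$. Because $\gamma$ is a probability measure and $\tilde\EE_{\lambda}(u,u)=\tilde\EE(u,u)+\lambda(u,u)$, the Cauchy-Schwarz inequality gives, for every $u\in W^{1,2}_Q(H)$,
\[
\int_H|u|\cdot\lambda^{1/2}\,d\gamma\le\lambda^{1/2}(u,u)^{1/2}\le\EE_{\lambda}(u,u)^{1/2},
\]
so (\ref{eq2.3}) holds for $\EE_{\lambda}$ with $g\equiv\lambda^{1/2}$, which is strictly positive, bounded and $\gamma$-integrable; hence $(\EE_{\lambda},W^{1,2}_Q(H))$ is transient. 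Moreover the coercivity bound $\tilde\EE_{\lambda}(u,u)\ge\lambda(u,u)$ shows that every $\tilde\EE_{\lambda}$-Cauchy sequence is $L^2(H;\gamma)$-Cauchy, hence converges in $W^{1,2}_Q(H)$; therefore the extended Dirichlet space $\FF_e$ of $\EE_{\lambda}$ coincides with $W^{1,2}_Q(H)$, which is why the regularity conclusion will read $T_ku\in W^{1,2}_Q(H)$.

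It then remains to apply Corollary~\ref{cor4.3}. Conditions (A1), (A2) concern $f$ alone and are assumed; (A4) relative to $\EE_{\lambda}$ is the assumed (A4); and since $\EE_{\lambda}$ is transient, the remark following Proposition~\ref{prop2.2} gives that (A3) implies (A3${}^*$). Thus $f,\mu$ satisfy (A1), (A2), (A3${}^*$), (A4) relative to the quasi-regular transient Dirichlet form $\EE_{\lambda}$, and Corollary~\ref{cor4.3} yields a unique probabilistic solution $u$ of $-Lu+\lambda u=f_u+\mu$ which is of class (FD), belongs to $\FD^q$ for $q\in(0,1)$, satisfies $f_u\in L^1(H;\gamma)$ as well as the estimates (\ref{eq3.5}) and (\ref{eq4.2}), and for which $T_ku\in\FF_e=W^{1,2}_Q(H)$ for every $k>0$.

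Apart from this the proof is routine bookkeeping. The only point requiring genuine (though modest) care is the transience of $\EE_{\lambda}$ together with the identification $\FF_e=W^{1,2}_Q(H)$, both of which hinge on the coercivity estimate $\tilde\EE_{\lambda}(u,u)\ge\lambda(u,u)$ and on the finiteness of the Gaussian measure $\gamma$. If in addition one wants the description of $u$ as a solution in the sense of duality, one would further observe that $\EE_{\lambda}$ satisfies the strong sector condition — which follows from the weak sector condition for $\EE$ and the inequality $\EE_1(u,u)\le\max\{1,\lambda^{-1}\}\,\tilde\EE_{\lambda}(u,u)$ — and then invoke Proposition~\ref{prop3.5}(ii).
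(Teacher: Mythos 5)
Your proposal is correct and follows essentially the same route as the paper, which derives Proposition \ref{prop5.4} in one line by observing that $(\EE_\lambda,W^{1,2}_Q(H))$ is a transient quasi-regular Dirichlet form and invoking Corollary \ref{cor4.3}. You merely supply the details the paper leaves implicit (transience via $\tilde\EE_\lambda(u,u)\ge\lambda(u,u)$ and $\gamma(H)=1$, the identification $\FF_e=W^{1,2}_Q(H)$, and the transfer of (A3) to (A3${}^*$) via the remark after Proposition \ref{prop2.2}), and these details are all accurate.
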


For generalizations of forms (\ref{eq5.3})  to operators $Q$
depending on $x$ or more general measures on topological vector
spaces than gaussian measures on Hilbert spaces we refer the
reader to \cite[Section II.3]{MR}, \cite{G,R} and  the references
therein).

\subsection{Additional remarks on Dirichlet forms}

In this subsection we briefly outline how general results on
transformation of Dirichlet forms can by applied to obtain other
interesting examples of quasi-regular forms.
\medskip\\
(i) (Perturbation of Dirichlet forms) Let $(\EE,D(\EE))$ be a
quasi-Dirichlet form and let $\nu\in S$. Set
\[
\EE^{\nu}(u,v)=\EE(u,v)+\int_E\tilde u\tilde v\,d\nu,\quad u,v\in
D(\EE^{\nu}),
\]
where $D(\EE^{\nu})=D(\EE)\cap L^2(E;\nu)$. By \cite[Proposition
2.3]{RS}, $(\EE^{\nu},D(\EE^{\nu}))$ is a quasi-regular Dirichlet
form on $L^2(E;m)$. In our context an important example of $\nu$
is $\nu(dx)=V(x)\,m(dx)$ for some nonnegative $V\in L^1(E;m)\cap
L^{\infty}(E;m)$. In this case $D(\EE^V)\equiv
D(\EE^{\nu})=D(\EE)$. Moreover, $(\EE^V,D(\EE^V))$ satisfies the
strong sector condition if $(\EE,D(\EE))$ does, and from
(\ref{eq2.3}) it follows immediately that $(\EE^V,D(\EE^V))$ is
transient if $(\EE,D(\EE))$ is transient or $V$ is $m$-a.e.
strictly positive. Therefore Propositions \ref{prop5.2} and
\ref{prop5.4} hold true for operators $L$ replaced by $L-V$ (In
Proposition \ref{prop5.4} we can take $\lambda\ge0$ if $V$ is
$m\equiv\mu$-a.e. strictly positive), and Proposition
\ref{prop5.6} holds for $\psi(\nabla)$ replaced by
$\psi(\nabla)-V$. Note that the perturbed regular form may become
non-regular. For instance, in \cite[Section II.2(e)]{MR} one can
find an  example of $V$ such that the classical form
$(\BD,H^1(\BR^d))$ (see (\ref{eq3.03})) perturbed by $V$ is not
regular.
\medskip\\
(ii) (Superposition of closed forms) For $k=1,2$ let
$(\EE^{(k)},D^{(k)})$ be a closable symmetric bilinear form on
$L^2(E;m)$. Set
\[
\EE(u,v)=\EE^{(1)}(u,v)+\EE^{(2)}(u,v),\quad u,v\in D,
\]
where $D=\{u\in D^{(1)}\cap D^{(2)}:\EE^{(1)}(u,u)+\EE^{(2)}(u,u)
<\infty\}$. By \cite[Proposition I.3.7]{MR} the form $(\EE,D)$ is
closable on $L^2(E;m)$. We may use this property and examples
considered in Section \ref{sec6.1}--\ref{sec6.3} to construct new
quasi-regular Dirichlet forms. To illustrate how this work in
practice,  following \cite[Remark II.3.16]{MR} we consider the
form $(\EE,\FF C^{\infty}_b)$ of Section \ref{sec6.1} and a
symmetric finite positive measure on $(H\times
H,\BB(H)\otimes\BB(H))$ such that the form
\[
J(u,v)=\int_H\int_H(u(x)-u(y))(v(x)-v(y))\,J(dx\,dy),\quad u,v\in
\FF C^{\infty}_b,
\]
is closable. Then the form $(\EE+J,\FF C^{\infty}_b)$ is closable
and its closure is a symmetric quasi-regular Dirichlet form. Thus
we have constructed an infinite-dimensional (and so non-regular)
Dirichlet form which is nonlocal. For the operator corresponding
to that form one can formulate an analogue of Proposition
\ref{prop5.4}.

General results on superposition of closed form are  found in
\cite[Section 3.1]{FOT} and \cite[Proposition I.3.7]{MR}.
\medskip\\
(iii) (Parts of forms) Let $(\EE,D(\EE))$ be a symmetric regular
Dirichlet form on $L^2(E;m)$, and  $D\subset E$ be a nearly Borel
measurable finely open set with respect to the process $\BX$
associated with $(\EE,D(\EE))$. Set $L^2_D(E;m)=\{u\in
L^2(E;m):u=0$ $m$-a.e. on $D^c$\} and $\FF_D=\{u\in D(\EE):\tilde
u=0$ q.e. on $D^c$\}. By \cite[Theorem 3.3.8]{CF} the form
$(\EE,\FF_D)$ on $L^2_D(E;m)$, called the part of $(\EE,D(\EE))$
on $D$, is a quasi-regular Dirichlet form (if $D$ is open then it
is regular). We can use this result to get solutions of Dirichlet
problems of the form (\ref{eq5.11}) with $\psi(\nabla)$ replaced
by arbitrary operator  associated with a symmetric regular
Dirichlet form.

\subsection{Semi-Dirichlet forms}

\paragraph{Diffusion operator with drift.} Let $D\subset\BR^d$, $d\ge3$,
be a bounded domain and let $a_{ij}, b_i:D\rightarrow\BR$ be
measurable functions such that $b_i$ is bounded, $a_{ij}=a_{ji}$
and
\[
\lambda^{-1} |\xi|^{2} \le \sum^d_{i,j=1}a_{ij}\xi_{i}\xi_{j}\le
\lambda|\xi|^{2}, \quad \xi=(\xi_1,\dots,\xi_d)\in\BR^d,
\]
for some $\lambda\ge1$. Consider the form $(\EE,C^{\infty}_0(D))$
defined by (\ref{eq6.9}) with $c=0$, $d=0$.
By Theorems 1.5.2 and 1.5.3 in \cite{O} its smallest closed
extension $(\EE,H^1_0(D))$  is a regular lower-bounded
semi-Dirichlet form on $L^2(D;dx)$.
Therefore, if (A1), (A2), (A3${}^*$), (A4${}^*$) are satisfied,
then there exists a unique probabilistic solution of (\ref{eq1.1})
with $L$ defined by (\ref{eq6.11}) with $c=0$, $d=0$.

Let $G_D$ denote the Green function for $L$ on $D$ and let $\hat
G_D$ denote the Green function on $D$ for the adjoint operator to
$L$, i.e. operator associated with the form $(\hat\EE,H^1_0(D))$.
It is known that $G_D(x,y)=\hat G_D(y,x)$ and $G_D(x,y)\le
c|x-y|^{-(d-2)}$ for $x,y\in D$ such that $x\neq y$ (see, e.g.,
\cite[Section 4.2]{P}). Therefore,
\[
\hat G1(x)=\int_D\hat G_D(x,y)\,dy=\int_DG_D(y,x)\,dy \le c
\int_D|x-y|^{-d+2}\,dy,
\]
and hence
\[
\hat G1(x)\le c\int_{B(x,\mbox{\tiny
diam}(D))}|x-y|^{-d+2}\,dy=c_1(\mbox{diam}(D))^2.
\]
Accordingly, $\EE$ satisfies condition $(\Delta)$ with $\eta_n=1$
and $F_n=D$. From this and Remark \ref{rem5.6} it follows that
(A3) implies (A3${}^*$) and (A4) implies (A4${}^*$).

\paragraph{Fractional laplacian with variable exponent.}

Let $\alpha:\BR^d\rightarrow\BR$ be a measurable function such
that $\alpha_1\le\alpha(x)\le\alpha_2$, $x\in\BR^d$, for some
constants $0<\alpha_{1}\le\alpha_{2}<2$.  Let
$L_{t}=L=\Delta^{\alpha(x)}$, i.e. $L$ is a pseudodifferential
operator such that
\begin{equation}
\label{eq6.10} Lu(x)=(2\pi)^{-d/2}\int_{\BR^d}
e^{ix\xi}|\xi|^{\alpha(x)}\hat{u}(\xi)\,d\xi,\quad u\in
C^{\infty}_c(\BR^d).
\end{equation}
If $\int_0^1(\beta(r)|\log r|)^2r^{-(1+\alpha_2)}\,dr<\infty$,
where $\beta(r)=\sup_{|x-y|\le r}|\alpha(x)-\alpha(y)|$, then $L$
is associated with some regular semi-Dirichlet form $\EE$ on
$L^2(\BR^d;dx)$ (see \cite[Example 5.13]{K:JFA} for details).
Therefore under the above assumptions on $\alpha$ and (A1), (A2),
(A3${}^*$), (A4${}^*$) there exists a unique probabilistic
solution of (\ref{eq1.1}) with $L$ defined by (\ref{eq6.10}).
\medskip\\
{\bf Acknowledgments}
\medskip\\
The first author was supported by Polish NCN grant no.
2012/07/D/ST1/02107.

\end{document}